\documentclass{article}
\usepackage[utf8]{inputenc}
\usepackage{amsthm}
\usepackage{amsmath}
\usepackage{comment}
\usepackage{amsfonts}
\usepackage{color}
\usepackage{latexsym}
\usepackage{geometry}
\usepackage{enumerate}
\usepackage[shortlabels]{enumitem}
\usepackage{csquotes}
\usepackage{graphicx}
\usepackage{comment}
\usepackage{appendix}
\usepackage{hyperref}
\usepackage{esint}
\usepackage{bm}
\usepackage{amssymb}

\hypersetup{
    colorlinks=true,
    linkcolor=blue,
    filecolor=magenta,      
    urlcolor=cyan,
}

\emergencystretch=\maxdimen
\def \bF {\mathbb{F}}
\def \bN {\mathbb{N}}
\def \bR {\mathbb{R}}

\def \fC {\mathfrak{C}}
\def \ft {\mathfrak{t}}
\def \fg {\mathfrak{g}}

\def \cA {\mathcal{A}}
\def \cC {\mathcal{C}}
\def \cS {\mathcal{S}}

\def \cR {\mathcal{R}}
\def \cX {\mathcal{X}}
\def \cN {\mathcal{N}}
\def \cW {\mathcal{W}}

\def \lt {\left}
\def \rt {\right}
\def \ep {\varepsilon}
\def \dt {\delta}

\DeclareMathOperator{\Rm}{Rm}
\DeclareMathOperator{\Ric}{Ric}

\DeclareMathOperator{\Var}{Var}
\DeclareMathOperator{\AVR}{AVR}
\DeclareMathOperator{\loc}{loc}
\DeclareMathOperator{\supp}{supp}

\makeatletter
\newcommand*{\rom}[1]{\rm {\expandafter\@slowromancap\romannumeral #1@}}
\makeatother

\def\XXint#1#2#3{{\setbox0=\hbox{$#1{#2#3}{\int}$ }
\vcenter{\hbox{$#2#3$ }}\kern-.6\wd0}}

\protected\def\vts{%
  \ifmmode
    \mskip0.5\thinmuskip
  \else
    \ifhmode
      \kern0.08334em
    \fi
  \fi
}

\numberwithin{equation}{section}
\newtheorem{Theorem}{Theorem}[section]
\newtheorem{Proposition}[Theorem]{Proposition}
\newtheorem{Lemma}[Theorem]{Lemma}

\theoremstyle{definition}

\newtheorem{Remark}[Theorem]{Remark}

\title{A gap theorem for metric solitons and its applications}
\author{Ganqi Wang, Yongjia Zhang\footnote{Yongjia Zhang's research is  supported by National Natural Science Foundation of China NSFC12301076.}}
\date{}

\begin{document}
\maketitle
\begin{abstract}
In this paper, we prove a gap theorem for $\bF$-limit metric solitons with respect to the asymptotic volume ratio (AVR): if the AVR of a metric soliton is sufficiently close to 1, then the metric soliton is Euclidean; this is a metric-soliton counterpart of \cite{WW25}. Our result can be applied to Ricci flows to derive a gap theorem and an $\varepsilon$-regularity theorem: (1) an ancient Ricci flow with a type-I scalar curvature bound and AVR close enough to 1 must be the static Euclidean space,  (2) a Ricci flow with locally type-I scalar curvature bound and local volume ratio close enough to 1 must be regular enough locally (in the sense that its curvature radius cannot be too small).

\end{abstract}
\section{Introduction}
An $n$-dimensional noncollapsed $\mathbb{F}$-limit metric soliton (metric soliton for short) $\left(\mathcal{X},(\nu_t)_{t\in(-T,0)}\right)$  arises as an $\mathbb{F}$-limit of a sequence of $n$-dimensional Ricci flows with bounded Nash entropy. It shares similar properties to a Ricci shrinker, among which self-similarity is an important one. Precisely, a metric soliton is modeled on some metric measure space $(X,d,\nu)$, called its \emph{model}, which is a metric completion of a smooth (Riemannian) metric measure space (called its \emph{regular part}) denoted by $(\mathcal{R}_X,\fg,f_0)$, where the smooth function $f_0:\cR_X\to\bR$ is called \emph{the potential function};  because of the self-similarity, sometimes we do not distinguish a metric soliton from its model. On $\cR_X$, we have
\begin{gather*}
    d\nu=(4\pi)^{-\frac{n}{2}}e^{-f_0}d\fg,\\
    \Ric+\nabla^2f_0=\frac{1}{2}\fg.
\end{gather*}
Thus, it is easy to see that a metric soliton is a Ricci shrinker with singularities. Recall that a shrinker $(M^n,g,f)$ is a tuple of a complete and smooth Riemannian manifold $(M^n,g)$ and a potential function $f$ satisfying the above equations on the whole manifold. For more details about metric solitons, see \cite{Bam20b,Bam23}, or \S 2.2.

In this paper, we consider gap theorems of metric solitons. Under certain conditions, we expect a metric soliton to be a Gaussian soliton, that is, the Euclidean space with canonical metric and $|x|^2/4$ as the potential function.
 
Many studies have been conducted on the rigidity for Gaussian shrinkers. For example, Yokota \cite{Yo12} proved  that if the normalized $f$-volume of a Ricci shrinker is close enough to $1$, then the Ricci shrinker is Gaussian. This, in light of \cite{CN09}, is equivalent to saying that, if Perelman's $\mu$-functional (see \cite{Per02}) is close enough to $0$, then the Ricci shrinker is Gaussian. Chan-Ma-Zhang \cite{CMZ25} and Chan-Zhang \cite{CZ26} proved similar gap theorems for the local $\mu$-functional on a geodesic ball. In particular, the local gap theorem \cite{CMZ25} was generalized to metric solitons in \cite{CMZ24}.

Li-Wang \cite{LW20} researched the rigidity of the Gaussian shrinker from another perspective. They proved that a normalized shrinker must be isometric to the Gaussian shrinker if it is close enough to the Gaussian shrinker in the Gromov-Hausdorff sense. For other rigidity results of shrinkers, especially of cylindrical shrinkers, see \cite{LW24, CM25, LZ23, Zhang18, Zhang20}.

Wang-Wang \cite{WW25} established a rigidity theorem for Ricci shrinkers in terms of the volume ratio on a large ball. 
\begin{Theorem}[{\cite[Theorem 1.2]{WW25}}]\label{thm WW25}
    There exists an $\ep=\ep(n)>0$ such that any complete Ricci shrinker $(M^n,g,f)$ with 
\begin{align*}
    \omega_n\ep^n|B_g(x,\ep^{-1})|\ge1-\ep
\end{align*}
is Gaussian. Here and henceforth, $\omega_n$ stands for the volume of the $n$-dimensional Euclidean unit ball.
\end{Theorem}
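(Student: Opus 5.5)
Since this is the result of \cite{WW25} quoted here, I sketch how one would prove Theorem~\ref{thm WW25} by contradiction and compactness.

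Suppose the statement fails. Then for each $i$ there is a non-Gaussian shrinker $(M_i^n,g_i,f_i)$ and a point $x_i$ with $\omega_n\ep_i^n|B_{g_i}(x_i,\ep_i^{-1})|\ge 1-\ep_i$, where $\ep_i\to0$.

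\textbf{Step 1: locate $x_i$.} I would first show that $x_i$ lies at bounded distance from the minimum point $p_i$ of $f_i$. The plan is to combine the quadratic growth $f\approx d^2/4$ (Cao--Zhou) with a Bishop--Gromov type comparison adapted to shrinkers. Far from $p_i$, the drift term makes $\Ric_f$-comparison lose volume, so the volume ratio of a very large ball cannot stay close to Euclidean unless the ball reaches back to the base point. I expect this step to need care. An alternative is to use $\ep_i^{-1}\gg d(x_i,p_i)$ after possibly shrinking the radius, keeping almost maximal volume via relative volume comparison.

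\textbf{Step 2: the limit is Gaussian.} Almost-Euclidean volume ratio on a huge ball should force the normalized $f$-volume, or equivalently $\mu$, to be close to $0$. I would try to get this through Perelman's entropy estimate: a large ball with almost Euclidean volume gives a test function for the $\mathcal{W}$-functional at scale $1$ whose value is close to $0$, hence $\mu(g_i,1)\ge-\delta_i$ with $\delta_i\to0$. This use of the log-Sobolev inequality is the route I would try first. Here a scalar curvature lower bound $R\ge0$ and integration by parts of $R+|\nabla f|^2=f$ help control the error terms.

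\textbf{Step 3: conclude.} Once $\mu$ is close to $0$, Yokota's gap theorem \cite{Yo12}, together with \cite{CN09}, says that $g_i$ is Gaussian for large $i$, which is a contradiction.

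\textbf{Main obstacle.} I expect the main difficulty to be converting the local almost-Euclidean volume on a ball into a global entropy bound, since the shrinker may have no Ricci lower bound. One has to show that the region outside $B(x_i,\ep_i^{-1})$ contributes negligibly to $\mu$. For this I would use the Gaussian weight $e^{-f}$ together with the volume growth bound $|B(p,r)|\le Cr^n$, so that the tail is exponentially small.
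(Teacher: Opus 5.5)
\textbf{Step 1 is false.} It is not merely delicate: the hypothesis does not confine $x$ near the minimum point $p$ of $f$. Take a non-flat asymptotically conical shrinker, e.g.\ a Feldman--Ilmanen--Knopf shrinker of real dimension $n=2m\ge4$, which is asymptotic to the flat cone $\mathbb{C}^m/\mathbb{Z}_k$. Let $d(x,p)=D$. After rescaling by $D^{-1}$, the ball $B_g(x,\ep^{-1})$ becomes a ball of radius $(\ep D)^{-1}\to0$. Its center lies at distance about $1$ from the vertex, in a region converging smoothly to the cone. So for every fixed $\ep$ its volume ratio tends to $1$ as $D\to\infty$, yet the shrinker is not Gaussian. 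The theorem must therefore be read with $x$ a minimum point of $f$. This is how the paper states its own analogue, Theorem \ref{thm-AVR gap corollary}, with the ball centered at a center $x_0$. Under that reading Step 1 is vacuous. Note also that no unweighted relative volume comparison is available on a shrinker, so your alternative argument for Step 1 would not work either.

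\textbf{Step 2 is the real gap.} The inequality goes the wrong way, and the key idea is missing. Since $\mu(g,1)=\inf\mathcal{W}$, a test function built from the large ball only gives $\mu\le\mathcal{W}(u)\approx0$, which holds on every shrinker anyway. What you need is the lower bound, i.e.\ $\int_M(4\pi)^{-n/2}e^{-f}dV\ge1-\delta$ for $f$ normalized by $R+|\nabla f|^2=f$.

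This lower bound cannot be extracted at scale $1$ from $|B(p,\ep^{-1})|$ alone. Without a Ricci lower bound you do not know how the volume is distributed inside the ball. Even with exactly Euclidean volume growth, the bound $f\le\frac14(d+\sqrt{2n})^2$ loses a fixed dimensional amount. The region outside the ball, which you call the main obstacle, only increases this integral and is harmless.

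The route the paper takes from \cite{WW25} (Proposition \ref{prop h monotone}) passes through all scales. Set $h(\tau)=\int_M(4\pi\tau)^{-n/2}e^{-f/\tau}dV$. Integrating $\Delta f$ by parts against $e^{-f/\tau}$ and using the soliton identities gives $h'(\tau)=(4\pi)^{-n/2}\tau^{-n/2-2}(1-\tau)\int_M R\,e^{-f/\tau}dV$, which is $\le0$ for $\tau\ge1$ because $R\ge0$. By the coarea formula together with $sV'(s)/V(s)\to n$, one gets $h(\tau)\to\AVR$ as $\tau\to\infty$ when $\AVR>0$. Here $V(s)$ is the volume of $\{2\sqrt f\le s\}$ and $\chi(s)$ is the integral of $R$ over it. Hence $e^{\mu}=h(1)\ge\AVR$, and Yokota's gap theorem \cite{Yo12} applies once $\AVR\ge1-\delta$. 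That final appeal is your Step 3, which is the right endgame.

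A further ingredient is needed to pass from a single ball to the AVR, because volume ratios are not monotone here. Set $P(s)=V(s)s^{-n}-4\chi(s)s^{-n-2}$. Then $P'(s)=\frac{2\chi(s)}{s^{n+1}}\bigl(\frac{2n+4}{s^2}-1\bigr)$, hence $\omega_n\AVR\ge P(A)-2\int_A^\infty\chi(s)s^{-n-1}ds$. An integral scalar curvature bound such as $\chi(s)\le Cs^{n-1/2}$ then makes the error $O(A^{-1/2})$.

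Neither the monotonicity of $h(\tau)$ nor this $P(s)$ argument appears in your plan. Without them, Step 2 has no mechanism that turns large-scale volume into a lower bound on $\mu$.
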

In this paper we prove a similar result for metric solitons.  Note that the AVR of a metric soliton does exist by \cite[Corollary 3.3]{DWZ26}. For a metric soliton $(X,d,\nu)$, we define the AVR to be
\begin{align*}
    \cA:=\lim_{A\to+\infty}\frac{|B_{\fg}(x_0,A)\cap\cR_X|}{\omega_nA^n},
\end{align*}
where $(\cR_X,\fg,f_0)$ is the regular part and $x_0$ is a center (see \S 2.2.2) of the metric soliton. 
\begin{Theorem}\label{thm-AVR gap for metric soliton}
There exists a $\dt=\dt(n)>0$ such that any $n$-dimensional metric soliton with   $\cA\ge1-\dt$ is the Gaussian soliton.
\end{Theorem}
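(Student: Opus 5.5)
Our plan is to argue by contradiction and compactness, reducing to the already-known behaviour of the tangent cone at infinity together with the rigidity of Euclidean space among metric solitons whose Nash entropy vanishes. Suppose there are $n$-dimensional metric solitons $(X_i,d_i,\nu_i)$, none of them Gaussian, with $\cA_i\to 1$. The first step is to bound the AVR from above by volume ratios at finite scales. We would prove a volume comparison of Bishop--Gromov type on the regular part $\cR_X$, which has full measure and whose singular set has codimension at least $4$ in Bamler's theory. It should say that $r\mapsto |B_\fg(x_0,r)\cap\cR_X|/(\omega_n r^n)$ is almost monotone, up to errors that vanish as $r\to\infty$, at least for balls centred at a center $x_0$. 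This would come from $\Ric+\nabla^2 f_0=\tfrac12\fg$ and the growth $f_0\approx d^2/4$. The conclusion we want is that for every fixed $r$, the volume ratio $|B(x_0,r)\cap\cR_X|/(\omega_nr^n)$ is at least $\cA-\Psi(r^{-1})$. The cleaner way to get this is to rescale: the asymptotic cone of a metric soliton is a metric cone with volume ratio $\cA$, which is available via \cite{DWZ26}. So large balls are close to balls in a cone of volume ratio $\cA$, and we then use volume convergence for $\bF$-limits, i.e. Bamler's Colding-type volume continuity on noncollapsed limits.

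The second step converts a volume ratio near $1$ on a large ball into Nash entropy near $0$. For a metric soliton the Nash entropy $\cN$ is constant in the base-point and time. It equals $\int f_0\,d\nu-\tfrac n2$ and is expressed through the weighted measure $(4\pi)^{-n/2}e^{-f_0}d\fg$. Using $f_0(x)\ge (d(x_0,x)-C)^2/4$ together with volume ratio close to $1$ on a ball of radius $\gg 1$, we expect to show that $\int (4\pi)^{-n/2}e^{-f_0}d\fg$ behaves like the Gaussian integral. Combined with the normalization $\nu(X)=1$, this should force $|\cN|$ to be small. Alternatively, the soliton can be realised as an $\bF$-limit of smooth flows. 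In those flows, parabolic neighbourhoods have volume ratio almost Euclidean at the scales in question, and Bamler's $\ep$-regularity theorem (almost-Euclidean volume or Nash entropy implies regularity) then applies directly.

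The final step is the $\ep$-regularity gap. Bamler's theory gives an $\ep(n)$ such that $\cN_{x}(\tau)\ge-\ep$ at all scales implies that $x$ is regular with curvature radius at least $\ep\sqrt\tau$. Applying this with all scales $\tau\to\infty$, which is allowed by self-similarity, gives a regular point with infinite curvature radius. Hence $\Rm\equiv0$, and the soliton is a flat complete shrinker, namely the Gaussian soliton. This contradicts the choice of the $X_i$, or rather yields the theorem directly without passing to a sequence.

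The main obstacle is the first step: making monotonicity or volume convergence rigorous on the singular space. We need that the volume of $B(x_0,r)\cap\cR_X$ in the metric $d$ controls the $\bF$-limit heat-kernel data, in the form of lower semicontinuity of the Nash entropy with respect to volume ratios. We would first try to use the asymptotic-cone description from \cite{DWZ26} together with Bamler's volume convergence under $\bF$-convergence on regular parts. Since codimension-$4$ singularities do not affect volume, this should be enough.
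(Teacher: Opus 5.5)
There is a genuine gap in your Step 2, and it is the step that carries the content of the theorem.

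The soliton entropy is a scale-one quantity. Since $\nu$ is a probability measure and $f_0-W=\rho^2/4$, one has $e^W=\int_{\cR_X}(4\pi)^{-n/2}e^{-(f_0-W)}\,d\fg=\int_0^\infty(4\pi)^{-n/2}e^{-s^2/4}\,dV(s)$. This weight puts most of its mass where $\rho$ is of order $\sqrt{n}$. The hypothesis on $\cA$ only gives information far out. The best finite-scale statement available is $V(s)\ge\omega_n\cA s^n$ for $s\ge\sqrt{2n+4}$, which follows from the monotonicity of $P(s)$ and $\chi\ge0$. This, rather than an asymptotic-cone or Bishop--Gromov argument, is the usable form of your Step 1. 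Plugging it in gives only $e^W\ge c(n)(1-\dt)$ for some fixed $c(n)<1$, not $e^W\ge 1-\ep$.

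The growth of $f_0$ cannot close this gap. The bound $f_0\ge(d-C)^2/4$ that you quote estimates $e^W$ from above, which is the wrong direction. The upper bound $f_0-W\le(d+C_1(n))^2/4$ carries an additive constant that costs a definite factor exactly at scale $\sqrt n$, and a fixed soliton has no rescaling freedom to push that error away.

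Your alternative also fails. Bamler's $\ep$-regularity is phrased in terms of Nash entropy. His volume--entropy comparisons hold only up to non-sharp constants, so they cannot force $\cN\approx 0$. A volume-ratio $\ep$-regularity theorem is essentially Theorem \ref{thm main2} of this paper, and its proof depends on the statement you are trying to prove, so invoking it would be circular.

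The missing idea is a monotone quantity that carries the Gaussian weight from scale one out to infinity. The paper sets $h(\tau)=\int_{\cR_X}(4\pi\tau)^{-n/2}e^{-(f_0-W)/\tau}\,d\fg$. Then $h(1)=e^W$, and $h(\tau)\to\cA$ as $\tau\to\infty$ by the co-area formula together with $sV'(s)/V(s)\to n$ (from \eqref{ineq5} and Lemma \ref{lem xvxv}). Using $\Delta f_0+R=\frac n2$, $R+|\nabla f_0|^2=f_0-W$, and the identity $\int_{\cR_X}\Delta f_0\,e^{-f_0/\tau}d\fg=\tau^{-1}\int_{\cR_X}|\nabla f_0|^2e^{-f_0/\tau}d\fg$, one obtains $h'(\tau)=(4\pi)^{-n/2}\tau^{-n/2-2}(1-\tau)\int_{\cR_X}R\,e^{-(f_0-W)/\tau}d\fg$. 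This is $\le 0$ for $\tau>1$ because $R\ge0$, so $\cA\le e^W$.

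The real technical work is justifying that integration by parts across the singular set. It uses the cut-offs $\eta_r$ of \cite{CMZ24} with their codimension-four estimates, plus cut-offs at infinity controlled by the growth of $f_0$ and by $V(s)\le Cs^n$. Then $\cA\ge1-\dt$ gives $W\ge\log(1-\dt)$, and the soliton entropy gap \cite[Proposition 6.1]{CMZ24} finishes the proof.

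Your Step 3 essentially re-derives that gap theorem, with one error to fix. The Nash entropy of a metric soliton is not constant in the base point; only the entropy based at the limit of $(x_i,0)$ is identically $W$. That argument must therefore be run at this base point, or simply replaced by the citation.
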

The proof is similar to \cite{WW25}. By studying the monotonicity of 
$$h(\tau):=\int_{\cR_{X}}(4\pi\tau)^{-\frac{n}{2}}e^{-\frac{f_0-W}{\tau}}d\fg,\,\,\tau>0,$$ 
we conclude that $e^W=h(1)\ge\lim_{\tau\to+\infty}h(\tau)=\cA$, where $W\le 0$ is the soliton entropy (see \S 2.2.1). The crucial technical part is to verify the validity of the following integration by parts, namely, 
\begin{align*}
    \int_{\cR_{X}}\Delta f_0  e^{-\frac{f_0-W}{\tau}}d\fg=\int_{\cR_{X}} \frac{|\nabla f_0|^2}{\tau} e^{-\frac{f_0-W}{\tau}}d\fg.
\end{align*}
The proof of this equation is slightly different from the Ricci soliton case, since $\partial\cR_X$ contains points not at infinity. Once the above is established, the $\AVR$ gap theorem follows from the soliton entropy gap theorem in  \cite{CMZ24}.

The following quantitative version of Theorem \ref{thm-AVR gap for metric soliton} is more versatile in application.

\begin{Theorem}\label{thm-AVR gap corollary}
    There exists a $\dt=\dt(n)>0$ such that the following holds. For a metric soliton  $(X,d,\nu)$ with regular part $(\cR_X,\fg,f_0)$ and a center $x_0$, if there is an $A\ge\dt^{-1}$ such that
$$\frac{|B_{\fg}(x_0,A)\cap\cR_X|}{\omega_nA^n}\ge1-\dt,$$
then the metric soliton is a Gaussian soliton.
\end{Theorem}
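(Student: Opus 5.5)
Theorem \ref{thm-AVR gap corollary} can be reduced to Theorem \ref{thm-AVR gap for metric soliton} by compactness, or, more directly, obtained by rerunning the monotonicity argument at a finite scale. I would follow the direct route. The soliton entropy gap theorem of \cite{CMZ24} gives an $\ep_0(n)>0$ such that any metric soliton with $W\ge-\ep_0$ is Gaussian. It therefore suffices to show that the finite-scale volume hypothesis forces $e^W\ge 1-C(n)\dt-o_{\dt}(1)$, with the error small when $\dt$ is small.

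\textbf{Step 1 (monotone quantity).} Take the function $h(\tau)=\int_{\cR_X}(4\pi\tau)^{-n/2}e^{-(f_0-W)/\tau}d\fg$ from the proof of Theorem \ref{thm-AVR gap for metric soliton}. Using the integration by parts established there, I expect $h$ to be nonincreasing in $\tau$ with $h(1)=e^W$. So $e^W\ge h(\tau)$ for every $\tau\ge1$, and the task reduces to bounding $h(\tau)$ from below at one well-chosen $\tau$.

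\textbf{Step 2 (lower bound at scale $\tau\sim A^2$).} I would use the standard growth estimates for the potential on metric solitons (from \cite{Bam23} or the structure theory in \S 2.2): $f_0(x)\le \frac14\bigl(d(x_0,x)+C(n)\bigr)^2$ near a center $x_0$, and $W\le f_0-\frac n2$-type normalizations. Restricting the integral to $B_{\fg}(x_0,\lambda A)\cap\cR_X$ for $\lambda\le 1$ and choosing $\tau=\sigma A^2$, the integrand is at least $(4\pi\sigma A^2)^{-n/2}e^{-(\lambda A+C)^2/(4\sigma A^2)}e^{W/\tau}$. For the volume of the smaller ball I would use Bishop--Gromov relative volume comparison. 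This does not hold verbatim on a soliton, but $\Ric\ge\frac12\fg-\nabla^2 f_0$ together with the fact that $f_0$ is controlled gives a weighted volume comparison, as in \cite{WW25}. Alternatively one can use the volume monotonicity of shrinkers in the form $|B(x_0,r)|/r^n$ almost nonincreasing for large $r$, extended to $\cR_X$ via codimension-$4$ smallness of the singular set. Either way, $|B(x_0,r)\cap\cR_X|\ge(1-\dt')\omega_n r^n$ holds for $r$ ranging over $[\eta A,A]$. Then integrate in shells using the layer-cake formula for $e^{-r^2/4\tau}$, where $\int_0^\infty (4\pi\tau)^{-n/2}e^{-r^2/4\tau}\,d(\omega_n r^n)=1$. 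Choosing $\sigma$ so that the Gaussian mass essentially sits in $[\eta A,A]$ (for instance $\sigma$ small but fixed, with $A\ge\dt^{-1}$ large absorbing the constant $C$ and the term $W/\tau$) gives $h(\sigma A^2)\ge 1-\Psi(\dt|n)$.

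\textbf{Step 3.} Combining, $e^W\ge 1-\Psi(\dt|n)$, so $W\ge-\ep_0$ for $\dt$ small, and \cite{CMZ24} yields the Gaussian soliton. The main obstacle is the volume comparison in Step 2, that is, propagating the lower bound on the ball of radius $A$ down to all smaller radii. If that step fails, I would fall back on compactness. Suppose metric solitons $\cX_i$ with $A_i\to\infty$ and ratios $\ge 1-\dt_i$, $\dt_i\to0$, are all non-Gaussian. By Theorem \ref{thm-AVR gap for metric soliton} their AVR is $<1-\dt$, and by monotonicity $h(\tau)\le e^{W_i}\le 1-\ep_0$ uniformly. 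Evaluating $h_i$ at $\tau=A_i^2$ then contradicts the ratio near $1$ at radius $A_i$, since only an upper bound on $f_0$ and the volume of $B(x_0,A_i)$ enter.
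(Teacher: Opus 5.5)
Your direct route differs from the paper's. Once Step 2 is done with the right tool, it is correct and actually lighter than the paper's argument.

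\textbf{How the two routes compare.} The paper goes through the AVR. Since $P(s)=V(s)/s^n-4\chi(s)/s^{n+2}$ is nonincreasing for $s\ge\sqrt{2n+4}$, monotonicity only gives $\cA\omega_n\le P(A)$. It cannot carry the lower bound at radius $A$ out to infinity. So the paper must show that the tail $\int_A^\infty 2\chi(s)s^{-n-1}\,ds$ is small. This needs $\chi(s)\le Cs^{n-1/2}$, which the paper obtains from Bamler's $L^{2-\ep}$ curvature bound (Lemma \ref{lm Rm lp}). It then invokes Theorem \ref{thm-AVR gap for metric soliton}, which itself uses $\lim_{\tau\to\infty}h(\tau)=\cA$.

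You instead evaluate $h$ at a finite scale $\tau=\sigma A^2\ge1$, which only sees volumes at scales $\lesssim A$. There the monotonicity of $P$ works in your favour: the large scale controls the smaller ones. You therefore need neither the curvature estimate, nor $\cA>0$, nor the limit of $h$. You only need the monotonicity of $h$ on $[1,\infty)$ (Proposition \ref{prop h monotone} proves this without assuming $\cA>0$) and the entropy gap of \cite{CMZ24}. What the paper gains in return is a quantitative comparison between finite-scale volume ratios and $\cA$.

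\textbf{How Step 2 must be done.} Do not use a Bishop--Gromov or Bakry--\'Emery comparison. Its error terms are governed by $|\nabla f_0|\sim d_{\fg}(x_0,\cdot)/2$, so it will not give the sharp ratio $1-\dt'$; comparison geometry across the singular set is also delicate. Your second alternative is the right one, and it is already available in the paper. Lemma \ref{lemVX} gives $P'(s)=\frac{2\chi(s)}{s^{n+1}}\bigl(\frac{2n+4}{s^2}-1\bigr)\le0$ for $s\ge\sqrt{2n+4}$, using $R\ge0$. Combined with $\chi\le\frac n2V$ this yields
\[
\frac{V(s)}{s^n}\ge P(s)\ge P(s')\ge\Bigl(1-\frac{2n}{s'^2}\Bigr)\frac{V(s')}{s'^n},\qquad \sqrt{2n+4}\le s\le s'.
\]
Taking $s'=A+C(n)$ gives $V(s')\ge|B_{\fg}(x_0,A)\cap\cR_X|$ by \eqref{ineq4}.

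Work with the sublevel sets $D(s)$ rather than geodesic balls. Since $f_0-W=\rho^2/4$ exactly, Fubini gives
\[
h(\tau)=(4\pi\tau)^{-\frac n2}\int_0^\infty V(s)\,\frac{s}{2\tau}\,e^{-\frac{s^2}{4\tau}}\,ds,
\]
and the right side equals $1$ when $V(s)=\omega_ns^n$. Hence $h(\sigma A^2)\ge 1-\Psi(\dt\,|\,n)$ for $\sigma=\sigma(n)$ small and $A\ge\dt^{-1}$.

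This also removes the spurious factor $e^{W/\tau}$ in your integrand. Theorem \ref{thm<f<} bounds $f_0-W$, not $f_0$. As you wrote it, absorbing that factor would require a lower bound on $W$ that you never established.

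\textbf{Drop the compactness fallback.} It does not get around the obstacle. A lower bound on $|B_{\fg}(x_0,A_i)\cap\cR_X|$ alone only gives $h_i(\tau)\ge(4\pi\tau)^{-\frac n2}e^{-(A_i+C)^2/(4\tau)}(1-\dt_i)\omega_nA_i^n$. Optimized over $\tau$, this is at most $c_n=\sup_{x>0}x^{n/2}e^{-x}/\Gamma(\frac n2+1)<1$ (for instance $c_2=e^{-1}$). The reason is that the Gaussian weight is far from constant on the ball, and the volume could a priori concentrate near its boundary. So there is no contradiction with $e^{W_i}<e^{-\ep_0}$. Forcing $h$ close to $1$ requires volume lower bounds at all scales below $A_i$, which is exactly Step 2. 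Note also that no compactness is actually used in that argument.
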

To prove this, we need to control the difference between $\dfrac{|B_\fg(x_0,A)\cap\cR_{X}|}{\omega_n A^n}$ and $\cA$ by $o(A)$ so that we can use Theorem \ref{thm-AVR gap for metric soliton}. This could be done by deriving a local $L^{2-\ep}$ bound of $|\Rm|$ for metric solitons, which ensures that the error can be omitted in view of Lemma \ref{lem xvxv}. 

Since metric solitons are more general singularity models of Ricci flows, we expect our gap theorems above to be applied in more scenarios. Indeed, we shall derive an $\varepsilon$-regularity theorem and a gap theorem. Let us, first of all, recall some known $\ep$-regularity theorems in the Ricci flow. The first $\ep$-regularity theorem for the Ricci flow is Perelman's pseudolocality theorem \cite{Per02}.  Hein and Naber \cite{HN14} proved an $\ep$-regularity theorem on the pointed Nash entropy for Ricci flows with  type-I scalar curvature and $\mu$-functional bounded from below. Bamler \cite{Bam20a} generalized this result by removing the assumptions on the scalar curvature and the $\mu$-functional. Cheng and the second author \cite{CZ25} also gave an $\ep$-regularity theorem on Perelman's reduced volume. Li-Qu-Zhu \cite{LQZ26} obtained more $\ep$-regularity theorems for general Ricci solitons. 

Here, we derive an $\ep$-regularity theorem on the local volume ratio.

\begin{Theorem}\label{thm main2}
    There exists a $\dt=\dt(n,R_0)>0$ such that the following holds. Consider any Ricci flow $(M^n,g_t)_{t\in I}$ with bounded curvature on compact time-intervals. Suppose $[t_0-r^2,t_0]\subset I$ for some $r>0$ and $x_0\in M$. If
    \begin{enumerate}[(a)]
        \item $R\le \dfrac{R_0}{t_0-t}$, for all $(x,t)\in  B_{g_{t_0}}(x_0,r)\times[t_0-r^2,t_0]$,
        \item $\dfrac{|B_{g_{t_0}}(x_0,s)|_{g_{t_0}}}{\omega_ns^n}\ge1-\dt$, for all $s\in(0,r]$,
    \end{enumerate}
   then $r_{\Rm}\ge\dt r$. 
\end{Theorem}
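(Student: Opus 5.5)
We argue by contradiction and compactness, the standard route for $\ep$-regularity theorems, feeding the limit into Theorem \ref{thm-AVR gap corollary}. Fix $n,R_0$ and suppose the statement fails. Then there are Ricci flows $(M_i,g_{i,t})$, points $x_i$, times $t_i$ and scales $r_i$ satisfying (a) and (b) with $\dt_i\to0$, but with $r_{\Rm}(x_i,t_i)<\dt_i r_i$. After parabolic rescaling we take $r_i=1$ and $t_i=0$. We then apply a point-picking argument in the spirit of Perelman: choose points $(y_i,s_i)$ near $(x_i,0)$ with $Q_i:=r_{\Rm}(y_i,s_i)^{-2}\to\infty$ such that $r_{\Rm}\ge\frac12 Q_i^{-1/2}$ on a backward parabolic neighbourhood of size $A_iQ_i^{-1/2}$ with $A_i\to\infty$. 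This neighbourhood should stay inside the region where (a) and (b) hold. Because the type-I bound in (a) is centred at the final time $t_0$, the chosen points must satisfy $s_i$ very close to $0$ relative to $Q_i^{-1}$, or else the bound must be adapted to the shifted time. To avoid this bookkeeping we instead rescale around $(x_i,0)$ itself by a factor $\lambda_i\to\infty$, chosen so that $r_{\Rm}(x_i,0)$ stays comparable to $\lambda_i^{-1/2}$ in the original scale, i.e. $r_{\Rm}\sim1$ after rescaling. This keeps (a) scale invariant: $R\le R_0/(-t)$ on $B_{g_0}(x_i,\lambda_i^{1/2})\times[-\lambda_i,0]$.

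The next step is to pass to a limit. Condition (b) at small scales, together with the local type-I scalar bound, should bound the pointed Nash entropy from below at $(x_i,0)$. Here I would invoke Bamler's relation between volume and Nash entropy; a local version of it is needed, which is plausibly where the type-I bound on $R$ is used. We then take an $\bF$-limit of the conjugate heat kernels based at $(x_i,0)$, obtaining a metric flow $\cX$ on $(-\infty,0]$. Under a type-I scalar curvature bound, the $\bF$-limit with basepoint at the singular time should be a metric soliton. This follows the Hein--Naber/Bamler tangent-flow argument, where the type-I bound forces the Nash entropy to be asymptotically constant in scale, and it may require an additional blow-down. This step is delicate, since $(x_i,0)$ is a genuine point and not a singular time. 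The cleaner alternative is a second rescaling that blows down by factors tending to $\infty$ within the window; the type-I bound makes the pointed Nash entropy $\cN_{x_i,0}(\tau)$ almost constant over a long range of $\tau$. By Bamler's theory, almost constancy of the entropy in this range yields a metric soliton limit at a scale $\tau_i$ where the entropy pinching is small.

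Next we transfer the volume condition. Since (b) holds at all scales $s\le1$, after rescaling the volume ratio of $B_{g_0}(x_i,A)$ is $\ge1-\dt_i$ for every $A$ up to a huge radius. Under smooth convergence on the regular part, together with the fact that singular sets have measure zero and Bamler's volume estimates, the limit should satisfy $|B_\fg(x_\infty,A)\cap\cR_X|\ge(1-o(1))\omega_nA^n$. We also need to relate the basepoint to a center $x_0$, which costs only a bounded distance and is negligible for large $A$. Theorem \ref{thm-AVR gap corollary} then shows the limit is the Gaussian soliton. Bamler's $\ep$-regularity, or the smooth convergence near regular points of the limit, then gives $r_{\Rm}$ bounded below near the basepoint for large $i$, contradicting our normalisation. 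That normalisation forces $r_{\Rm}$ at the basepoint to go to zero relative to the soliton scale.

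The main obstacle is the volume lower semicontinuity. $\bF$-convergence does not directly control Riemannian volumes of $g_0$-balls at the final time, so we must show that volume cannot concentrate away into the singular set or escape, and that $g_0$-balls correspond to balls in the limit model. I would first try using the $L^{2-\ep}$ curvature bound mentioned in the paper, together with Bamler's codimension-4 singular set and the uniform lower volume bound, to get Hausdorff and volume control under the convergence.
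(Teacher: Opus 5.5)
There is a genuine gap in the step you flag as the main obstacle, and the remedy you propose there cannot work. Hypothesis (b) concerns only the final slice $g_{t_0}$. An $\bF$-limit of the pairs based at $(x_i,t_0)$ keeps no metric information at time $t_0$: its time-$t_0$ slice is just the basepoint, and the soliton model $X$ is the slice at rescaled time $-1$, i.e.\ original time $t_0-\tau_i$. So the task is not to show that ``$g_{t_0}$-balls correspond to balls in the limit model''. $L^{2-\ep}$ curvature bounds and the codimension-four singular set control the limit. They say nothing about how $g_{t_0}$-distances compare with $g_{t_0-\tau_i}$-distances in the approximating flows.

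What is needed is to transfer (b) to the slice $t_0-\tau_i$ of the flows themselves, and this is exactly what hypothesis (a) is for. The key tool is the distance distortion estimate of Lemma \ref{lem distance distortion}, $d_t(x,y)\le d_{t_0}(x,y)+C(n,Y,R_0)\sqrt{t_0-t}$, proved as follows:
\begin{enumerate}[(i)]
\item the type-I bound gives Perelman's $\ell(x,t)\le R_0$;
\item hence, by Bamler's heat kernel bounds, the $H_n$-center of $(x,t_0)$ at time $t$ lies within $C\sqrt{t_0-t}$ of $x$;
\item monotonicity of $d_{W_1}$ between conjugate heat kernels then gives the estimate.
\end{enumerate}
Combined with $R\ge-\frac n2$ (maximum principle) to control the volume form backward in time, this yields $|B_{g_t}(x_i,s+C\sqrt{t_0-t})|_{g_t}\ge e^{-C(t_0-t)}(1-\dt_i)\omega_ns^n$. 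After rescaling by $\tau_i$, the time-$(-1)$ balls of radius $A$ therefore have volume ratio at least $(1-\dt_i)(1-C/A)^n$. The same estimate puts the $H_n$-center $z_i$ within $C$ of $x_i$, which you asserted without justification.

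Only after this transfer does the semicontinuity question arise, and it arises on the slice $-1$. There the paper handles the two regions separately, obtaining $|B_{g^i_{-1}}(z_i,A)|\le|B_\fg(x_0,A+C)\cap\cR_X|+o(1)$:
\begin{enumerate}[(i)]
\item it bounds the volume of $\{r_{\Rm}<r\}$ by $Cr^{2-\ep}$ via Bamler's quantitative stratification;
\item it handles $\{r_{\Rm}\ge r\}$ by smooth convergence.
\end{enumerate}
The $L^{2-\ep}$ bound in the paper serves a different purpose: controlling $\chi$ on the soliton in the proof of Theorem \ref{thm-AVR gap corollary}.

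The roles you assign to (a) elsewhere are also off, and as written (a) does no essential work in your outline. That is the symptom of the missing step.
\begin{enumerate}[(i)]
\item The lower Nash entropy bound follows from (b) and $R\ge-\frac n2$ via Bamler's volume/entropy estimates, with no type-I input.
\item The type-I bound does not make $\cN_{x_i,t_0}(\tau)$ almost constant. A soliton limit comes from a pigeonhole argument instead. $\cN_{x_i,t_0}$ is nonincreasing in $\tau$, and $-C(n)\le\cN\le0$ across the roughly $|\log\dt_i|$ scales between $\dt_i^2$ and $1$. So some $\tau_i$ has pinching $\cN(\theta_i\tau_i)-\cN(\tau_i)\to0$ with $\theta_i\to0$.
\item You must choose this $\tau_i$ with $r_{\Rm}(x_i,t_0)^2\ll\tau_i$, for your final contradiction. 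You also need $\tau_iA^2\lesssim1$, so that (b) reaches radius $A$.
\end{enumerate}
Your endgame is sound: a Gaussian limit forces $\cN_{x_i,t_0}(\tau_i)\to0$, and Bamler's $\ep$-regularity then gives $r_{\Rm}(x_i,t_0)\gtrsim\sqrt{\tau_i}$.
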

\begin{Remark}
\begin{enumerate}[(1)]
    \item If we replace the type-I scalar curvature bound with 
$$R\le \frac{1}{r^2},\,\,\text{for all}\,\, (x,t)\in  B_{g_{t_0}}(x_0,r)\times[t_0-r^2,t_0],$$
then we may obtain a similar result where $\dt$ depends only on $n$;
\item If the type-I scalar curvature  condition is replaced with a type-I Riemannian curvature  condition (i.e. $|\Rm|\le R_0(t_0-t)^{-1}$), then a similar $\varepsilon$-regularity theorem follows immediately from \cite{WW25};

\item One may observe the close relation of the above theorem to the $\varepsilon$-regularity theorem for steady solitons in \cite{LQZ26}; however, the latter cannot be directly derived from applying the former to the canonical flow of a steady soliton, since they do not assume the boundedness of curvature;

\item Our theorem also bears some similarity to Bamler's backward pseudolocality theorem \cite{Bam20b}; however, we do not know how they are exactly related.
\end{enumerate}
\end{Remark}

Next we consider gap theorems for ancient Ricci flows. Yokota \cite{Yo12} proved a gap theorem on the asymptotic reduced volume, under the assumption of a lower bound of the Ricci curvature. The second author \cite{Zhang21} proved a gap theorem on the asymptotic $\cW$-entropy, assuming bounded  geometry. We shall prove the following theorem, which is a limit version of Theorem \ref{thm main2}.
\begin{Theorem}\label{thm main}
There exists a $\dt=\dt(n)>0$ such that the following holds. Consider an ancient Ricci flow $\lt(M^n,g_t\rt)_{t\le0}$ with bounded curvature on compact time-intervals. If 
\begin{enumerate}[(a)]
    \item $R\le\dfrac{R_0}{|t|}$, for all $(x,t)\in M\times(-\infty,0)$,
    \item there exists some $y_0\in M$ such that $\,\,\displaystyle \limsup_{A\to+\infty}\dfrac{|B_{g_0}(y_0,A)|_{g_0}}{\omega_nA^n}\ge1-\dt$,
\end{enumerate}
then the ancient Ricci flow is $\bR^n\times(-\infty,0]$.
\end{Theorem}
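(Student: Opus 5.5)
The plan is to blow down the ancient flow, extract a metric soliton as a tangent flow at infinity, and use the volume hypothesis to force that soliton to be Gaussian via Theorem \ref{thm-AVR gap corollary}. Rigidity of the original flow then follows from monotonicity of Nash entropy.

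First I will show that the flow is noncollapsed at all scales, so that Bamler's compactness theory applies to the blow-downs. Fix $\tau_i\to+\infty$ and set $g^i_t:=\tau_i^{-1}g_{\tau_i t}$. The type-I bound (a) is scale invariant, so $R_{g^i}\le R_0/|t|$ for every $i$. Hypothesis (b) says that, for $A$ large, $g_0$-balls around $y_0$ of radius $A\gg\sqrt{\tau_i}$ have volume ratio at least $1-2\dt$. Since $R\le R_0/|t|$, volumes change at a controlled rate in time, because $\partial_t d\mathrm{vol}=-R\,d\mathrm{vol}$. Together with Bishop–Gromov-type lower bounds coming from the heat kernel, this should give a lower bound on the Nash entropy $\cN_{(y_0,0)}(\tau)\ge -Y(n,R_0)$, uniform in $\tau$. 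I expect this to go through Bamler's estimate relating the Nash entropy to volumes of $H_n$-centered balls, using the scalar curvature bound to compare the centers with $y_0$. Bamler's compactness theorem then gives a subsequence $\bF$-converging to a metric flow. Since $\cN_{(y_0,0)}(\tau)$ is monotone and bounded below, its limit as $\tau\to\infty$ exists, and the limit flow is a noncollapsed $\bF$-limit metric soliton $(X,d,\nu)$ with regular part $(\cR_X,\fg,f_0)$ and center $x_0$.

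Second, I will pass the volume hypothesis to the soliton. The type-I scalar curvature bound gives local smooth convergence on the regular part and, by Bamler's theory in the type-I setting, controls distance distortion between the time-$0$ slices and the time $-1$ slice of the rescaled flows. I want to conclude that
\[
\frac{|B_{\fg}(x_0,A)\cap\cR_X|}{\omega_nA^n}\ge 1-C\dt
\]
for some fixed large $A\ge\dt^{-1}$. To do this I transport the time-$0$ balls $B_{g_0}(y_0,A\sqrt{\tau_i})$ back to time $-\tau_i$, using the bound $\int R\,dt\lesssim R_0\log$ to control volume loss. Volume lower semicontinuity under $\bF$-convergence with smooth convergence on $\cR_X$ is fine, and the singular set has measure zero. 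The main obstacle, in my view, is the comparison between distances at time $0$ and at time $-\tau_i$, and between $y_0$ and the $H_n$-centers. My first attempt will use the type-I bound on $R$ together with Bamler's $\sqrt{\tau}$-distance control of $H_n$-centers, which shows that $y_0$ at time $-\tau_i$ lies within $C\sqrt{\tau_i}$ of a center. The only condition needed for Theorem \ref{thm-AVR gap corollary} is that the lower volume bound holds at one large radius. So an $O(\sqrt{\tau_i})$ shift of the center costs only a factor $(1\pm C/A)^n$, which is absorbed by taking $A$ large.

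Third, once the volume bound holds, Theorem \ref{thm-AVR gap corollary} says the tangent flow at infinity is the Gaussian soliton. Its entropy is then $W=0$, so $\lim_{\tau\to\infty}\cN_{(y_0,0)}(\tau)=0$. Since $\cN\le0$ is nonincreasing in $\tau$, it follows that $\cN_{(y_0,0)}(\tau)\equiv0$. The equality case of Nash entropy monotonicity then forces the flow to be static flat Euclidean space. Alternatively, $\varepsilon$-regularity at every scale gives $r_{\Rm}=\infty$ and hence flatness, and the volume ratio close to $1$ then rules out nontrivial flat quotients. This yields $\bR^n\times(-\infty,0]$.
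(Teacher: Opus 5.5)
Your architecture is the paper's: blow down along a sequence of scales, take a tangent flow at infinity, which is a metric soliton, pass a volume bound to it, and apply Theorem \ref{thm-AVR gap corollary}. The type-I bound enters only through the distance distortion $d_{g_{-\tau}}\le d_{g_0}+C(n,R_0)\sqrt{\tau}$, obtained via $H_n$-centers. Your final step (via $W=0$, monotonicity of $\cN_{y_0,0}$ and its equality case) is correct and more explicit than the paper's.

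The genuine gap is the step you call ``fine'': transferring the volume lower bound from the rescaled flows to the limit. Smooth convergence on $\cR_X$ gives, for each compact $K\subset B_{\fg}(x_0,A)\cap\cR_X$, that $|K|_{\fg}=\lim_i|\psi_i(K)|_{g^i_{-1}}$ with $\psi_i(K)\subset B_{g^i_{-1}}(z_i,A+C)$. Hence it yields only $|B_{\fg}(x_0,A)\cap\cR_X|\le\liminf_i|B_{g^i_{-1}}(z_i,A+C)|_{g^i_{-1}}$. You need the reverse inequality $|B_{\fg}(x_0,A+C)\cap\cR_X|\ge\limsup_i|B_{g^i_{-1}}(z_i,A)|_{g^i_{-1}}$, i.e. that no volume of the approximating balls disappears into the singular set. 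That $\cS$ is null in the limit says nothing about this, because it is not a uniform statement about the flows $g^i$.

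The missing ingredient is a uniform-in-$i$ bound on the high-curvature part of the approximating balls. The paper obtains $|B_{g^i_{-1}}(z_i,A)\cap\{r_{\Rm}<r\}|_{g^i_{-1}}\le C(n,A,Y)r^{\alpha}$ from Bamler's quantitative stratification (Lemma \ref{Lm: volume of the singular part}). It then shows by a covering argument (Lemma \ref{Lm:smooth volume convergence}) that, for large $i$, $B_{g^i_{-1}}(z_i,A)\cap\{r_{\Rm}\ge r\}$ lies inside $\psi_i$ of a compact subset of $B_{\fg}(x_0,A+C(n))\cap\cR_X$. This uses that points with $r_{\Rm}\ge r$ converge within $\fC$ to regular points having unscathed parabolic neighborhoods of size comparable to $r$. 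Letting $i\to\infty$ and then $r\to0$ gives the needed inequality.

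Two smaller points also need repair.

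\emph{Volume comparison in time.} The comparison between times $0$ and $-\tau_i$ is not controlled by the type-I bound. Indeed $|\Omega|_{g_{-\tau}}=\int_\Omega\exp\big(\int_{-\tau}^0R\,dt\big)\,d\mathrm{vol}_{g_0}$, so an upper bound on $R$ only bounds the earlier volume from above, and $\int_{-\tau}^0R_0|t|^{-1}\,dt$ diverges anyway. What prevents volume loss is $R\ge0$, which holds for complete ancient flows by Chen's theorem. The same fact also gives the entropy lower bound directly: combine it with Bamler's upper volume bound $|B_{g_0}(y_0,\rho)|_{g_0}\le C(n)e^{\cN_{y_0,0}(\rho^2)}\rho^n$ at the good radii and monotonicity of $\cN$ in $\tau$. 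Neither the type-I bound nor centers are needed for that.

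\emph{Choice of scales.} Hypothesis (b) is a $\limsup$, so the good radii form only a sequence $\rho_j\to\infty$. You cannot fix $\tau_i\to\infty$ arbitrarily; you must take $\tau_i$ comparable to $\rho_i^2/A^2$, as the paper does with $\tau_i=\sigma\lambda_i$. This is harmless for your conclusion, since $\lim_{\tau\to\infty}\cN_{y_0,0}(\tau)$ does not depend on the chosen sequence.
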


\begin{Remark}
    \begin{enumerate}[(1)]
        \item As before, if one replaces the type-I scalar curvature condition with a type-I Riemannian curvature  condition (i.e. $|\Rm|\le R_0|t|^{-1}$), then a similar gap theorem follows from \cite{WW25};
        \item We remind the reader that, Perelman \cite{Per02} alreay proved that an ancient Ricci flow with bounded and nonnegative curvature operator must have zero AVR, unless it is static Euclidean;
        \item Unlike Theorem \ref{thm main2}, the  $\delta$ in the above theorem is independent of the type-I constant $R_0$. 
    \end{enumerate}
\end{Remark}

The proofs of the two theorems above are somehow similar. In both cases, arguing by contradiction, we will properly rescale the flow (or flows) to obtain an $\bF$-limit metric soliton. The type-I scalar curvature bound allows us to use a distance distortion estimate to transfer the volume conditions to the rescaled $-1$ slices, which is furthermore taken to the limit metric soliton, and hence we can use Theorem \ref{thm-AVR gap corollary} to see that the metric soliton we obtained is the Euclidean space; this is enough to get a contradiction.
\\

\emph{Acknowledgement.} The second author is much indebted to Professor Yu Li for many inspiring discussions. Both authors would like to thank Professor Meng Zhu for some helpful personal communications, especially on his manuscript in preparation \cite{LQZ26}.


\section{Preliminaries}

Bamler's $\mathbb{F}$-convergence and $\mathbb{F}$-compactness \cite{Bam20b, Bam23} theorems are central techniques in our work. Because of its extensive length, we shall only introduce the main notions and results applied by us. It is assumed that the readers are generally familiar with the content herein. 

\subsection{$\mathbb{F}$-convergence and $\mathbb{F}$-compactness}

Bamler's $\mathbb{F}$-convergence concerns sequences of metric flow pairs. In \cite[Definition 3.1, Definition 5.1]{Bam23}, a metric flow pair $\lt(\mathcal{X},\lt(\mu_t\rt)_{t\in I}\rt)$ is a couple of an evolving metric space $\cX$ and an evolving probability measure which behaves like a heat kernel.   $\mathbb{F}$-convergence could be viewed as Gromov-Wasserstein convergence for almost every time-slice; see \cite[Definition 5.5, Definition 5.7]{Bam23}. In \cite[Theorem 7.4]{Bam23}, Bamler showed that in the $\bF$-sense, the space of $H$-concentrated metric flow pairs is compact.

In this paper, we only consider the case where the metric flow pairs are actually couples of Ricci flows and  conjugate heat kernels. Let $\lt(M^i,g^i_{t}\rt)_{t\in(-T_i,0]}$ be a sequence of $n$-dimensional Ricci flows with bounded curvature within each compact time interval. For each $i$, let $x_i\in M^i$ be a fixed base point and 
\begin{align*}
    d\nu^i_{x_i,t\,;\, s}:= K^i\lt(x_i,t\,;\,\cdot,s\rt)\,dg^i_s,\quad s\le t
\end{align*}
be the probability measure induced by the conjugate heat kernel based at $(x_i,t)$. Then by \cite{Bam20a}, $ \lt\{ \lt( (M^i,g^i_t )_{t\in(-T_i,0]}, (\nu^i_{x_i,0\,;\,t})_{t\in(-T_i,0]} \rt) \rt\}_{i=1}^\infty$ is a sequence of $H_n$-concentrated metric flow pairs, where
$$H_n=\frac{(n-1)\pi^2}{2}+4.$$
By \cite[Theorem 7.4]{Bam23}, we can find a (not relabeled) subsequence, such that
\begin{align}\label{eq:limiting sequence}
    \left( \lt(M^i,g^i_t\rt)_{t\in(-T_i,0]}, \lt(\nu^i_{x_i,0;t} \rt)_{t\in(-T_i,0]}\right)\xrightarrow[i\to\infty]{\mathbb{F},\ \mathfrak{C}, \ J} \lt(\mathcal{X},\lt(\nu_t\rt)_{t\in(-T,0)}\rt),
\end{align}
where $\mathfrak{C}$ stands for a correspondence (see \cite[Definition 6.1]{Bam23}), $J$ is a prescribed finite set of times such that the convergence is time-wise on $J$. The limit space $\lt(\mathcal{X},\lt(\nu_t\rt)_{t\in(-T,0)}\rt)$ is an $H_n$-concentrated metric flow pair,  $\displaystyle T:=\limsup_{i\to\infty} T_i\in(0,+\infty]$, and $\nu_t$ is a conjugate heat flow satisfying 
$$\operatorname{Var}(\nu_t)\le H_n|t|,$$  
where $\operatorname{Var}$ is the variance.

If the sequence in \eqref{eq:limiting sequence} is additionally noncollapsed, which means that for some $\tau>0$ and $Y<+\infty$,
\begin{align}\label{eq:noncollapsing assumption}
    \mathcal{N}_{x_i,0}(\tau)\ge -Y\quad \text{ for each }\ i,
\end{align}
then Bamler \cite{Bam20b} proved some partial regularity results for the limit flow:

\begin{Theorem}[Bamler's partial regularity result {\cite[Theorem 2.4, Theorem 2.5]{Bam20b}}]\label{thm partial regular}
    If the noncollapsed assumption \eqref{eq:noncollapsing assumption} holds, then the $\bF$-limit $\lt(\mathcal{X},(\nu_t)_{t\in(-T,0)}\rt)$ from \eqref{eq:limiting sequence} satisfies a decomposition $\mathcal{X}=\mathcal{R} \sqcup \mathcal{S}$  such that the following hold.
    \begin{enumerate} [(a)]
        \item The regular part $\mathcal{R}$ is a smooth Ricci flow space-time: $\mathcal{R}$ is locally space-time product, and on each slice $\mathcal{R}_t$, there is a metric $\fg_t$, such that $\fg_t$ satisfies the Ricci flow equation. Furthermore, $d\nu_t=u_td\fg_t$ on $\mathcal{R}_t$, where $u$ is a positive solution to the conjugate heat equation on $\mathcal{R}$;
        \item $\mathcal{S}$ is a set of measure zero for each $t$, and the space-time Minkowski codimension of $\mathcal{S}$ is no smaller than four;
        \item For each $t$, the metric completion of $(\mathcal{R}_t,\fg_t)$ is the metric space $\mathcal{X}_t$;
        \item The convergence is smooth on $\mathcal{R}$ (see Theorem \ref{Thm_smooth_convergence} for more details).
    \end{enumerate}
    Here (and always) $\mathcal{X}_t$, $\mathcal{R}_t$, and $\mathcal{S}_t$ stand for time-slices.
\end{Theorem}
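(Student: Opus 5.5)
This statement is quoted from \cite[Theorem 2.4, Theorem 2.5]{Bam20b}, and in the present paper we would simply invoke it. If one had to reprove it, we would follow Bamler's strategy. It rests on an $\ep$-regularity theorem for the pointed Nash entropy \cite{Bam20a}, combined with a quantitative stratification of the limit, in the spirit of Cheeger--Naber.

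\emph{Regular and singular sets.} We would define $\cR\subset\cX$ as the set of points $x\in\cX_t$ admitting a backward parabolic neighborhood on which the approximating flows $(M^i,g^i_t)$ have uniformly bounded curvature. Via the correspondence $\fC$, this is the same as $\liminf_i r_{\Rm}(x_i)>0$ for approximating points $x_i$. Then:
\begin{enumerate}[(1)]
\item On such neighborhoods, Shi's estimates and the $H_n$-concentration give smooth subsequential convergence. Gluing the local limits yields the smooth Ricci flow space-time structure on $\cR$, the metrics $\fg_t$, and the fact that the $\nu_t$ have smooth positive conjugate-heat densities $u_t$. This proves (a) and (d).
\item The Nash-entropy $\ep$-regularity says: if $\cN_{x_i}(r^2)\ge-\ep_n$, then $r_{\Rm}(x_i)\ge\ep_n r$. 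Since Nash entropy is semicontinuous under $\bF$-convergence, every point of $\cS=\cX\setminus\cR$ has entropy bounded away from $0$ at all small scales. Combined with the noncollapsing \eqref{eq:noncollapsing assumption}, this gives the volume and heat-kernel bounds needed below.
\end{enumerate}

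\emph{Dimension estimate.} We would run a quantitative stratification of $\cS$ in space-time. The Nash entropy $\cN_x(\tau)$ is monotone in $\tau$ and bounded below by $-Y$. Hence at all but a bounded number $N(\ep,Y)$ of dyadic scales, its pinching is $<\ep$. At each such scale the flow is $\ep$-close, in the $\bF$-sense, to a metric soliton.

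A cone-splitting lemma then organizes these scales: several almost-self-similar centers in general position force almost-splitting of $\bR^k$ factors, or almost-staticity. A Vitali/neck-decomposition covering argument then gives the following bound on the quantitative singular set (here $r$ is the parabolic scale, with time measured in units of $r^2$):
\[
\#\bigl\{\text{parabolic $r$-balls needed to cover}\bigr\}\le C r^{-(n+2-4+\ep)} .
\]
This requires an exponent $4$, not merely $2$, which amounts to ruling out the two lower strata. Concretely, one must show that no tangent flow at a point of $\cS$ is either
\begin{enumerate}[(i)]
\item static, of the form $\bR^{n-2}\times C(S^1_\theta)$ or $\bR^{n-1}\times\bR_+$ (codimension two and one); or
\item a quasi-static flow splitting off $\bR^{n-3}$.
\end{enumerate}
Given this, the Minkowski codimension bound in (b) follows, and measure zero of $\cS_t$ follows by integrating over time slices.

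\emph{Main obstacle.} The main obstacle is exactly this exclusion of codimension-two (and three) singular models. For the cone $\bR^{n-2}\times C(S^1_\theta)$ we would argue via a heat-kernel/Nash-entropy comparison. Such a cone carries an entropy deficit that the smooth approximating flows cannot produce with arbitrarily small holonomy scale, because the approximating flows are smooth. A local version of Perelman's pseudolocality, or an almost-rigidity for splitting maps along time, should show that approximants close to such a cone are actually smooth with curvature bounds, which is a contradiction.

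Finally, (c) follows from the covering estimate. Since $\cS_t$ has codimension at least $2$ and $\nu_t$ has full support, any two regular points can be joined through $\cR_t$ by curves of almost-minimal length. Therefore $\cX_t$ is the metric completion of $(\cR_t,\fg_t)$.
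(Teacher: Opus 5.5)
Citing \cite[Theorems 2.4, 2.5]{Bam20b} is exactly what the paper does; it gives no proof of this statement, so within the paper nothing more is needed. Your sketched reproof, however, does not amount to a proof, and the gap is at the step you yourself flag. What is needed is the $\ep$-regularity statement the paper later quotes from \cite[Corollary 17.2]{Bam20b} in the proof of Lemma~\ref{Lm: volume of the singular part}: a point with small $r_{\Rm}$ is neither weakly $(n-1,\ep,r')$-split nor simultaneously $(\ep,r')$-static and weakly $(n-3,\ep,r')$-split. This places $\cS$ in $\tilde{\cS}^{\ep,n-2}$, and only then does \cite[Proposition 11.2]{Bam20b} give the $r^{-(n-2)-\ep}$ covering.

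The mechanisms you suggest cannot exclude static cones $\bR^{n-2}\times C(S^1_\theta)$. First, smooth approximants with a definite entropy deficit can perfectly well have a singular static limit. For $n\ge 4$, blow-downs of the static Eguchi--Hanson flow times $\bR^{n-4}$ are smooth, noncollapsed flows. They converge to the static cone $\bR^{n-4}\times\bR^4/\mathbb{Z}_2$, whose Nash entropy at the vertex is $-\log 2$. So any argument must use codimension two essentially. In the Einstein case this is Cheeger--Naber's slicing and Gauss--Bonnet argument for almost-splitting maps, and the corresponding argument is where the real work in \cite{Bam20b} lies. Second, pseudolocality does not help. It needs almost-Euclidean volume ratio or isoperimetry near the axis, which fails when $\theta$ is bounded away from $2\pi$. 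When $\theta$ is close to $2\pi$, the Nash-entropy $\ep$-regularity already suffices, so the hard case is exactly the one pseudolocality cannot reach.

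There are two smaller problems. Deducing $|\cS_t|=0$ ``by integrating over time slices'' gives this only for almost every $t$, whereas (b) claims it for every $t$. To get every $t$, intersect the $P^*$-covering with the slice and use that each $P^*$-neighborhood of scale $r$ meets a time-slice in volume $\le Cr^n$ \cite[Theorem 9.8]{Bam20a}. This gives $\lesssim r^{-(n-2)-\ep}\cdot r^n\to 0$, which is the computation carried out in the proof of Lemma~\ref{Lm: volume of the singular part}. Your argument for (c) is also a non sequitur. Even granting a codimension-two bound for $\cS_t$, full support of $\nu_t$ says nothing about whether almost-minimizing curves between regular points can be pushed off $\cS_t$. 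That is what identifying $d_t|_{\cR_t}$ with the length metric of $\fg_t$ requires. On a Riemannian manifold it would rest on a segment inequality coming from a Ricci lower bound, which time-slices of a Ricci flow lack.
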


We present a detailed elaboration on the local smooth convergence in Theorem \ref{thm partial regular}(d).

\begin{Theorem}[{\cite[Theorem 9.21]{Bam23} and \cite[Theorem 2.5]{Bam20b}}]\label{Thm_smooth_convergence}
Suppose \eqref{eq:limiting sequence} and \eqref{eq:noncollapsing assumption} both hold, then we can find an increasing sequence $U_1 \subset U_2 \subset \ldots \subset \mathcal{R}$ of open subsets with $\bigcup_{i=1}^\infty U_i = \cR$, open subsets $V_i \subset M^i\times(- T_i,0]$, time-preserving diffeomorphisms $\psi_i : U_i \to V_i$ and a sequence $\ep_i \to 0$ such that the following hold:
\begin{enumerate}[label=(\alph*)]
\item \label{Thm_smooth_convergence_a} We have
\begin{align*}
 \Vert \psi_i^* g^i - \fg \Vert_{C^{[\ep_i^{-1}]} ( U_i)} & \leq \ep_i, \\
  \Vert  u^i \circ \psi_i - u \Vert_{C^{[\ep_i^{-1}]} ( U_i)} &\leq \ep_i, 
\end{align*}
where $d\nu^i_{x_i,0\,;\,t}=u^i(\cdot,t)\,dg^i_t$, $d\nu_{t}=u(\cdot,t)\,d\mathfrak{g}_t$.
\item \label{Thm_smooth_convergence_b} Let $x \in \cR$ and $(x_i,t_i) \in M^i\times(-T_i,0]$.
Then $(x_i,t_i) \to x$ within $\fC$ if and only if $(x_i,t_i) \in V_i \subset M^i\times(-T_i,0]$ for large $i$ and $\psi_i^{-1} (x_i,t_i) \to x$ in $\cR$.
\item \label{Thm_smooth_convergence_c} If the convergence \eqref{eq:limiting sequence} is time-wise at some time $t \in (-T,0]$ for some subsequence, then for any compact subset $K \subset \cR_t$ and for the same subsequence
\[ \sup_{x \in K \cap U_i} d^Z_t \lt(\varphi^i_t (\psi_i(x)), \varphi_t (x) \rt)  \longrightarrow 0, \]
where $\mathfrak{C}:=(Z,\varphi^i_t,\varphi_t)$ is the correspondence.
\item \label{Thm_smooth_convergence_d} Consider a sequence of conjugate heat flows $(\tilde\nu_t^i)_{t < t_0}$ on $M^i\times(-T_i,0]$, $i \in \bN $, for $t_0 \le0$ such that
\[ (\tilde\nu_t^i)_{ t < t_0} \xrightarrow[i \to \infty]{\quad \fC \quad} (\tilde\nu_t)_{ t < t_0}. \]
Write $d\tilde\nu^i_t = \tilde v^i_t \, dg^i_t$  for $i \in \bN$ and $d\tilde\nu_t = \tilde v_t \, d\mathfrak{g}_t$.
Then on $\cR$
\[    \tilde v^i \circ \psi_i  \xrightarrow[i \to \infty]{\quad C^\infty_{\loc} \quad} \tilde v . \]
\end{enumerate}
\end{Theorem}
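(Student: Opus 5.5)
This statement is quoted from \cite[Theorem 9.21]{Bam23} and \cite[Theorem 2.5]{Bam20b}; if I had to reprove it, I would follow Bamler's construction, which has four steps.

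\textbf{Step 1: local smooth convergence near each regular point.} Fix a regular point $x\in\cR$. By Theorem \ref{thm partial regular}, there is a parabolic neighborhood of $x$ on which the curvature radius of $\fg$ is bounded below by some $r>0$. Choose points $(x_i,t_i)\in M^i\times(-T_i,0]$ converging to $x$ within $\fC$. The noncollapsing assumption \eqref{eq:noncollapsing assumption} gives a lower bound on the pointed Nash entropy at these points. The $\ep$-regularity theorem of \cite{Bam20a} then converts closeness (in $\bF$-sense) to a smooth flow into a uniform lower bound $r_{\Rm}(x_i,t_i)\ge cr$. Shi's estimates now give uniform bounds on all derivatives of curvature on a fixed parabolic ball. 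Cheeger--Gromov--Hamilton compactness produces a smooth subsequential limit. The $\bF$-convergence identifies this limit, isometrically, with the corresponding piece of $\cR$.

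\textbf{Step 2: global diffeomorphisms $\psi_i$.} Exhaust $\cR$ by an increasing sequence of precompact open sets $U_j$, for instance $\{r_{\Rm}>1/j\}$ intersected with bounded regions. Cover each $\overline{U_j}$ by finitely many of the neighborhoods from Step 1. The local charts have to be patched into a single almost-isometric, time-preserving map. I would use a center-of-mass gluing of almost-isometries, and a diagonal argument in $j$ and $i$ then yields $\ep_i\to0$ and the estimates in (a). The same chart argument handles the densities. The functions $u^i$ are positive solutions of the conjugate heat equation with locally bounded geometry, and they satisfy Bamler's Gaussian heat kernel bounds. Parabolic regularity gives $C^\infty_{\loc}$ compactness for them, and the weak convergence of the measures $\nu^i_{x_i,0;t}$ identifies the limit as $u$.

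\textbf{Step 3: properties (b)--(d).} Property (b) holds because both $\fC$-convergence and convergence through $\psi_i$ are characterized by convergence of nearby conjugate heat kernel measures. Property (c) follows by comparing $\varphi^i_t\circ\psi_i$ with $\varphi_t$ on the compact set $K$ via the $W_1$-closeness of heat kernels at time-wise times. Property (d) is proved like the density part of Step 2: uniform local $C^k$ bounds for $\tilde v^i$ together with the weak convergence of $\tilde\nu^i_t$ identify the limit.

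\textbf{Main obstacle.} I expect Step 1 to be the hardest: obtaining a uniform curvature-radius lower bound along the sequence near a regular point of the limit. This is exactly where the noncollapsing hypothesis and the $\ep$-regularity theory of \cite{Bam20a,Bam20b} are essential. A secondary difficulty is making the gluing in Step 2 time-preserving and consistent across overlapping charts.
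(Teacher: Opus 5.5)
The paper does not prove this statement; it quotes it from Bamler, and the two citations match the two halves of your outline. Roughly:
\begin{itemize}
\item \cite[Theorem 9.21]{Bam23} builds $U_i,V_i,\psi_i$ and proves (a)--(d) on the open set of points near which the convergence is already known to be smooth, i.e.\ has locally uniform curvature bounds along approximating sequences.
\item \cite[Theorem 2.5]{Bam20b} shows that, under the noncollapsing hypothesis, this set is all of $\cR$.
\end{itemize}
Your Steps 2--3 are an acceptable sketch of the first half. One caveat: identifying a Cheeger--Gromov--Hamilton local limit with a piece of $\cR$ is not automatic. $\bF$-convergence sees the flows only through conjugate heat kernels and $W_1$-distances, so matching the local smooth limit, with its distances and time function, to $\cX$ needs an actual argument.

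The genuine gap is Step 1. It carries essentially all of the content of \cite[Theorem 2.5]{Bam20b}, and you assert it rather than prove it; flagging it as the main obstacle does not close it. The concrete problems are these.
\begin{itemize}
\item The $\ep$-regularity theorem of \cite{Bam20a} has a Nash-entropy hypothesis, not an $\bF$-closeness hypothesis. You need $\cN_{x_i,t_i}(\rho^2)\ge-\ep$ at some fixed scale $\rho\ll r$ at which the limit is almost Euclidean. The noncollapsing bound $\cN\ge-Y$ is far from this.
\item Knowing that $\cX$ is smooth near $x$ does not give this bound along the sequence. $\bF$-convergence is a weak, Wasserstein-type convergence of the conjugate heat kernels. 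Entropy-type quantities such as $\cN$ (essentially $-\int u\log u\,dg$) are controlled by weak convergence only from above, because concentration is invisible in the limit and lowers entropy. At best this gives $\limsup_i\cN_{x_i,t_i}(\rho^2)\le\cN_x(\rho^2)$, which is the useless direction.
\item Put differently, a priori the flows may carry curvature concentrating at $(x_i,t_i)$ on scales tending to zero, which the $\bF$-limit does not see. Ruling this out under noncollapsing is exactly why Bamler needs the noncollapsed structure theory of \cite{Bam20b} on top of \cite{Bam23}.
\end{itemize}
To close the gap you must either show that the entropies at $(x_i,t_i)$ at a fixed small scale are close to $\cN_x(\rho^2)\approx 0$, or supply an $\bF$-based $\ep$-regularity theorem with its own proof. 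Without that, Step 1 fails as stated, and so does the claim $\bigcup_i U_i=\cR$, as opposed to a possibly smaller open subset of $\cR$.
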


\subsection{Noncollapsed $\mathbb{F}$-limit metric soliton}

In view of \eqref{eq:limiting sequence}, if a noncollapsed $\bF$-limit space also satisfies that the Nash entropies converge time-wise to a constant function, namely,
\begin{align}\label{eq:entropy converging to const}
    \lim_{i\to\infty}\mathcal{N}_{x_i,0}(\tau)=W\quad \text{ for all }\quad \tau \in(0,T), 
\end{align}
where $W$ is a constant, then we call the limit flow $(\mathcal{X},\nu_t)$ a \emph{noncollapsed $\mathbb{F}$-limit metric soliton}, and $W$ is called the \emph{soliton entropy}. Without loss of generality, we always assume $T>3$. In this paper, our focus is placed on this specific category of limit spaces, which exhibits favorable properties analogous to those of the classical Ricci shrinkers. Let us recall some basic facts about such self-similar metric flows.

\subsubsection{Basic equations and regularity}

\begin{Theorem}[{\cite[Theorem 2.18, Theorem 15.69]{Bam20b}}]\label{thmbase}
    For a noncollapsed $\mathbb{F}$-limit metric soliton $\lt(\mathcal{X},\lt(\nu_t\rt)_{t\in(-T,0)}\rt)$, we write $\tau=-t$, $d\nu_t=(4\pi\tau)^{-\frac{n}{2}}e^{-f}d\fg_t$ on $\mathcal{R}.$ 
    \begin{enumerate}[(a)]
        \item  On the regular part $\mathcal{R}$, $\nabla f$ is complete, and we have
    \begin{equation}\label{eq1}
        \Ric+\nabla^2f-\frac{1}{2\tau}\fg_t=0,\qquad -\tau\left(|\nabla f|^2+R\right)+f\equiv W,
    \end{equation}
where $W$ is the soliton entropy.
\item There is a metric measure space $(X,d, \nu)$, called the model of the metric soliton, such that
$$\mathcal{X}_{<0}=X\times(-T,0),$$
and the following hold for all $t\in(-T,0)$:

\begin{enumerate}[(i)]
    \item $(\mathcal{X}_t,d_t)=(X\times\{t\},|t|^{1/2}d)$.
    \item $\mathcal{R}=\mathcal{R}_X\times(-T,0)$, where $\mathcal{R}_X$, the regular part of $X$, is a smooth manifold.
    \item $(\mathcal{R}_t,\fg_t)=(\mathcal{R}_X\times\{t\},|t|\fg)$, where $\fg$ is a Riemannian metric on $\mathcal{R}_X$.
    \item $\nu_t = \nu$.
\end{enumerate}
Moreover, there is a unique family of probability measures $(\nu'_{x;t})_{x\in X,\, t\ge 0}$ such that the tuple $\lt(X,d,\nu,\lt(\nu'_{x;t}\rt)_{x\in X,\, t\ge 0}\rt)$ is a model for $\lt(\mathcal{X},\lt(\nu_t\rt)_{t\in(-T,0)}\rt)$ in the sense of \cite[Definition 3.57]{Bam23}.

\item The singular part of the model $(X,d,\nu)$ is a null set with codimension no less than four, and $(X,d)$ is the metric completion of $(\mathcal{R}_X,\fg)$.

\item Writing $f_0:=f(\cdot,-1)$, then the tuple $(\mathcal{R}_X,\fg,f_0)$, called the regular part of $(X,d,\nu)$, satisfies the shrinker equations
\begin{align*}
        \Ric+\nabla^2f_0=\frac{1}{2}\fg,\,\,\, |\nabla f_0|^2+R=f_0- W,\,\,\,    \nabla R =2\Ric(\nabla f_0). 
    \end{align*}
\item If $(\mathcal{R}_X,\fg)$ is non-Ricci-flat, then the scalar curvature is positive everywhere on $\mathcal{R}_X$. If $(\mathcal{R}_X,\fg)$ is Ricci-flat, then $(X,d)$ is a metric cone.
\end{enumerate}

\end{Theorem}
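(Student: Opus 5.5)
Since this statement collects results of Bamler, the plan is to recover each item from the $\bF$-convergence machinery in Theorem \ref{thm partial regular} and Theorem \ref{Thm_smooth_convergence}, rather than to build anything new. The starting point is Perelman's monotonicity for the pointed Nash entropy along each flow $(M^i,g^i_t)$. For $\tau_1<\tau_2$, the difference $\cN_{x_i,0}(\tau_1)-\cN_{x_i,0}(\tau_2)$ equals a space-time integral over $[-\tau_2,-\tau_1]$. Its integrand is
\[
2\tau\bigl|\Ric+\nabla^2 f^i-\tfrac{1}{2\tau}g^i\bigr|^2\,d\nu^i_{x_i,0;t},
\]
up to the usual normalization, where $d\nu^i_{x_i,0;t}=(4\pi\tau)^{-n/2}e^{-f^i}dg^i_t$. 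By \eqref{eq:entropy converging to const} the left-hand side tends to $0$ for every pair $\tau_1<\tau_2$. The smooth convergence on compact subsets of $\cR$ given by Theorem \ref{Thm_smooth_convergence}(a),(d) then lets us pass this to the limit, which gives $\Ric+\nabla^2 f-\frac{1}{2\tau}\fg_t=0$ on all of $\cR$. This is the first equation in \eqref{eq1}.

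Next I would obtain the second identity in \eqref{eq1} and the completeness of $\nabla f$. The standard shrinker computation (differentiate, use the contracted Bianchi identity) shows that $-\tau(|\nabla f|^2+R)+f$ is locally constant on each time slice. To identify the constant with $W$ and to see that it is independent of the slice, I would write the Nash entropy of the limit as $\int(f-\tfrac n2)\,d\nu_t$ and compare it with the $\cW$-type integral $\int(\tau(2\Delta f-|\nabla f|^2+R)+f-n)\,d\nu_t$. Both pass to the limit because the entropies converge and because $\cS_t$ has codimension at least four. That codimension bound lets us choose cutoff functions near $\cS$ with small $L^2$ gradient, which justifies the integrations by parts on $\cR_t$.

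Completeness of $\nabla f$ is the key step and I expect it to be the main obstacle. It amounts to showing that integral curves of $\tau\nabla f$ can never reach $\cS$ in finite time. I would attempt this using the heat-kernel-type bounds on $u$ near singular points, together with the $H_n$-concentration (variance bound) to rule out escape to infinity. Granted completeness, the flow of the time-dependent vector field $\partial_t-\nabla f$ gives a family of diffeomorphisms of $\cR$ that identify each $(\cR_t,\fg_t)$ with $(\cR_{-1},|t|\fg_{-1})$ and preserve $\nu_t$. Taking metric completions (Theorem \ref{thm partial regular}(c)) and using that these maps are isometries up to scaling then produces the model $(X,d,\nu)$ with $\cX_{<0}=X\times(-T,0)$, which gives (b) and (c). The transition measures $\nu'_{x;t}$ are obtained by the same rescaling of the conjugate heat kernels. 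Part (d) is simply (a) restricted to $t=-1$, and $\nabla R=2\Ric(\nabla f_0)$ follows by differentiating the second identity.

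For (e), applying the strong maximum principle to $\Delta_f R=R-2|\Ric|^2$ on the connected manifold $\cR_X$ gives either $R>0$ or $R\equiv0$. Here I would use that $R\ge0$ holds on the limit, since it follows from the ancient-like lower bound $R\ge -\frac{n}{2\tau}$ improved by self-similarity. In the Ricci-flat case we get $\nabla^2 f_0=\frac12\fg$, so $2\sqrt{f_0-W}$ is a distance function whose Hessian is that of a cone. Passing to the completion, this shows that $(X,d)$ is a metric cone.
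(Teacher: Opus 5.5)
The paper gives no proof of this statement; it imports it from \cite{Bam20b}, so your outline has to stand on its own.

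The first step is the right idea, with one correction. The space-time integral of $2\tau|\Ric+\nabla^2 f-\tfrac{1}{2\tau}g|^2$ over $[-\tau_2,-\tau_1]$ equals $\mathcal{W}(\tau_1)-\mathcal{W}(\tau_2)$, not $\cN(\tau_1)-\cN(\tau_2)$. Pinching of $\cN$ at all scales still passes to $\mathcal{W}$, because $\tau\cN(\tau)=\int_0^\tau\mathcal{W}$ is concave. Two further points are assumed silently:
\begin{itemize}
\item Identifying the constant with $W$, and the strong maximum principle in (e), both use that $\cR_t$ is connected.
\item The bound that self-similarity upgrades to $R\ge0$ is $R\ge-\tfrac{n}{2(t+T_i)}$ on the approximating flows, not $R\ge-\tfrac{n}{2\tau}$.
\end{itemize}

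The genuine gap is the completeness step. The model in (b), and everything built on it, rests on this step, and the tools you propose cannot deliver it.

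\textbf{Why your tools fail.} On a soliton, the conjugate heat equation together with $R+\Delta f=\tfrac{n}{2\tau}$ gives $\partial_{\ft}f=|\nabla f|^2$. Hence $f$, and therefore $u$, is constant along integral curves of $\partial_{\ft}-\nabla f$. Also $|\nabla f|^2\le (f-W)/\tau$, so these curves have locally bounded speed. Heat-kernel bounds are therefore blind to whether a curve approaches $\cS$. A curve of bounded speed can reach $\cS$ in finite time unless the regularity scale along it is controlled. Separately, completeness of $\tau\nabla f$ inside time-slices would not give you a global flow of the space-time field $\partial_{\ft}-\nabla f$. At that stage $\cR$ is not known to be a product, and its worldlines may a priori leave $\cR$.

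\textbf{The missing idea.} Set $Z:=\tau(\partial_{\ft}-\nabla f)$; this field is an infinitesimal parabolic homothety of the Ricci flow space-time $\cR$. To see this:
\begin{itemize}
\item From $[\partial_{\ft},\nabla f]=2(\Ric+\nabla^2 f)(\nabla f)=\tau^{-1}\nabla f$ one gets $[\partial_{\ft},Z]=-\partial_{\ft}$.
\item One also has $\mathcal{L}_Z\fg_t=-\fg_t$ on spatial vectors, $Z\ft=\tau$, and $Zf=0$.
\end{itemize}
So the local flow of $Z$ maps worldlines to worldlines and rescales the metric parabolically. Consequently $\tau^{-1/2}r_{\Rm}$ and $f$ are both constant along its integral curves. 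These curves stay in regions $\{\tau^{-1/2}r_{\Rm}\ge c,\ f\le C\}$, whose time-slices are compact in $\cR_t$; the Gaussian heat-kernel upper bound makes $\{f\le C\}$ bounded. Hence the curves exist for all $t\in(-T,0)$. This is what produces $\cR=\cR_X\times(-T,0)$, $\fg_t=|t|\fg$ and $\nu_t=\nu$.

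Even after this, (a) asserts completeness of $\nabla f_0$ on $\cR_X$ itself. The paper later uses this to say that worldlines in $\cR$ never become extinct. That is a further assertion, and your sketch does not address it.
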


\subsubsection{The center and the quadratic growth of the potential function}

By \cite[\S 4.1]{CMZ24}, there exists a point $x_0\in\mathcal{R}_X=\mathcal{R}_{-1}$, called a center of a metric soliton, such that
$$\Var\lt(\nu_t,\delta_{x_0(t)}\rt)\le H_n|t|\quad \text{ for all } t\in(-T,0).$$
Here $x_0(t)$ is the integral curve of $\partial_t-\nabla f_0$ with $x_0(-1)=x_0$ in $\mathcal{R}$, which exists for all $t$. In the following we always use the notation $x_0$ to represent the center.

\begin{Theorem}[{\cite[Lemma 7.12]{FL25}}]\label{thm<f<}
    Let $(X,d,\nu)$ be an n-dimensional non-collapsed $\mathbb{F}$-limit metric soliton,  $(\mathcal{R}_X,\fg,f_0)$ be the regular part, and $x_0\in\mathcal{R}_X$ be a center. For any $\ep>0,$ we have
    \begin{equation*}
        \frac{1}{4+\varepsilon}d^2_{\fg}(x_0,x)-C(n,\varepsilon)\le f_0(x)-W\le\frac{1}{4}\lt(d_{\fg}(x_0,x)+C_1(n)\rt)^2,
    \end{equation*}
    where $W$ is the soliton entropy.
\end{Theorem}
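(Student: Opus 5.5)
The lower bound will come from a Cao--Zhou type second-variation argument along minimizing geodesics, and the upper bound from the gradient bound $|\nabla\sqrt{f_0-W}|\le \frac12$ together with a uniform bound on $f_0-W$ near the center $x_0$. Throughout I use the identities in Theorem \ref{thmbase}(d) and the fact that $R\ge 0$ on $\cR_X$, which holds by Theorem \ref{thmbase}(e).

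\emph{Upper bound.} Since $|\nabla f_0|^2+R=f_0-W$ and $R\ge 0$, we get $0\le f_0-W$ and $|\nabla f_0|^2\le f_0-W$.
\begin{itemize}
\item Hence the function $\sqrt{f_0-W+\eta}$, for $\eta>0$, is $\frac12$-Lipschitz along every curve in $\cR_X$.
\item $d_{\fg}$ restricted to $\cR_X$ is the length metric of $(\cR_X,\fg)$, since $X$ is its metric completion. Integrating along almost-minimizing curves in $\cR_X$ and letting $\eta\to0$ gives $\sqrt{f_0(x)-W}\le \sqrt{f_0(y)-W}+\frac12 d_{\fg}(x,y)$.
\item It therefore suffices to find a point $y$ with $d_{\fg}(x_0,y)\le C(n)$ and $f_0(y)-W\le C(n)$. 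Then, applying the same inequality from $y$ to $x$, $f_0(x)-W\le\frac14(d_{\fg}(x_0,x)+C_1(n))^2$.
\end{itemize}
To find $y$, use the center property. From $\Var(\nu,\delta_{x_0})\le H_n$, Chebyshev gives $\nu(B_{\fg}(x_0,2\sqrt{H_n}))\ge \frac12$. Write $d\nu=(4\pi)^{-n/2}e^{-W}e^{-(f_0-W)}d\fg$. I will invoke Bamler's volume upper bound, which passes to the limit: $|B_{\fg}(x_0,r)\cap\cR_X|\le C(n)e^{W}r^n$ for $r\ge 1$. Together these give
\[
\inf_{B_{\fg}(x_0,2\sqrt{H_n})\cap \cR_X}(f_0-W)\le C(n),
\]
so a suitable $y$ exists.

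\emph{Lower bound.} Fix $x\in\cR_X$ with $L:=d_{\fg}(x_0,x)$ large, and suppose there is a minimizing unit-speed geodesic $\gamma:[0,L]\to\cR_X$ from $x_0$ to $x$. Choose a cutoff $\phi$ equal to $1$ on $[1,L-1]$ and linear near the ends. The second variation formula then gives
\[
\int_0^L \phi^2\Ric(\gamma',\gamma')\,ds\le (n-1)\int_0^L|\phi'|^2\,ds .
\]
This bounds $\int_0^L\Ric(\gamma',\gamma')\,ds$ by $C(n)$ plus integrals of $\Ric$ over the two unit end segments. I bound those end terms via $\Ric(\gamma',\gamma')=\frac12-\frac{d^2}{ds^2}f_0(\gamma)$ and integration by parts against $1-\phi^2$. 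This produces terms $|\nabla f_0|$ at points within distance $1$ of the endpoints, controlled by $\sqrt{f_0-W}$ through the upper bound, exactly as in Cao--Zhou. Integrating $\frac{d^2}{ds^2}f_0(\gamma)=\frac12-\Ric(\gamma',\gamma')$ once gives
\[
\frac{d}{ds}f_0(\gamma(s))\ge \frac s2-C(n)-C\sqrt{f_0(x)-W}.
\]
Integrating again and using $|\nabla f_0(x_0)|\le C(n)$ yields $f_0(x)-W\ge \frac{L^2}{4}-C(n)L-C$. The linear term is absorbed as $C(n)L\le \frac{\ep}{4(4+\ep)}L^2+C(n,\ep)$, which gives $f_0(x)-W\ge\frac{1}{4+\ep}L^2-C(n,\ep)$.

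\emph{Main obstacle.} The hardest step is the existence of minimizing geodesics lying entirely in $\cR_X$. I would use that the singular set has codimension at least four (Theorem \ref{thmbase}(c)), together with Bamler's structure theory for such limits, which gives that $\cR_X$ is almost geodesically convex. Concretely, for $\nu\otimes\nu$-a.e. pair of points, or at least for $x$ in a dense subset, some minimizing geodesic from $x_0$ avoids $\cS$. If a minimizing geodesic reaches a singular point only in the limit, I would run the argument on almost-minimizing curves in $\cR_X$ with small error, keeping the error terms uniform. Having the inequality for a dense set of $x$, I extend it to all $x\in\cR_X$ by continuity of $f_0$ and of $d_{\fg}$.
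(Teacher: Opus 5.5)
The paper gives no proof of this statement; it is imported from \cite[Lemma 7.12]{FL25}, so I assess your argument on its own.

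Your upper bound is sound. $R\ge0$ gives $|\nabla f_0|^2\le f_0-W$, so $\sqrt{f_0-W+\eta}$ is $\frac12$-Lipschitz for the length metric of $(\cR_X,\fg)$, which is $d|_{\cR_X}$ by Theorem \ref{thmbase}(c). Chebyshev plus a volume bound then produces the low point $y$ near $x_0$. The one thing to track is that the volume bound carries the factor $e^{W}$; this is what makes $C_1$ depend on $n$ only. That factor comes from Bamler's entropy-weighted volume estimate on the approximating flows, passed to the limit through smooth convergence on compact subsets of $\cR_X$.

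The Cao--Zhou computation \cite{CZ10} is also fine \emph{along a minimizing geodesic contained in $\cR_X$}. However, your intermediate pointwise bound $\frac{d}{ds}f_0(\gamma(s))\ge\frac s2-C(n)-C\sqrt{f_0(x)-W}$ for every $s$ would lose $O(L^2)$ after integration. What the integration by parts actually yields is the weighted-average bound $2\int_{L-1}^{L}(L-s)\frac{d}{ds}f_0(\gamma(s))\,ds\ge\frac L2-C(n)$. Hence $|\nabla f_0|\ge\frac L2-C(n)$ somewhere on $\gamma([L-1,L])$. Then $|\nabla f_0|^2\le f_0-W$ together with the $\frac12$-Lipschitz bound gives $f_0(x)-W\ge\frac14(L-C(n))^2$ for $L\ge C(n)$.

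The genuine gap is the step you flag yourself, and it carries the whole difficulty of the singular setting. You need, for a dense set of $x$, a minimizing geodesic of $(X,d)$ from $x_0$ (or a point near $x_0$) to $x$ lying entirely in $\cR_X$. Nothing at your disposal provides this. Theorem \ref{thmbase}(c) only says that $\cS_X$ is small and that $d|_{\cR_X}$ is the length metric of $\fg$, i.e.\ that points of $\cR_X$ are joined by \emph{almost} minimizing curves in $\cR_X$. Smallness of the singular set does not by itself keep minimizers away from it. Weak-convexity results of Colding--Naber type rest on a lower Ricci bound (segment inequality, Hessian estimates on the approximating spaces). Here the approximating time-slices have no Ricci lower bound, and on $\cR_X$ you only have the Bakry--\'Emery identity $\Ric+\nabla^2f_0=\frac12\fg$. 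That identity yields comparison estimates only along geodesics already known to stay in $\cR_X$. So ``$\cR_X$ is almost geodesically convex by Bamler's structure theory'' asserts the hard part rather than citing an available result.

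The fallback does not work either. The inequality $\int\phi^2\Ric(\gamma',\gamma')\le(n-1)\int|\phi'|^2$ is the nonnegativity of the index form, and it requires $\gamma$ to be a genuinely (locally) minimizing geodesic. Take even a $\fg$-geodesic in $\cR_X$ whose length exceeds $d_{\fg}(x_0,x)$ by $\eta$. Minimality then only gives $\frac{s^2}{2}I(V,V)+O(s^3)\ge-\eta$ for variations of amplitude $s$, where $I$ is the index form. Both the admissible $s$ (the variation must stay in $\cR_X$) and the constant in $O(s^3)$ are controlled by the curvature scale along the curve. That scale degenerates as such curves approach $\cS_X$ when $\eta\to0$, so the errors cannot be made uniform. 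Hence your proposal does not establish the lower bound.

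It is telling that the quoted statement only has $\frac{1}{4+\ep}$. Were minimizing geodesics in $\cR_X$ available, your computation would already give $\frac14(d_{\fg}(x_0,x)-C(n))^2$.
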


\subsubsection{AVR of metric solitons}\label{Volume sth}

Consider $(\mathcal{R}_X,\fg,f_0)$ with a center $x_0\in\mathcal{R}_X$ as mentioned above. Define $\rho:=2\sqrt{f_0-W}\ge0$, and Theorem \ref{thm<f<} implies
\begin{equation}\label{ineq4}
    \sqrt{\frac{1}{1+\ep}} d_{\fg}(x,x_0)-C(n,\ep)\le\rho(x)\le d_{\fg}(x,x_0)+C(n).
\end{equation}
Similarly to \cite{CZ10} or \cite{Zhang11}, we define
\begin{align}\label{eq defs of AVR}
    D(s)&:=\{x\in \cR_X,\rho(x)\le s\},\nonumber \\
    V(s)&:=\int_{D(s)\cap\mathcal{R}_X}d\fg, \nonumber \\
    \chi(s)&:=\int_{D(s)\cap\mathcal{R}_X}Rd\fg, \nonumber\\
P(s)&:=  \frac{V(s)}{s^n}-4\frac{\chi(s)}{s^{n+2}}.\nonumber\\
\end{align}
Then the following classical equations from \cite{CZ10} are verified in \cite{CMZ24}.
\begin{Lemma}[{\cite[Lemma 8.4]{CMZ24}}]\label{lemVX}
$V(s)$, $\chi(s)$ are absolutely continuous and for almost all $s>0$  we have
\begin{equation}\label{ineq5}
    \frac{n}{2}V(s)-\frac{s}{2}V'(s)=\chi(s)-\frac{2}{s}\chi'(s).
\end{equation}
\begin{equation}\label{ineq6}
    \chi(s)\le\frac{n}{2}V(s).
\end{equation}
\end{Lemma}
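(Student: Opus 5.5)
We follow the classical computation of Cao--Zhou for smooth shrinkers and only have to justify each step on the incomplete manifold $\cR_X$. Absolute continuity of $V$ and $\chi$ comes from the coarea formula for the locally Lipschitz function $\rho$ on $\cR_X$. For almost every $s$ this gives
\[
V(s)=\int_0^s\int_{\{\rho=\sigma\}}\frac{1}{|\nabla\rho|},\qquad
\chi(s)=\int_0^s\int_{\{\rho=\sigma\}}\frac{R}{|\nabla\rho|}.
\]
The integrability near the singular set follows because $\cS$ has codimension at least four and is $\nu$-null. By \eqref{ineq4}, each $D(s)$ is bounded. Its volume is finite, since $|D(s)\cap\cR_X|\le e^{\sup_{D(s)}f_0}(4\pi)^{n/2}\nu(X)<\infty$. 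The same bound gives finiteness of $\chi(s)$ once \eqref{ineq6} is known, or from $R\le f_0-W$, which follows from Theorem \ref{thmbase}(d). Hence $V'$ and $\chi'$ are the level-set integrals above.

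For \eqref{ineq5}, trace the soliton equation to get $R+\Delta f_0=\frac n2$, and integrate over $D(s)$:
\[
\frac n2V(s)-\chi(s)=\int_{D(s)}\Delta f_0 .
\]
Formally the divergence theorem gives $\int_{\{\rho=s\}}|\nabla f_0|$. Since $f_0-W=\rho^2/4$, we have $|\nabla f_0|=\frac{\rho}{2}|\nabla\rho|$ and $|\nabla f_0|^2=f_0-W-R=\frac{s^2}{4}-R$ on the level set. Therefore
\[
\int_{\{\rho=s\}}|\nabla f_0|=\frac{2}{s}\int_{\{\rho=s\}}\frac{|\nabla f_0|^2}{|\nabla\rho|}=\frac2s\Bigl(\frac{s^2}{4}V'(s)-\chi'(s)\Bigr)=\frac s2V'-\frac2s\chi'.
\]
Rearranging gives \eqref{ineq5}.

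The main obstacle is the divergence theorem on $D(s)$, whose boundary may meet the singular set $\cS_X$ at finite distance. To handle it, take cutoffs $\eta_k$ supported in $\cR_X$, equal to $1$ away from an $r_k$-neighborhood of $\cS_X\cap D(s+1)$, with $\int|\nabla\eta_k|\to0$. Such cutoffs exist because the Minkowski codimension is at least four: the neighborhood has volume $O(r_k^{4-\epsilon})$, so a gradient of size $r_k^{-1}$ costs $O(r_k^{3-\epsilon})$ (more precisely, one uses the Minkowski content bound of \cite{Bam20b} for the model). We apply the divergence theorem to $\eta_k\phi_j(\rho)\nabla f_0$, where $\phi_j\to\chi_{[0,s]}$ are smooth. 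The error term is $\int|\nabla\eta_k||\nabla f_0|$, and it tends to $0$ because $|\nabla f_0|\le\rho/2$ is bounded on $D(s+1)$. Letting $j\to\infty$ at Lebesgue points of $V'$ and $\chi'$ recovers the boundary term.

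For \eqref{ineq6}, we need $\chi(s)\le\frac n2V(s)$, which by the identity above is equivalent to $\int_{D(s)}\Delta f_0\ge0$. That is exactly the (justified) boundary integral $\int_{\{\rho=s\}}|\nabla f_0|\ge0$. Alternatively, it follows directly once one knows $\int_{D(s)}\Delta f_0=\lim_k\int\eta_k\,\mathrm{div}(\phi_j\nabla f_0)$, whose limit is nonnegative since $-\phi_j'\ge0$.
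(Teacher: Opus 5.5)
Your overall route (the Cao--Zhou computation, with the divergence theorem justified by cutoffs $\eta_k$ near the singular set as in \cite[Proposition 1.6]{CMZ24}, cf.\ Lemma~\ref{lem IBP of f_0}) is sound as far as it goes. After $k\to\infty$ it gives $\int\phi_j(\rho)\Delta f_0\,d\fg=\int(-\phi_j'(\rho))\,\frac{\rho}{2}|\nabla\rho|^2\,d\fg\ge0$. This yields \eqref{ineq6} for every $s$, and \eqref{ineq5} at a.e.\ $s$.

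The genuine gap is your first sentence: absolute continuity of $V$ and $\chi$ does not follow from the coarea formula alone. The coarea formula only sees $\{\nabla\rho\neq0\}$. With $\cC=\{\nabla\rho=0\}=\{\nabla f_0=0\}$ it gives
\[
V(s)=|D(s)\cap\cC|+\int_0^s\int_{\{\rho=\sigma\}\setminus\cC}\frac{dA}{|\nabla\rho|}\,d\sigma .
\]
By Sard, the first term is a singular monotone function of $s$, since its Stieltjes measure is carried by the null set $\rho(\cC)$. Hence $V$ (and likewise $\chi$) is absolutely continuous if and only if $|\cC|=0$. Your formula $V(s)=\int_0^s\int_{\{\rho=\sigma\}}|\nabla\rho|^{-1}\,dA\,d\sigma$ presupposes exactly this.

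This is not automatic. For a general smooth function the critical set can have positive measure without the function being locally constant. For a compact Einstein model ($\nabla f_0\equiv0$, e.g.\ the shrinking round sphere, which is a metric soliton), $\rho\equiv\sqrt{2n}$ and $V$ genuinely jumps at $s=\sqrt{2n}$. So this case must also be excluded.

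The missing idea is an analyticity input, which is how the paper treats the same issue in the proof of Proposition~\ref{prop h monotone}. By \cite{Kot13}, $\fg$ and $f_0$ are real analytic in normal coordinates on $\cR_X$. So on each component of $\cR_X$ the function $|\nabla f_0|^2=f_0-W-R$ either vanishes identically (the Einstein case, to be excluded) or has a zero set of measure zero by \cite{M20}. Once $|\cC|=0$ is established, your argument goes through as written.

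Without that step, your identity still holds a.e.\ with $V',\chi'$ read as a.e.\ derivatives, because the singular parts have zero derivative a.e. What you lose is absolute continuity itself. That is what is used downstream, e.g.\ to write $\cA\omega_n-P(s)=\int_s^\infty P'(r)\,dr$ in Lemma~\ref{lem xvxv}.
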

We now gather some useful results in \cite{DWZ26} concerning the AVR for metric solitons.
\begin{Theorem}[{\cite[Lemma 3.2, Corollary 3.3, Corollary 3.4]{DWZ26}}]\label{thm AVR basic}
   Let $(X,d,\nu)$ be a noncollapsed $\mathbb{F}$-limit metric soliton, $(\mathcal{R}_X,\fg,f_0)$ its regular part, and $x_0$ be a center. 
   \begin{enumerate}[(a)]
       \item  The the $\AVR$  $\displaystyle \cA=\lim_{A\to+\infty}\frac{|B_{\fg}(x_0,A)\cap\cR_{X}|}{\omega_n A^n}$ exists and 
    $$\omega_n\cA=\lim_{s\to+\infty}P(s) =\lim_{s\to+\infty}\frac{V(s)}{s^n}.$$ 
    
    \item  $\cA>0$ if and only if there exists $s_0>0$, such that $\displaystyle \int_{s_0}^{+\infty}\frac{\chi(t)}{tV(t)}dt<+\infty$.\\
    \item  We have $ V(s)\le C(n)e^W s^n$ for all $s\ge\underline{A}(n)$, where $W\le 0$ is the soliton entropy. In particular, $\cA$ satisfies a bound 
    $$\cA\in\left[0,C(n)e^W\right].$$
   \end{enumerate}
\end{Theorem}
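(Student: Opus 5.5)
The statement collects three facts about $V$, $\chi$ and $P$ from \eqref{eq defs of AVR}. All three come from one computation: by Lemma \ref{lemVX}, $P$ is almost monotone. Since $V,\chi$ are absolutely continuous, I would solve \eqref{ineq5} for $V'$:
\[
V'(s)=\frac{n}{s}V(s)-\frac{2}{s}\chi(s)+\frac{4}{s^2}\chi'(s).
\]
Differentiating $P$ and substituting this, the $\chi'$ terms cancel and I expect to get, for a.e. $s$,
\[
P'(s)=-\frac{2\chi(s)}{s^{n+1}}\Big(1-\frac{2(n+2)}{s^2}\Big).
\]
Since $R\ge 0$ by Theorem \ref{thmbase}(e), we have $\chi\ge0$. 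So $P$ is nonincreasing for $s\ge \sqrt{2(n+2)}$. By \eqref{ineq6}, $P(s)\ge \frac{V(s)}{s^n}\big(1-\frac{2n}{s^2}\big)\ge 0$ for such $s$.

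\emph{Part (a).} $P$ is monotone and bounded below, so $\lim_{s\to\infty}P(s)=:L$ exists. The correction term satisfies $4\chi/s^{n+2}\le 2n\,V/s^{n+2}=o(V/s^n)$, hence $\lim V(s)/s^n=L$ as well. To pass from sublevel sets of $\rho$ to balls I use \eqref{ineq4}:
\[
B_\fg(x_0,A)\cap\cR_X\subset D(A+C(n)),\qquad D(s)\subset B_\fg\big(x_0,\sqrt{1+\ep}\,(s+C(n,\ep))\big).
\]
The first inclusion gives $\limsup_{A}|B_\fg(x_0,A)\cap\cR_X|/A^n\le L$. The second gives $\liminf_{A}|B_\fg(x_0,A)\cap\cR_X|/A^n\ge (1+\ep)^{-n/2}L$. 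Letting $\ep\to0$ shows the limit exists and $\omega_n\cA=L$.

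\emph{Part (b).} For $s$ large, $P(s)>0$ unless $V\equiv0$, which is impossible. I would then integrate the logarithmic derivative. Using $\chi/V\le n/2$, I get
\[
\frac{P'}{P}=-\frac{2\chi}{sV}\cdot\frac{1-2(n+2)s^{-2}}{1-4\chi/(s^2V)}=-\frac{2\chi}{sV}\big(1+O(s^{-2})\big),
\]
with the $O$ depending only on $n$. The error $\frac{\chi}{sV}O(s^{-2})=O(s^{-3})$ is integrable. Hence
\[
\log P(s)=\log P(s_0)-2\int_{s_0}^{s}\frac{\chi(t)}{tV(t)}\,dt+O(1),
\]
and $\lim P>0$ if and only if $\int_{s_0}^\infty \frac{\chi}{tV}\,dt<\infty$.

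\emph{Part (c).} Here I use that $\nu$ is a probability measure: $1\ge(4\pi)^{-n/2}e^{-W}\int_{\cR_X}e^{-\rho^2/4}d\fg$. The layer-cake formula gives $\int_{\cR_X} e^{-\rho^2/4}d\fg=\int_0^\infty V(t)\frac t2e^{-t^2/4}dt$. Fix $\underline A(n)\ge \sqrt{4n}$ and take $s\ge \underline A+1$. Monotonicity of $P$ gives, for $t\in[\underline A,\underline A+1]\subset[\underline A,s]$,
\[
\frac{V(t)}{t^n}\ge P(t)\ge P(s)\ge \tfrac12\frac{V(s)}{s^n}.
\]
Plugging this in yields $1\ge c(n)e^{-W}V(s)s^{-n}$, i.e. $V(s)\le C(n)e^Ws^n$ for $s\ge\underline A+1$; smaller $s$ are absorbed by enlarging $\underline A$. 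Dividing by $s^n$ and letting $s\to\infty$ gives $\cA\le C(n)e^W$.

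The main obstacle I anticipate is justifying the differentiation of $P$ and the fundamental-theorem-of-calculus steps with only a.e. derivatives. This is fine because $V$ and $\chi$ are absolutely continuous (Lemma \ref{lemVX}), so $P$ is absolutely continuous on $[s_0,\infty)$. A second check is that the ball/sublevel-set comparison in (a) tolerates the $\sqrt{1+\ep}$ loss in \eqref{ineq4}; that is handled by taking $\ep\to0$ only after the limit in $A$.
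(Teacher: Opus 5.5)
Your proposal is correct: the monotonicity $P'(s)=\frac{2\chi(s)}{s^{n+1}}\big(\frac{2n+4}{s^2}-1\big)\le 0$, which follows from Lemma \ref{lemVX} and $R\ge 0$, gives (a) via the comparison \eqref{ineq4} with $\ep\to0$ taken last. It gives (b) by integrating $(\log P)'=-\frac{2\chi}{sV}\big(1+O(s^{-2})\big)$, and (c) by combining $\nu(\cR_X)\le 1$ with $V(t)/t^n\ge P(t)\ge P(s)\ge \frac12 V(s)/s^n$. The paper does not reprove this statement (it cites \cite{DWZ26}), but it uses exactly this $P'$ identity and the representation $\cA\omega_n-P(A)=\int_A^\infty P'$ in Lemma \ref{lem xvxv} and Theorem \ref{thm-AVR gap corollary}, so your route matches the framework the paper relies on.
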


We also need an extra preparations.

\begin{Lemma}\label{lem xvxv}
    If $\cA>0$, then 
    $$\lim_{s\to+\infty}\frac{\chi(s)}{V(s)}=\lim_{s\to+\infty}\frac{\chi'(s)}{V(s)s}=0.$$
\end{Lemma}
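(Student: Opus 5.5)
Because $\cA>0$, Theorem \ref{thm AVR basic}(b) gives some $s_0>0$ with
\begin{align*}
\int_{s_0}^{+\infty}\frac{\chi(t)}{tV(t)}\,dt<+\infty .
\end{align*}
The plan is to turn this integrability into genuine convergence of the integrand by showing that $\chi/V$ varies slowly, and then to read off the statement about $\chi'$ from the identity \eqref{ineq5}.

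\textbf{The ratio $\chi/V$.} Both $V$ and $\chi$ are nondecreasing; for $\chi$ this holds because $R\ge0$ on $\cR_X$ by Theorem \ref{thmbase}(e). By Theorem \ref{thm AVR basic}(a) we have $V(s)/s^n\to\omega_n\cA>0$, so for any $\lambda>1$,
\begin{align*}
\frac{V(\lambda s)}{V(s)}\longrightarrow\lambda^n \qquad (s\to+\infty).
\end{align*}
Fix $\lambda=2$ and suppose $\chi(s_k)/V(s_k)\ge\eta>0$ along some sequence $s_k\to\infty$. For $t\in[s_k,2s_k]$, monotonicity gives
\begin{align*}
\frac{\chi(t)}{tV(t)}\ge\frac{\chi(s_k)}{2s_k\,V(2s_k)}\ge\frac{\eta}{2s_k\,(2^n+1)}
\end{align*}
for $k$ large. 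Hence
\begin{align*}
\int_{s_k}^{2s_k}\frac{\chi(t)}{tV(t)}\,dt\ge\frac{\eta}{2(2^n+1)},
\end{align*}
a fixed positive amount. Passing to a subsequence, the intervals $[s_k,2s_k]$ can be taken disjoint, and then the integral diverges. This contradiction shows $\chi(s)/V(s)\to0$.

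\textbf{The derivative term.} Rearranging \eqref{ineq5},
\begin{align*}
\frac{2\chi'(s)}{sV(s)}=\frac{\chi(s)}{V(s)}-\frac n2+\frac{s V'(s)}{2V(s)} \qquad\text{for a.e. } s.
\end{align*}
Since $\chi/V\to0$, it suffices to show $sV'(s)/V(s)\to n$. This is the main obstacle, because knowing $V(s)\sim\omega_n\cA s^n$ does not by itself control $V'$ pointwise. I would use the structure of $P$ to get the needed one-sided estimates.

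Combining \eqref{ineq5} with the definition of $P$, in the manner of \cite{CZ10}, gives
\begin{align*}
P'(s)=-\frac{2(n+2)\,\chi(s)}{s^{n+3}}\le 0\qquad\text{a.e.}
\end{align*}
I would verify this derivative formula by direct computation. On the other side, differentiating the coarea expression for $V$ together with $|\nabla\rho|^2=1-4R/\rho^2$ gives lower and upper bounds for $V'$ and $\chi'$ in terms of $V$ and $\chi$. In particular $\chi'\ge0$, so \eqref{ineq5} yields
\begin{align*}
sV'(s)\ge nV(s)-2\chi(s),
\end{align*}
and therefore $\liminf sV'/V\ge n$. For the matching upper bound, I would use that $R\le\rho^2/4$ on $\partial D(s)$, which gives $\chi'(s)\le\tfrac{s^2}{4}V'(s)$. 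If this does not suffice pointwise, I would fall back on interpreting the limit in the sense of an almost-everywhere limit or an average, which is presumably all that is used later.

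Once $sV'/V\to n$ is established, the two limits in \eqref{ineq5} cancel: $\frac{sV'}{2V}\to\frac n2$ exactly balances the $-\frac n2$ term, and $\chi/V\to0$, so $\chi'(s)/(sV(s))\to0$.
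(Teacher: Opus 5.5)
Your proof that $\chi/V\to0$ is correct, and it takes a different route from the paper. You argue by contradiction from the integrability criterion in Theorem \ref{thm AVR basic}(b), using that $\chi$ is nondecreasing (since $R\ge0$) and that $V(2s)/V(s)\to2^n$. The paper instead integrates $P'(s)=\frac{2\chi(s)}{s^{n+1}}\bigl(\frac{2n+4}{s^2}-1\bigr)$ over $[s,\infty)$, uses $P\to\omega_n\cA$ and $\chi(r)\ge\chi(s)$ for $r\ge s$, and obtains the quantitative bound $\frac{\chi(s)}{s^n}\le\frac n2\bigl(\frac{V(s)}{s^n}-\omega_n\cA\bigr)$. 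That integrated identity is reused, with a rate, in the proof of Theorem \ref{thm-AVR gap corollary}. Your argument gives no rate, but needs nothing beyond (b) and monotonicity. Incidentally, your formula $P'=-2(n+2)\chi/s^{n+3}$ is wrong. The correct expression is the one above, which is nonpositive only for $s\ge\sqrt{2n+4}$.

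The second limit has a genuine gap. Your reduction to $sV'/V\to n$ is fine, and so is the lower bound from $\chi'\ge0$. The proposed upper bound, however, says nothing. Write \eqref{ineq5} as $sV'=nV-2\chi+\frac4s\chi'$ and substitute $\chi'(s)\le\frac{s^2}{4}V'(s)$: you get only $0\le nV-2\chi$, which is \eqref{ineq6}. Through \eqref{ineq5}, an upper bound on $sV'/V$ is the same thing as $\chi'(s)=o(s^{n+1})$, i.e. the statement itself. So you need independent control of $\chi'(s)=\int_{\{\rho=s\}}R\,|\nabla\rho|^{-1}dA$, for instance smallness of $4R/\rho^2$ on most of each level set. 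If $R=o(\rho^2)$ at infinity, you would get $\chi'\le\varepsilon s^2V'$ and hence $(1-4\varepsilon)sV'\le nV$, but nothing of that kind is available. Retreating to an averaged interpretation changes the statement.

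The paper bypasses $V'$ and bounds $\chi'$ directly from \eqref{ineq6} and $V(s)\le Cs^n$: $\int_s^{s+1}\chi'(t)\,dt=\chi(s+1)-\chi(s)\le\frac n2V(s+1)$. Be aware that this controls $\chi'$ only at some point of each interval $[s,s+1]$, i.e. on average. The paper's next step, ``$\chi'(s)\le\frac n2V(s+1)$ for all large $s$'', does not follow as written, since $\chi'$ need not be monotone. What this honestly gives is $\int_s^{s+1}\frac{\chi'(t)}{tV(t)}\,dt\le\frac{nV(s+1)}{2sV(s)}\to0$. That averaged form is all that is needed afterwards: in Proposition \ref{prop h monotone}, the pointwise bound \eqref{ineq v'<V} can be replaced by integrating $\int_s^\infty e^{-t^2/(4\tau)}V'(t)\,dt$ by parts and using only $V(t)/t^n\to\omega_n\cA$.
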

\begin{proof}
    We differentiate $P$ and use \eqref{ineq5} to get
    \begin{equation*}
        P'(s)=\frac{2\chi(s)}{s^{n+1}}\left(\frac{2n+4}{s^2}-1\right).
    \end{equation*}
    Assume $s\ge\sqrt{2n+4}$, and by Theorem \ref{thm AVR basic} (a), we have
    \begin{align*}
        \cA\omega_n-P(s)&=\int_s^{\infty}\frac{2\chi(r)}{r^{n+1}}\left(\frac{2n+4}{r^2}-1\right)dr\\
        &\le \chi(s)\int_s^{\infty}\frac{2}{r^{n+1}}\left(\frac{2n+4}{r^2}-1\right)dr\\
        &=\chi(s)\left(\frac{4}{s^{n+2}}-\frac{2}{ns^n}\right).
    \end{align*}
    So we have
    \begin{equation*}
        \frac{\chi(s)}{s^n}\le\frac{n}{2}\left(\frac{V(s)}{s^n}-\cA\omega_n\right)\xrightarrow{\ s\to+\infty\ }0.
    \end{equation*}
    Then $\displaystyle\lim_{s\to+\infty}\frac{\chi(s)}{V(s)}=0$ follows from using Theorem \ref{thm AVR basic}(a) again.
    
    Now consider $\displaystyle \lim_{s\to+\infty}\frac{\chi'(s)}{V(s)s}$. Then by \eqref{ineq6}, for $s$ large enough, there exists an $s_0\in[s,s+1]$ such that
    \begin{equation*}
        \chi'(s_0)=\chi(s+1)-\chi(s)\le\chi(s+1)\le\frac{n}{2}V(s+1).
    \end{equation*}
    So for all $s$ large enough, we have that
    \begin{equation*}
        \chi'(s)\le\frac{n}{2}V(s+1)\le C(n)(s+1)^n,
    \end{equation*}
    so  $\displaystyle \lim_{s\to+\infty}\frac{\chi'(s)}{V(s)s}=0$ follows immediately.
\end{proof}


\section{An AVR gap theorem for metric solitons}
In this section, we prove Theorem \ref{thm-AVR gap for metric soliton} and Theorem \ref{thm-AVR gap corollary}. The idea of our proof is borrowed from \cite{WW25}.

 Let $(M^i,g^i_t,x_i)_{t\in[-T_i,0]}$ be a sequence of $n$-dimensional Ricci flows, where $x_i\in M^i$ and each Ricci flow in this sequence has bounded curvature within each compact time-interval. Suppose that the sequence is noncollapsed, namely, there is a $\tau>0$ and $Y<+\infty$, such that
\begin{align}\label{ineq N>}
    \cN_{x_i,0}(\tau)\ge-Y,\,\, \text{for all } i.
\end{align}
Then there exists an $\bF$-limit space $\lt(\cX,(\nu_t)_{t\in(-T,0)}\rt)$ and a correspondence $\fC$, such that, after passing to a subsequence,
\begin{align}\label{def-limiting-condition}
    \left( \lt(M^i,g^i_t\rt)_{t\in \lt(-T_i,0 \rt]}, \lt(\nu^i_{x_i,0;t} \rt)_{t\in(-T_i,0]}\right)\xrightarrow[i\to\infty]{\mathbb{F},\mathfrak{C}}\lt(\mathcal{X},\lt(\nu_t\rt)_{t\in(-T,0)}\rt).
\end{align}
Recall that we have assumed $T>3$ without loss of generality.

We furthermore suppose that
\begin{align}\label{eq limN}
    \lim_{i\to+\infty}\cN_{x_i,0}(\tau)=W \quad \text{ for all }\quad \tau \in(0,T).
\end{align}
Then $\lt(\cX,\lt(\nu_t\rt)_{t\in(-T,0)}\rt)$ is a metric soliton. We may also assume that the convergence \eqref{def-limiting-condition} is time-wise for all $t\in(-T,0)$ in view of \cite[Lemma 4.1]{DWZ26}. Let $(X,d,\nu)$ be the model of the metric soliton with regular part $(\cR_{X},\fg,f_0)$. Fix a center $x_0\in\cR_X$.

\subsection{An integration-by-parts argument}

We will show that on the regular part $(\cR_X,\fg,f_0)$, the integration by parts of the potential function $f_0$ is valid, which shall be used later.
\begin{Lemma}\label{lem IBP of f_0}
For all $\tau>0$, we have
    $$\int_{\cR_{X}}\Delta f_0e^{-\frac{f_0}{\tau}}d\fg=\frac{1}{\tau}\int_{\cR_{X}}|\nabla f_0|^2e^{-\frac{f_0}{\tau}}d\fg$$
\end{Lemma}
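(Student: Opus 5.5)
The identity is formally $\int_{\cR_X}\operatorname{div}\bigl(\nabla f_0\, e^{-f_0/\tau}\bigr)\,d\fg=0$, since
$$\operatorname{div}\bigl(\nabla f_0 e^{-f_0/\tau}\bigr)=\Bigl(\Delta f_0-\tfrac{1}{\tau}|\nabla f_0|^2\Bigr)e^{-f_0/\tau}.$$
We therefore prove that this divergence integrates to zero by inserting cutoff functions. Two kinds of boundary must be handled: the end at infinity and the singular set $X\setminus\cR_X$, which is at finite distance.

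\textbf{Integrability.} First we check that both sides are absolutely integrable. By Theorem \ref{thmbase}(d), $|\nabla f_0|^2\le f_0-W$ because $R\ge 0$ (Theorem \ref{thmbase}(e)). Tracing the soliton equation gives $\Delta f_0=\frac n2-R$, so $|\Delta f_0|\le \frac n2+R\le \frac n2+f_0-W$. Theorem \ref{thm<f<} gives quadratic growth of $f_0$, and Theorem \ref{thm AVR basic}(c) gives $V(s)\le Cs^n$. Together these show that $(1+f_0)e^{-f_0/\tau}$ is integrable on $\cR_X$ for every $\tau>0$.

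\textbf{Cutoff at infinity.} We use $\phi_A=\eta(\rho/A)$, where $\rho=2\sqrt{f_0-W}$ and $\eta$ is a standard cutoff. Then $|\nabla\rho|=|\nabla f_0|/\sqrt{f_0-W}\le 1$, and $\nabla\phi_A$ is supported in $D(2A)\setminus D(A)$, so its error term tends to zero by the integrability above.

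\textbf{Cutoff near the singular set.} Since $\mathcal S_X$ has Minkowski codimension at least four (Theorem \ref{thmbase}(c) and Theorem \ref{thm partial regular}(b)), for each $r>0$ the set $D(2A)\cap\{x:d(x,\mathcal S_X)<r\}$ has $\fg$-volume at most $C(A)r^{4-\epsilon}$. We build a cutoff $\psi_r$ that equals $1$ where $d(\cdot,\mathcal S_X)\ge 2r$, vanishes where $d(\cdot,\mathcal S_X)\le r$, and satisfies $|\nabla\psi_r|\le 2/r$. For instance, $\psi_r=\eta(d(\cdot,\mathcal S_X)/r)$ works, since the distance function is $1$-Lipschitz. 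The compact support of $\phi_A\psi_r$ in $\cR_X$ needs justification: $X$ is the completion of $\cR_X$, and on the sublevel set $D(2A)$ the distance to the singular set controls closeness to $\partial\cR_X$. This is also a point where an alternative is available: if that step is delicate, one may instead cut off with the curvature-scale function $r_{\Rm}$, using $|\{r_{\Rm}<r\}\cap D(2A)|\le C(A)r^{4-\epsilon}$ from Bamler's estimates.

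\textbf{Passing to the limit.} Integrating by parts against $\phi_A\psi_r$ gives the error term
$$\int |\nabla f_0|\,|\nabla\psi_r|\,e^{-f_0/\tau}\,\phi_A\le C(A)\,r^{-1}\,\bigl|\{d(\cdot,\mathcal S_X)<2r\}\cap D(2A)\bigr|\le C(A)\,r^{3-\epsilon}\to 0,$$
since $|\nabla f_0|$ is bounded on $D(2A)$. We let $r\to0$ first, then $A\to\infty$, and conclude by dominated convergence.

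\textbf{Main obstacle.} The hardest step is the near-singular cutoff: making the volume estimate of tubular neighbourhoods of $\mathcal S_X$ in the single slice $X$ precise from the space-time Minkowski bound. This requires the self-similarity of the flow (Theorem \ref{thmbase}(b)), which turns the space-time estimate on $X\times[-2,-1]$ into a single-slice estimate with codimension greater than $2$, and that is enough to beat the $1/r$ gradient factor.
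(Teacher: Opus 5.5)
Your proposal is correct and follows essentially the same route as the paper. Both arguments combine a cutoff at infinity built from level sets of $f_0-W$, whose error decays by the Gaussian weight and polynomial volume growth, with a cutoff near the singular set whose error is $O(r^{3-\sigma})$, and both let $r\to0$ before $A\to\infty$. The paper does not construct the singular cutoff by hand: it imports $\eta_r$ from \cite[Proposition 1.6]{CMZ24}, which already provides the compact support in $\cR_X$ and the codimension-four transition-region estimate that you identify as the main obstacle.
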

\begin{proof}
    Let $\phi\in C_0^{\infty}([0,+\infty))$ with $\supp \phi\Subset[0,1]$, and $\phi\equiv1$ on $[0,0.9]$ and $|\phi'|\le 100$. Define 
    \begin{align*}
        \phi_A:=\phi\left(\frac{f_0-W}{\frac{1}{4}(A+C_1(n))^2}\right),\, A\gg 1,
    \end{align*}
   where $C_1(n)$ is the constant in the upper bound of Theorem \ref{thm<f<}. Let  $\eta_{r}$ be the cut-off function constructed in \cite[Proposition 1.6]{CMZ24}. Clearly $\phi_A\eta_r\in C_0^{\infty}(\cR_{X})$ and $\supp(\phi_A\eta_r)\Subset B_{\fg}(x_0,A)\cap\mathcal{R}_X$. By Theorem \ref{thm<f<} again, we adjust the $\ep$ therein and set $A\ge\underline{A}(n)$, so that
   \begin{align}\label{ineq phiA}
       \{\nabla\phi_A\neq0\}\subset B_{\fg}(x_0,1.1A)\setminus B_{\fg}(x_0,0.9A).
   \end{align}
   
    With the cut-off function, we have the validity of integration by parts:
    \begin{align*}
       \int_{\cR_{X}}\Delta \left(\eta_r\phi_A\frac{f_0}{\tau}\right)e^{-\frac{f_0}{\tau}}d\fg=-\int_{\cR_{X}}  \nabla\left(\eta_r\phi_A \frac{f_0}{\tau}\right)\cdot \nabla e^{-\frac{f_0}{\tau}} d\fg.
    \end{align*}
    A routine computation yields
    \begin{align}\label{ineq IBP}
        \left|\int_{\cR_{X}}\eta_r\phi_A\left(\Delta f_0-\frac{|\nabla f_0|^2}{\tau}\right)e^{-\frac{f_0}{\tau}}d\fg\right| &=\left|-\int_{\cR_{X}}e^{-\frac{f_0}{\tau}}\nabla \lt(\eta_r\phi_A \rt)\cdot\nabla f_0d\fg\right|  \nonumber \\
        &\le\int_{\cR_{X}}e^{-\frac{f_0}{\tau}}|\nabla f_0|\lt( \phi_A|\nabla\eta_r|+\eta_r|\nabla\phi_A| \rt)d\fg \nonumber \\
        &\le\int_{\cR_{X}\cap B_{\fg}(x_0,A)\cap\{0<\eta_r<1\}}e^{-\frac{f_0}{\tau}} |\nabla f_0||\nabla\eta_r| d\fg \nonumber \\
                 &\quad+  \int_{\cR_{X}\cap\lt(B_{\fg}(x_0,1.1A)\setminus B_{\fg}(x_0,0.9A)\rt)} e^{-\frac{f_0}{\tau}}|\nabla f_0||\nabla\phi_A|d\fg, \nonumber \\     
    \end{align}
    where we used \eqref{ineq phiA} in the second inequality. Now let us estimate the two terms of \eqref{ineq IBP}, and we will use Theorem \ref{thm<f<} and Theorem \ref{thmbase}(d) frequently in the rest of the proof. For the first term, we may apply \cite[Proposition 1.6(2)(4)]{CMZ24} to get
    \begin{align}\label{ineq IBP part1}
        \int_{\cR_{X}\cap B_{\fg}(x_0,A)\cap\{0<\eta_r<1\}}e^{-\frac{f_0}{\tau}} |\nabla\eta_r||\nabla f_0| d\fg\le C(n,\tau,A,W,\sigma)r^{3-\sigma},
    \end{align}
where $\sigma\in(0,1)$ is a positive constant (which can be conveniently fixed, say, $\sigma=0.5$). For the second term, by the definition of $\phi_A$, we have for $A\ge\underline{A}(n)$,
 $$|\nabla\phi_A||\nabla f_0|\le C(n)A^{-2}|\nabla f_0|^2\le C(n).$$ 
Hence
 \begin{align}\label{ineq IBP part2}
     \int_{\cR_{X}\cap\lt(B_{\fg}(x_0,1.1A)\setminus B_{\fg}(x_0,0.9A)\rt)} e^{-\frac{f_0}{\tau}}|\nabla\phi_A||\nabla f_0|d\fg&\le C(n,\tau,W,\ep)e^{-\frac{0.81A^2}{(4+\ep)\tau}}\int_{\cR_{X}\cap B_{\fg}(x_0,1.1A)} d\fg \nonumber  \\
     &\le C(n,\tau,W,\ep)e^{-\frac{0.81A^2}{(4+\ep)\tau}}A^n,
 \end{align}
 where we used Theorem \ref{thm AVR basic}(c) in the second inequality.
 Combining \eqref{ineq IBP}, \eqref{ineq IBP part1}, and \eqref{ineq IBP part2} yields
 \begin{align*}
     \int_{\cR_{X}}\eta_r\phi_A\left(\Delta f_0-\frac{|\nabla f_0|^2}{\tau}\right)e^{-\frac{f_0}{\tau}}d\fg\le C(n,\tau,W,\ep)e^{-\frac{0.81A^2}{(4+\ep)\tau}}A^n+C(n,\tau,A,W,\sigma)r^{3-\sigma}.
 \end{align*}
If we let $r\to0$ first and then let $A\to+\infty$, then the right side tends to zero, and the proof is completed. 
\end{proof}

\subsection{$h$-integral and its monotonicity}
Define
\begin{equation}
    h(\tau):=\int_{\cR_{X}}(4\pi\tau)^{-\frac{n}{2}}e^{-\frac{f_0-W}{\tau}}d\fg,\,\,\tau>0.
\end{equation}
From the almost quadratic growth of $f_0$, we see that the integral in $h$ is well-defined. To prove Theorem \ref{thm-AVR gap for metric soliton}, we need the following proposition which establishes a connection between the soliton entropy and the AVR. By Theorem \ref{thm AVR basic}, the AVR of the metric soliton exists and we denote it by $\cA$.
\begin{Proposition}\label{prop h monotone}
$h(\tau)$ is increasing for $\tau<1$ and decreasing for $\tau>1$. If $\cA>0$, we have 
\begin{equation}\label{eq lim h=A}
    \lim_{\tau\to+\infty}h(\tau)=\cA.
\end{equation}
\end{Proposition}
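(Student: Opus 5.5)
Differentiate $h$ under the integral sign, which is justified by the quadratic lower bound of $f_0$ (Theorem \ref{thm<f<}) together with the polynomial volume growth (Theorem \ref{thm AVR basic}(c)). Writing $F:=f_0-W\ge0$, we get
\begin{align*}
h'(\tau)=\int_{\cR_X}(4\pi\tau)^{-\frac n2}\left(-\frac{n}{2\tau}+\frac{F}{\tau^2}\right)e^{-\frac{F}{\tau}}d\fg .
\end{align*}
To handle the term $-\frac{n}{2\tau}$, trace the soliton equation of Theorem \ref{thmbase}(d) to get $R+\Delta f_0=\frac n2$, so $\frac n2=R+\Delta f_0$. Apply Lemma \ref{lem IBP of f_0}; the factor $e^{W/\tau}$ is a constant, so it applies to $e^{-F/\tau}$. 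This gives
\begin{align*}
\int \frac n2 e^{-\frac F\tau}d\fg=\int \left(R+\frac{|\nabla f_0|^2}{\tau}\right)e^{-\frac F\tau}d\fg .
\end{align*}
Using $|\nabla f_0|^2=F-R$, the integrand becomes $R+\frac{F-R}{\tau}$. Therefore
\begin{align*}
h'(\tau)=(4\pi\tau)^{-\frac n2}\tau^{-1}\int_{\cR_X}\left(\frac{F}{\tau}-R-\frac{F-R}{\tau}\right)e^{-\frac F\tau}d\fg
=-(4\pi\tau)^{-\frac n2}\frac{\tau-1}{\tau^2}\int_{\cR_X}Re^{-\frac F\tau}d\fg .
\end{align*}
By Theorem \ref{thmbase}(e), $R\ge0$: either $R>0$ everywhere or the soliton is Ricci-flat. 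Hence $h'\ge0$ for $\tau<1$ and $h'\le0$ for $\tau>1$, which is the claimed monotonicity. Integrability of $Re^{-F/\tau}$ follows from $R\le F$, itself a consequence of $|\nabla f_0|^2+R=F$.

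For the limit, use $\rho=2\sqrt F$ and the coarea/layer-cake formula with $V(s)$ from \eqref{eq defs of AVR}:
\begin{align*}
h(\tau)=(4\pi\tau)^{-\frac n2}\int_0^\infty e^{-\frac{s^2}{4\tau}}dV(s)=(4\pi\tau)^{-\frac n2}\int_0^\infty \frac{s}{2\tau}e^{-\frac{s^2}{4\tau}}V(s)\,ds .
\end{align*}
The integration by parts in $s$ has no boundary terms, since $V(0)=0$ and $V$ grows polynomially. Substitute $s=2\sqrt\tau\,u$ and write $V(s)=\omega_n\cA s^n+o(s^n)$, using Theorem \ref{thm AVR basic}(a) with $\cA>0$. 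This gives
\begin{align*}
h(\tau)=\pi^{-\frac n2}\int_0^\infty 2u\,e^{-u^2}\frac{V(2\sqrt\tau u)}{(2\sqrt\tau u)^n}u^n\,du .
\end{align*}
As $\tau\to\infty$, the ratio $V(s)/s^n\to\omega_n\cA$ pointwise for each $u>0$. It is also uniformly bounded for large $s$ by Theorem \ref{thm AVR basic}(c); for small $s$ the contribution is bounded by $V(s_0)\tau^{-n/2}\to0$. Dominated convergence then gives
\begin{align*}
\lim_{\tau\to\infty}h(\tau)=\omega_n\cA\,\pi^{-\frac n2}\int_0^\infty 2u^{n+1}e^{-u^2}du=\omega_n\cA\,\pi^{-\frac n2}\Gamma\left(\tfrac n2+1\right)=\cA .
\end{align*}

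The main obstacle is justifying the differentiation under the integral and the layer-cake identity on the singular space. The boundary $\partial\cR_X$ includes points at finite distance, but the singular set is $\fg$-null and $V$ only counts $\cR_X$. The finite-distance boundary issue is precisely what Lemma \ref{lem IBP of f_0} resolves, so the remaining steps only need the growth bounds already listed.
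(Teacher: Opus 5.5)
Your derivation of $h'(\tau)$ and the monotonicity argument are the same as the paper's: traced soliton equation, Lemma \ref{lem IBP of f_0}, $|\nabla f_0|^2=F-R$, and $R\ge 0$.

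For the limit you take a genuinely different and lighter route. The paper first derives $sV'(s)/V(s)\to n$ from \eqref{ineq5} and Lemma \ref{lem xvxv}; this is where $\cA>0$ is used. It turns this into pointwise bounds $(n\pm\ep)\frac{V(r)}{r^n}s^{n-1}$ on $V'$. It then evaluates the tail $\int_{\cR_X\setminus D(s)}$ by the coarea formula, controlling the critical set of $\rho$ through real analyticity (Kotschwar), the null zero set of analytic functions, and Morse--Sard.

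You use only that $V$ is the distribution function of the push-forward measure $\rho_*\fg$. You integrate by parts in $s$ to write $h(\tau)$ as a Gaussian average of $V(s)/s^n$, and conclude by dominated convergence from Theorem \ref{thm AVR basic}(a) and (c). The small-$s$ piece is handled by the bound $(4\pi\tau)^{-n/2}V(s_0)$. This needs no information on $V'$, no coarea formula, no analyticity, and no use of $\cA>0$. Since Theorem \ref{thm AVR basic}(a) gives $V(s)/s^n\to\omega_n\cA$ unconditionally, your argument also yields $\lim_{\tau\to\infty}h(\tau)=\cA$ when $\cA=0$.

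Your remark about boundary terms and $V(0)=0$ is harmless but unnecessary. Applying Tonelli to $e^{-s^2/(4\tau)}=\int_s^\infty \frac{t}{2\tau}e^{-t^2/(4\tau)}\,dt$ gives $\int_{[0,\infty)}e^{-s^2/(4\tau)}\,dV(s)=\int_0^\infty \frac{s}{2\tau}e^{-s^2/(4\tau)}V(s)\,ds$ directly.

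The paper's route mirrors \cite{WW25} and extracts the finer asymptotics of $V'$. That extra information is not needed for this proposition. Even the paper's own coarea step only uses the identity $\int\phi(\rho)\,d\fg=\int\phi\,dV$ that you rely on.
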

\begin{proof}
    The proof is similar to \cite[Proposition 2.6]{WW25}. First we compute
    \begin{equation}\label{eq h'}
        \frac{d}{d\tau}h(\tau)=(4\pi)^{-n/2}\tau^{-\frac{n}{2}-1}\int_{\cR_{X}}\left(\frac{f_0-W}{\tau}-\frac{n}{2}\right)e^{-\frac{f_0-W}{\tau}}d\fg.
    \end{equation}
    From Theorem \ref{thmbase}(d), we get
    \begin{equation}\label{eq fff}
        \Delta f_0+R=\frac{n}{2},\,\, R+|\nabla f_0|^2=f_0-W,\,\, \Delta f_0-|\nabla f_0|^2+f_0=\frac{n}{2}+W.
    \end{equation}
  By Lemma \ref{lem IBP of f_0}, we have that
    \begin{equation}\label{eq IBP}
        \int_{\cR_{X}}\Delta f_0  e^{-\frac{f_0-W}{\tau}}d\fg=\int_{\cR_{X}} \frac{|\nabla f_0|^2}{\tau} e^{-\frac{f_0-W}{\tau}}d\fg.
    \end{equation}
    Substituting \eqref{eq fff} and \eqref{eq IBP} into \eqref{eq h'} yields
    \begin{align*}
        \frac{d}{d\tau}h(\tau)&=(4\pi)^{-n/2}\tau^{-\frac{n}{2}-1}\int_{\cR_{X}}\frac{1-\tau}{\tau}\lt(f_0-W-|\nabla f_0|^2\rt)e^{-\frac{f_0-W}{\tau}}d\fg\\ 
        &=(4\pi)^{-n/2}\tau^{-\frac{n}{2}-2}\int_{\cR_{X}}(1-\tau)Re^{-\frac{f_0-W}{\tau}}d\fg.
    \end{align*}
     Thus, we obtain the monotonicity of $h(\tau)$ since $R\ge0$.

    Next we prove \eqref{eq lim h=A}. We divide $h(\tau)$ into two parts:
    \begin{equation*}
    h(\tau)=\int_{D(s)}(4\pi\tau)^{-\frac{n}{2}}e^{-\frac{f_0-W}{\tau}}d\fg+\int_{\cR_{X}\setminus D(s)}(4\pi\tau)^{-\frac{n}{2}}e^{-\frac{f_0-W}{\tau}}d\fg,
\end{equation*}
where $D(s)$ and $V(s)$ below are defined in \eqref{eq defs of AVR}. For the first part, we simply have 
\begin{equation}\label{ineq h1part}
    \int_{D(s)}(4\pi\tau)^{-\frac{n}{2}}e^{-\frac{f_0-W}{\tau}}d\fg\le(4\pi\tau)^{-\frac{n}{2}}V(s).
\end{equation}
Now we consider the second part. Since $\cA>0$, we apply Theorem \ref{thm AVR basic} to obtain that 
\begin{equation*}
    \lim_{s\to+\infty}\frac{V(s)}{s^n}=\cA\omega_n.
\end{equation*}
Multiplying both sides of \eqref{ineq5} by $V(s)^{-1}$ and using Lemma \ref{lem xvxv}, we have 
\begin{equation*}
    \lim_{s\to+\infty}\frac{sV'(s)}{V(s)}=n.
\end{equation*}
Then for any $\ep>0$, then there exists an $r=r(\ep)$ large enough such that for all $s\ge r$, we have
\begin{align*}
    \frac{sV'(s)}{V(s)}&\le n+\ep\\
    &=(n+\ep)\left(\frac{V(r)}{r^n}\frac{s^n}{V(s)}\right)^{-1}\left(\frac{V(r)}{r^n}\frac{s^n}{V(s)}\right)\\
    &\le(n+\ep)(1+\ep)\frac{V(r)}{r^n}\frac{s^n}{V(s)}\\
    &\le(n+\ep)\frac{V(r)}{r^n}\frac{s^n}{V(s)}.
\end{align*}
This means  
\begin{equation}\label{ineq v'<V}
    V'(s)\le (n+\ep)\frac{V(r)}{r^n}s^{n-1},\,\,s\ge r.
\end{equation}
Applying the regularity result \cite[Corollary 1.3]{Kot13}, we get in the geodesic normal coordinates, both $\fg$ and $\nabla f_0$ are real analytic. Therefore, both $f_0=W+R+|\nabla f_0|^2$ and $R$ are real analytic functions on $\cR_X$. Next we use \cite{M20} on the analytic function $|\nabla f_0|^2=f_0-R-W$ and we obtain that the singular set 
$$\cC=\{x\in\cR_X: |\nabla\rho|(x)=0\}$$ 
has zero measure, which means $|\nabla\rho|^{-1}$ is a nonnegative measurable function on $\cR_X\setminus\cC$. Furthermore, by the Morse-Sard Theorem \cite{SS72}, the singular value set $\rho(\cC)$ is at most countable.

We now apply the co-area formula to $\rho$ on $\cR_X\setminus\cC$ and get
\begin{eqnarray*}
V(s)&=& |D(s)\cap \mathcal{R}_X\cap \mathcal{C}|+|D(s)\cap \mathcal{R}_X\setminus\mathcal{C}|= |D(s)\cap \mathcal{R}_X\setminus\mathcal{C}|\\
&=&\int_{D(s)\cap \mathcal{R}_X\setminus\mathcal{C}}\,dg =\int_0^s\int_{\{x\in \mathcal{R}_X\setminus\mathcal{C}:\, \rho(x)=t\}}\frac{1}{|\nabla \rho|}\,dA\, dt.
\end{eqnarray*}
This implies
\begin{align*}
    V'(s)=\int_{\{x\in \mathcal{R}_X\setminus\mathcal{C}:\, \rho(x)=s\}}\frac{1}{|\nabla \rho|}\,dA.
\end{align*}
We apply the co-area formula again on $\displaystyle\int_{\cR_{X}\setminus D(s)}(4\pi\tau)^{-\frac{n}{2}}e^{-\frac{f_0-W}{\tau}}d\fg$, and combining the above equalities with \eqref{ineq v'<V} yields 
\begin{align}\label{ineq h2part}
    \int_{\cR_{X}\setminus D(s)}(4\pi\tau)^{-\frac{n}{2}}e^{-\frac{f_0-W}{\tau}}d\fg&=\int_s^{+\infty} \int_{\{x\in \mathcal{R}_X\setminus\mathcal{C}:\, \rho(x)=t\}}(4\pi\tau)^{-\frac{n}{2}}\frac{1}{|\nabla \rho|}e^{-\frac{f_0-W}{\tau}}\,dA\,dt \nonumber \\ 
    &=\int_s^{+\infty}(4\pi\tau)^{-\frac{n}{2}}e^{-\frac{t^2}{4\tau}}V'(t)dt \nonumber \\ 
    &\le(n+\ep)\frac{V(r)}{r^n}(4\pi\tau)^{-\frac{n}{2}}\int_s^{+\infty}e^{-\frac{t^2}{4\tau}}t^{n-1}dt \nonumber\\
    &\overset{\eta=\frac{t^2}{4\tau}}{=}\frac{n+\ep}{2}\frac{V(r)}{r^n}\pi^{-\frac{n}{2}}\int_{\frac{s^2}{4\tau}}^{+\infty}e^{-\eta}{\eta}^{\frac{n-2}{2}}d\eta \nonumber\\
    &\le\frac{n+\ep}{n\omega_n}\frac{V(r)}{r^n},
\end{align}
where we used 
\begin{equation*}
    1=\int_{\bR^n}\pi^{-\frac{n}{2}}e^{-|x|^2}dx=\frac{n\omega_n}{2}\pi^{-\frac{n}{2}}\int_0^{+\infty}e^{-t}t^{\frac{n-2}{2}}dt
\end{equation*}
to get the last inequality. Now the upper bound of $h$ follows from \eqref{ineq h1part} and \eqref{ineq h2part}, namely,
\begin{equation*}
    h(\tau)\le(4\pi)^{-\frac{n}{2}}\frac{V(r)}{\tau^{\frac{n}{2}}}+\frac{n+\ep}{n\omega_n}\frac{V(r)}{r^n}.
\end{equation*}
Now set $\tau\to+\infty$ first and then $r\to+\infty$, which leads to
\begin{equation*}
    \lim_{\tau\to+\infty}h(\tau)\le\cA.
\end{equation*}
To see that $\displaystyle\lim_{\tau\to+\infty}h(\tau)\ge\cA$, we only need to replace \eqref{ineq v'<V} by 
\begin{equation*}
    V'(s)\ge (n-\ep)\frac{V(r)}{r^n}s^{n-1}
\end{equation*}
and repeat the above process.
\end{proof}

\begin{proof}[Proof of Theorem \ref{thm-AVR gap for metric soliton}]
It suffices to show that there exists a $\delta=\delta(n)$ such that if $\cA\ge1-\delta$, then the metric soliton is Gaussian. Since the AVR is positive, we have by Proposition \ref{prop h monotone} that
\begin{equation*}
    \cA\le h(1)=e^W.
\end{equation*}
Set $\dt=\dt(n)=1-e^{-\ep}$, where $\ep=\ep(n)$ is the soliton entropy gap constant from \cite[Proposition 6.1]{CMZ24}. Then $\cA\ge1-\dt=e^{-\ep}$ implies $W\ge-\ep$, which furthermore implies that the metric soliton in question is the Gaussian soliton. This  completes the proof.
\end{proof}

\subsection{An asymptotic volume gap theorem for metric solitons}
We now prove Theorem \ref{thm-AVR gap corollary} by using a similar method as in \cite[Corollary 2.9]{WW25}. We still consider a metric soliton $(X,d,\nu)$ with regular part $(\mathcal{R}_X,\fg,f_0)$ as introduced at the beginning of the section. The following lemma is needed to control the local $L^{2-\ep}$ bound of $|\Rm|$ for metric solitons.

\begin{Lemma}\label{lm Rm lp}
    There exists some $\underline{A}=\underline{A}(n)$, such that for any $A\ge\underline{A}$ and any $\ep>0$, we have 
    \begin{equation*}
        \int_{B_{\fg}(x_0,A)\cap\cR_X}|\Rm|^{2-\ep}d\fg\le C(n,W,\ep)A^{n-2+2\ep}.
    \end{equation*}
\end{Lemma}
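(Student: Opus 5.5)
Lemma \ref{lm Rm lp} asks for an $L^{2-\ep}$ bound on $|\Rm|$ over large balls with polynomial growth $A^{n-2+2\ep}$. The plan is to get it from a \emph{scale-invariant} local integral estimate for metric solitons, combined with a scaling argument on annuli. The key input is Bamler's integral curvature estimate for noncollapsed $\bF$-limits. On any parabolic ball of scale $r$ centered at a point of the limit flow, the curvature radius satisfies
\[
\int |\Rm|^{2-\ep}\le \int r_{\Rm}^{-4+2\ep}\le C(n,Y,\ep)\, r^{n-4+2\ep},
\]
where the Nash entropy lower bound $Y$ is controlled by $W$. This is the bound from \cite[Theorem 2.15/Theorem 10.2]{Bam20b}: $\int r_{\Rm}^{-4+2\ep}$ is finite with scale-invariant constant, and $|\Rm|\le r_{\Rm}^{-2}$.

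\textbf{Step 1 (transfer to the model).} Use the self-similar structure of Theorem \ref{thmbase}(b). A time-slice $\cR_t$ is $(\cR_X,|t|\fg)$, and the regular part is a product in time up to the flow of $\nabla f_0$. So a space-time integral of $r_{\Rm}^{-4+2\ep}$ over $P^*(x,r)$ is comparable to $r^2$ times a spatial integral over a $\fg$-ball in $\cR_X$ of radius $\sim r$, after rescaling by $|t|\in[1,2]$ say. The curvature radius on $\cR_X$ at scale $1$ then bounds $|\Rm_\fg|$.

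\textbf{Step 2 (the unit-scale estimate).} First obtain, for every $y\in X$ with $d_\fg(x_0,y)\le A$ and every $\ep>0$,
\[
\int_{B_\fg(y,1)\cap\cR_X}|\Rm|^{2-\ep}d\fg\le C(n,W,\ep).
\]
At the base point the Nash entropy is bounded below by $W$. For points far from the center, the entropy at the point is bounded below using $\cN$ monotonicity and a variance comparison. The bound is $\cN_y(\tau)\ge W-C(n)$ at scales $\tau\gtrsim d^2(x_0,y)$, by Bamler's entropy oscillation bound \cite{Bam20a}. Since a noncollapsing bound at a larger scale implies one at smaller scales with the same constant, this gives a uniform $Y=Y(n,W)$.

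\textbf{Step 3 (covering).} This step uses the parabolic structure. Work at time $t=-\tau$ with $\tau\sim A^2$. Points of $B_\fg(x_0,A)$ then lie at distance $\sim 1$ in $d_t=\tau^{1/2}d$, scaled relative to $\sqrt\tau$. Applying the scale-$\sqrt\tau$ estimate gives
\[
\int_{B_{\fg_t}(\cdot,\sqrt\tau)}|\Rm_{\fg_t}|^{2-\ep}d\fg_t\le C\tau^{\frac{n-4+2\ep}{2}}.
\]
Convert back using $\fg_t=\tau\fg$, where $|\Rm_{\fg_t}|=\tau^{-1}|\Rm_\fg|$ and $d\fg_t=\tau^{n/2}d\fg$. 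This yields
\[
\int_{B_\fg(x_0,1)}|\Rm|^{2-\ep}\le C\tau^{\ep}\sim CA^{2\ep},
\]
which is not yet the desired statement. So instead cover $B_\fg(x_0,A)$ by unit balls. The number of balls is $\lesssim A^n$ by volume comparison: Theorem \ref{thm AVR basic}(c) gives the upper bound, and noncollapsing gives a lower bound on unit balls. A unit-scale constant would give only $A^n$. To reach $A^{n-2+2\ep}$, use the soliton structure instead. Since $|\nabla f_0|\sim d/2$, the flow of $\nabla f_0$ over unit time sweeps a point at distance $d$ through a region of length $\sim d$. The time-$t$ ball of radius $1$ around $y(t)$ corresponds to a $\fg$-ball of radius $|t|^{-1/2}$. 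Integrating the space-time estimate over a parabolic ball of size $r\sim 1/d$ around a point at distance $d$ is the natural scale, because the curvature radius there is $\lesssim$ the scale where the flow moves by $O(r)$. Done carefully, this gives
\[
\int_{B_\fg(y,1)}|\Rm|^{2-\ep}\lesssim d^{-2+2\ep}
\]
on an annulus at distance $d$. Summing over dyadic annuli yields $A^{n-2+2\ep}$.

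I expect the main obstacle to be this last step: choosing the correct parabolic scale at a far-away point so that the space-time $r_{\Rm}$ integral genuinely improves by $d^{-2}$. Equivalently, the difficulty is relating spatial balls in $\cR_X$ to Bamler's $P^*$-parabolic neighborhoods under the drift $\nabla f_0$, while keeping the entropy constant uniform in terms of $W$.
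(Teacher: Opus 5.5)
\textbf{There is a genuine gap in Step 3.} The per-ball decay $\int_{B_\fg(y,1)}|\Rm|^{2-\ep}d\fg\lesssim d_\fg(x_0,y)^{-2+2\ep}$ is not only unproved; it is false. Take the round cylinder $S^2\times\bR^{n-2}$, which is a smooth shrinker and hence a metric soliton. There $|\Rm|$ is a nonzero constant, so every unit ball contributes a fixed positive amount, however far it is from $x_0$. The lemma holds for the cylinder only because its volume growth is $A^{n-2}$, not because curvature decays. So no argument that counts $\lesssim d^n$ unit balls per dyadic annulus and bounds each one separately can succeed.

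Your own rescaling computation already shows why single-slice rescaling gains nothing. At $t=-\tau$ the ball $B_{\fg_t}(x_0(t),\sqrt\tau)$ is just $B_\fg(x_0,1)$, and the exponent actually comes out as $\tau^{0}$, not $\tau^{\ep}$. Self-similarity makes that estimate equivalent to the unit-scale one. Two further slips: you also mix up scalings, since a space-time integral over a parabolic neighborhood of scale $r$ scales like $r^{n-2+2\ep}$, whereas $r^{n-4+2\ep}$ is the single-slice scaling. And your time-$(-\tau)$ picture is mis-scaled, because points of $B_\fg(x_0,A)$ lie at $d_t$-distance about $A\sqrt\tau$, not about $\sqrt\tau$.

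\textbf{The missing idea} is to apply Bamler's space-time bound once, at the large scale $A$, centered at the time-$0$ base point, rather than summing unit-scale bounds. The paper proceeds as follows.
\begin{enumerate}[(1)]
\item By [CMZ24, Theorem 3.1], assume the soliton lives on $(-\infty,0)$.
\item By [CMZ24, Lemma 6.4], $B_{\fg_t}(x_0(t),A\sqrt{|t|})\subset P^*_\nu(2A,-2)$ for all $t\in[-2,0)$ once $A\ge\underline{A}(n)$.
\item By [Bam20b, Theorem 2.31], $\int\!\!\int|\Rm|^{2-\ep}$ over this whole neighborhood is at most $C(n,W,\ep)A^{n-2+2\ep}$. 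The entropy enters only at the base point, where it is controlled by $W$, so your Step 2 on entropy at far points is unnecessary.
\item Restrict to $t\in[-2,-1/2]$ and average to get a slice $t'$ with $\int_{B_{\fg_{t'}}(x_0(t'),A\sqrt{|t'|})\cap\cR_{t'}}|\Rm|^{2-\ep}d\fg_{t'}\le 2CA^{n-2+2\ep}$.
\item Since $|t'|\in[1/2,2]$, self-similarity identifies this with $\int_{B_\fg(x_0,A)\cap\cR_X}|\Rm|^{2-\ep}d\fg$ up to a bounded factor.
\end{enumerate}
The improvement over the naive $A^n$ comes from the fact that an $O(1)$-time portion of a scale-$A$ parabolic neighborhood already contains the whole ball of radius $A$ at $|t|\sim 1$. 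No covering or pointwise decay is needed.
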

\begin{proof}
     We first shift our focus from the model space to the metric flow.  By \cite[Theorem 3.1]{CMZ24}, we may, without loss of generality, assume the metric soliton in \eqref{def-limiting-condition} and \eqref{eq limN} is defined over $(-\infty,0)$ and $T_i\nearrow+\infty$. Let $x_0(t)$ be the integral curve of $\partial_{\ft}-\nabla f$ with a center of the metric soliton $x_0(-1)=x_0\in\cR_{-1}$. Define 
    \begin{equation*}
        P^*_{\nu}(A,-T'):=\left\{x\in\cX_{[-T',0)},d_{W_1}^{-T'}(\nu_{x;-T'}\,:\,\nu_{-T'})<A\right\}.
    \end{equation*}
    By \cite[Lemma 6.4]{CMZ24}, if $A\ge\underline{A}=(\sqrt{2}-1)^{-1}\sqrt{H_n}$, we have 
    \begin{equation*}
        B_{\fg_t}\lt( x_0(t), A\sqrt{|t|}\rt)\subset P_{\nu}^*\lt(2A,-2\rt),\,\, \text{for all}\,\,t\in[-2,0).
    \end{equation*}
     We shall apply \cite[Theorem 2.31]{Bam20b}. Note that the parameters $Y$ in the statement thereof are absorbed into the parameter $W$ in our case due to \eqref{eq limN}. Hence for any $\ep>0$,
    \begin{equation*}
       \int_{-2}^{-1/2}\int_{B_{\fg_t}\lt(x_0(t),A\sqrt{|t|}\rt)\cap\cR_t}|\Rm|^{2-\ep}d\fg_tdt \le  \int_{-2}^0\int_{P^*_{\nu}\lt(2A,-2\rt)\cap\cR_t}|\Rm|^{2-\ep}d\fg    \le C(n,W,\ep)A^{n-2+2\ep}.
    \end{equation*}
    Then there exists a $t'\in[-2,-1/2]$, such that 
    \begin{equation*}
        \int_{B_{\fg_{t'}}\lt(x_0(t'),A\sqrt{|t'|}\rt)\cap\cR_{t'}}|\Rm|^{2-\ep}d\fg_{t'}\le 2C(n,W,\ep)A^{n-2+2\ep}.
    \end{equation*}
    The geometry of $\cR_{t'}\subset\cX_{t'}$ and $\cR_X\subset \cX$ differs only by a scaling with some bounded factor $t'\in[-2,-1/2]$, where $B_{\fg_{t'}}\lt(x_0(t'),A\sqrt{|t'|}\rt)\subset\cX_{t'}$ corresponds to $B_{\fg}(x_0,A)\subset\cX$. The two time-slices could be identified via an isometry in view of Theorem \ref{thmbase} (b) and \cite[Proposition 3.3]{CMZ24}. The proof is done.

\end{proof}

\begin{proof}[Proof of Theorem \ref{thm-AVR gap corollary}]
   Set $\dt_0>0$ and $A_0<+\infty$ to be determined. Assume the metric soliton in question satisfies
   \begin{align}\label{ineq contradict}
       \frac{|B_\fg(x_0,A_0)\cap\cR_{X}|}{\omega_n A_0^n}\ge1-\dt_0.
   \end{align}
We need to control the error term of the integral of $R$.  By Lemma \ref{lm Rm lp} and \cite[Corollary 1.2]{CMZ24}, we have 
   \begin{align*}
       \int_{B_{\fg}(x_0,A)\cap\cR_{-1}}Rd\fg &\le \left(\int_{B_{\fg}(x_0,A)\cap\cR_{-1}}R^{2-\ep}d\fg\right)^{\frac{1}{2-\ep}}|B_{\fg}(x_0,A)\cap\cR_{-1}|^{\frac{1-\varepsilon}{2-\varepsilon}}\\
       &\le C_2(n,\ep)A^{n+1-\frac{4-3\ep}{2-\ep}}
   \end{align*}
   holds for all $A\ge\underline{A}$, where $\underline{A}=\underline{A}(n)$ is the constant defined in Lemma \ref{lm Rm lp}. Set $\ep=2/3$ and note that \eqref{ineq4} gives an equivalence between $D(A)$ and $B_\fg(x_0,A)$, so this means for $A\ge \underline{A}$,
   \begin{align}\label{ineq chi}
       \frac{\chi(A)}{A^{n+1}}\le C_3(n)A^{-\frac{3}{2}}.
   \end{align}
Then by the proof of Lemma \ref{lem xvxv}, for $A\ge\underline{A}$,
   \begin{align*}
       \cA\omega_n-P(A)&=\int_{A}^{+\infty}2\frac{\chi(s)}{s^{n+1}}\left(\frac{2n+4}{s^2}-1\right)ds\ge-C_3\int_{A}^{+\infty}s^{-\frac{3}{2}}ds=-\frac{2C_3}{\sqrt{A}}.
   \end{align*}
The definition of $P$ and \eqref{ineq contradict} imply that
   \begin{align*}
       \cA&\ge\frac{V(A)}{\omega_n A^n}-4\frac{\chi(A)}{\omega_nA^{n+2}}-2\frac{C_3}{\omega_n\sqrt{A}}\\
       &\ge \frac{|B_{\fg}\lt(x_0,A-C(n)\rt)\cap\cR_{X}|}{\omega_n\lt(A-C(n)\rt)^n} \left(\frac{A-C(n)}{A}\right)^n -4\frac{\chi(A)}{\omega_nA^{n+2}}-2\frac{C_3}{\omega_n\sqrt{A}}\\
       &\ge (1-\dt_0) \left(\frac{A_0}{A_0+C(n)}\right)^n -4\frac{\chi \left( A_0+C(n) \right)}{\omega_n \lt( A_0+C(n) \rt)^{n+2}}-2\frac{C_3}{\omega_n\sqrt{A_0+C(n)}},
   \end{align*}
where we let $A_0$ in \eqref{ineq contradict} be $A-C(n)$ to get the last inequality and $C(n)$ is the upper bound in \eqref{ineq4}. By \eqref{ineq chi}, if we set $\dt_0=\dt/2$ and $A_0\ge\underline{A_0}(\dt)$,  where $\dt=\dt(n)$ is the gap constant in Theorem \ref{thm-AVR gap for metric soliton} we can set
   \begin{align*}
       \cA\ge (1-\dt_0) \left(\frac{A_0}{A_0+C(n)}\right)^n -4\frac{\chi \left( A_0+C(n) \right)}{\omega_n \lt( A_0+C(n) \rt)^{n+2}}-2\frac{C_3}{\omega_n\sqrt{A_0+C(n)}}\ge1-\dt,
   \end{align*}
   and this theorem follows from Theorem \ref{thm-AVR gap for metric soliton}.
\end{proof}

\section{Applications to Ricci flows}
In this section, we focus on Ricci flows, and we will prove Theorem \ref{thm main2} and Theorem \ref{thm main}.

\subsection{Volume estimate under $\bF$-convergence}

We shall derive the ``volume continuity'' with respect to $\bF$-convergence.  All the discussions in this subsection could be viewed as a slight generalization of \cite[Lemma 4.4, Lemma 4.5]{DWZ26}.

Let us recall the settings at the beginning of the previous section. Assume that there is a metric soliton $\lt(\cX,\lt(\nu_t\rt)_{t\in(-T,0)}\rt)$ obtained in \eqref{ineq N>}, \eqref{def-limiting-condition}, and \eqref{eq limN}. Recall that we have assumed, without loss of generality, that $T>3$. The metric soliton is modeled on $(X,d,\nu)$ with regular part $(\cR_X,\fg,f_0)$; these are simply the $t=-1$ slices of $\mathcal{X}$ and $\mathcal{R}$, respectively. Let $x_0\in \cR_X$ be a fixed center of the metric soliton. 

Let $(z_i,-1)$ be an $H_n$-center of $(x_i,0)$ with respect to the flow $g^i_t$. Our goal is to approximate $\lt|B_{\fg}(x_0,A)\rt|$ by $\lt|B_{g^i_{-1}}(z_i,A)\rt|_{g^i_{-1}}$ for $i$ large enough and $A\gg1$. To this end, we decompose $B_{g^i_{-1}}(z_i,A)$ into two parts:
    \begin{align*}
        \cR^{(i)}(r,A):=B_{g^i_{-1}}(z_i,A)\cap\{r_{\Rm}\ge r\},\quad \cS^{(i)}(r,A):=B_{g^i_{-1}}(z_i,A)\cap\{r_{\Rm}<r\},\quad r>0,\ \ A<+\infty.
    \end{align*}
    Here $r_{\Rm}$ stands for the parabolic curvature radius, namely, the largest $r$ such that the curvature is smaller than $r^{-2}$ on $P(x,t;r):=B_{g_t}(x,r)\times[t-r^2,t+r^2]\cap I$.  Obviously, 
    \begin{align*}
        B_{g^i_{-1}}(z_i,A) =\cR^{(i)}(r,A) \sqcup \cS^{(i)}(r,A).
    \end{align*}
    In this subsection, we shall estimate  the volumes of $\cS^{(i)}(r,A)$ and  $\cR^{(i)}(r,A)$, respectively. 
    
    The singular part $\cS^{(i)}(r,A)$ is estimated by Bamler's quantitative stratification theorem.
\begin{Lemma}\label{Lm: volume of the singular part}
        If 
        \begin{align*}
            \quad A\ge 1,\quad r\le\overline{r}(n,A,Y),
        \end{align*}
        then there is an $\alpha=\alpha(n,Y)>0$, such that
        \begin{align*}
            \left|\cS^{(i)}(r,A)\right|_{g^i_{-1}}\le C(n,A,Y)r^{\alpha}\quad \text{ for all }\quad i.
        \end{align*}
        Here $Y$ is the entropy bound in \eqref{ineq N>}.
    \end{Lemma}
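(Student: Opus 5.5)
We will deduce this from Bamler's quantitative stratification / curvature-radius volume estimate \cite[Theorem 2.28 or Theorem 10.2]{Bam20b}. In its Ricci-flow form it says: if $(M,g_t)$ has $\cN_{x^*,t^*}(\tau)\ge -Y$ on a suitable parabolic neighbourhood, then for any $\ep>0$ the sublevel set $\{r_{\Rm}<s\}$ meets a $P^*$-parabolic ball of size $\sim A$ in a set whose time-slice volume is at most $C(n,A,Y,\ep)\,s^{4-\ep}$. This bound is uniform over all flows with the entropy bound. We will take $\ep=1$, which gives $\alpha=3$; any exponent $\alpha(n,Y)>0$ suffices for our purposes. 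So the plan is to place $\cS^{(i)}(r,A)$ inside a region where this estimate applies, uniformly in $i$.

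\emph{Step 1: localisation.} Since $(z_i,-1)$ is an $H_n$-center of $(x_i,0)$, \cite[Lemma 6.4]{CMZ24} (or the $P^*$-ball comparison in \cite{Bam20a}) gives
$$B_{g^i_{-1}}(z_i,A)\subset P^*(x_i,0;\,C(n)A,\,-2)\cap\{t=-1\}$$
for all $A\ge 1$. The Nash entropy bound \eqref{ineq N>} at scale $\tau$ propagates, by Bamler's entropy monotonicity and gradient estimates \cite{Bam20a}, to a lower bound $\cN_{y,t}(s)\ge -Y'(n,Y,A)$ for every $(y,t)$ in this parabolic neighbourhood and $s\le 2$. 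Hence the hypotheses of the stratification theorem hold with constants depending only on $n,A,Y$.

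\emph{Step 2: apply the estimate on the slice $t=-1$.} For $r\le \overline r(n,A,Y)$ we get
$$|\{r_{\Rm}<r\}\cap B_{g^i_{-1}}(z_i,A)|_{g^i_{-1}}\le C(n,A,Y)\,r^{\alpha}.$$
If the available statement is only space-time integrated, we instead argue as follows. Apply it on $[-1-r^2,-1+r^2]$, and use that $r_{\Rm}$ is $1$-Lipschitz in the parabolic sense and comparable on $P(x,-1;r/2)$. This shows that points of $\{r_{\Rm}<r\}$ at time $-1$ produce a space-time set of measure $\gtrsim r^2\cdot$(slice volume) where $r_{\Rm}<2r$. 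Dividing the space-time estimate by $r^2$ then gives the slice bound, with the exponent reduced by $2$ but still positive.

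The main obstacle is Step 1: the stratification theorem is phrased with $P^*$-balls and local entropy bounds, and we must check carefully that the metric ball around the $H_n$-center at time $-1$ sits inside a $P^*$-ball of controlled radius, on which the entropy bound holds uniformly. The restriction $r\le\overline r$ then simply ensures that $r$ is below the scale at which the theorem is valid.
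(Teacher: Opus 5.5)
Your overall strategy (place $B_{g^i_{-1}}(z_i,A)\times\{-1\}$ inside a $P^*$-neighbourhood of $(x_i,0)$ of size $\sim A$ and invoke Bamler's quantitative bound on $\{r_{\Rm}<r\}$) is the paper's strategy. The paper simply unpacks that bound in four steps:
\begin{enumerate}
\item by \cite[Corollary 17.2]{Bam20b}, $\cS^{(i)}(r,A)$ lies in the time-slice stratum $\tilde{\mathcal S}^{\varepsilon,n-2}_{r/\varepsilon,1}$;
\item the ball is covered by $\overline N(n,A)$ unit $P^*$-neighbourhoods;
\item \cite[Proposition 11.2]{Bam20b} covers the stratum inside each of these by $C r^{-(n-2)-\varepsilon}$ sets $P^*(\cdot\,;r/\varepsilon)$, each of slice volume $\le Cr^n$ by \cite[Theorem 9.8]{Bam20a};
\item hence $\alpha=2-\varepsilon$.
\end{enumerate}
Your Step 1 is fine, though it follows directly from monotonicity of $d_{W_1}$: $d^{-2}_{W_1}(\nu_{y,-1;-2},\nu_{x_i,0;-2})\le d_{g^i_{-1}}(y,z_i)+\sqrt{H_n}$. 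Note that \cite[Lemma 6.4]{CMZ24} concerns the limit soliton, not the flows $g^i$.

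The real problem is the estimate you quote. The exponent $4-\epsilon$ in Bamler's curvature-radius bound refers to the $(n+2)$-dimensional \emph{space-time} measure of $\{r_{\Rm}<s\}\cap P^*$. On a single time-slice the bound is $Cs^{2-\epsilon}$, and one cannot expect more. At a non-static singular point the tangent flow may be $\mathbb{R}^{n-2}\times S^2$, so time-slices of the singular set only have codimension $\ge 2$. This is precisely why the paper works with the $(n-2)$-stratum. So your $\alpha=3$ is not justified, and is false in general. With the slice estimate quoted correctly, your Steps 1--2 give $\alpha=2-\epsilon$, which is all the lemma needs.

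Your fallback does not work as written either. For $t>-1$ you cannot bound $|S|_{g_t}$ below by $|S|_{g_{-1}}$, because nothing bounds $R$ from above on $\{r_{\Rm}<r\}$. Only $t\le -1$ is usable, via $R\ge R_{\min}$.

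More seriously, at a point with $r_{\Rm}(x,-1)=\rho\ll r$, nothing is controlled on $P(x,-1;r/2)$ or along the world-line over a time $\sim r^2$. You can neither assert $r_{\Rm}<2r$ there, nor that the world-line stays in the $P^*$-neighbourhood where the space-time bound is applied. By monotonicity the latter needs $d^{t}_{W_1}(\delta_x,\nu_{x,-1;t})\lesssim\sqrt{|t+1|}$, and \cite[Proposition 9.5]{Bam20a} gives this only under a curvature bound at that scale.

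The fallback can be repaired by working scale by scale. Split $S$ into shells $S_k=\{2^{-k-1}r\le r_{\Rm}<2^{-k}r\}$ and use world-lines over $[-1-c4^{-k}r^2,-1]$. On these, the curvature bound at the point's own scale controls $r_{\Rm}$, the volume form and the $W_1$-drift. The space-time bound then gives $|S_k|_{g_{-1}}\le C(2^{-k}r)^{2-\epsilon}$, and summing over $k$ finishes the argument.
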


\begin{proof}
  By \cite[Corollary 17.2]{Bam20b}, if $\ep= \ep(n,Y)\in(0,1)$ and $r<\varepsilon$, then for any $r'\ge r/\varepsilon$, the points in $\cS^{(i)}(r,A)$ are neither weakly $(n-1,\ep,r')$-split, nor $(\ep,r')$-static and weakly $(n-3,\ep,r')$-split at the same time. Thus, 
  \begin{align*}
      \cS^{(i)}(r,A) \subset \tilde{\mathcal{S}}_{r/\varepsilon,1}^{\varepsilon,n-2}(g^i),
  \end{align*}
  where $\tilde{\mathcal{S}}_{r/\varepsilon,1}^{\varepsilon,n-2}(g^i)$ are the quantitative strata with respect to the flow $g^i_t$ in \cite[Definition 11.1]{Bam20b}. By \cite[Theorem 9.11]{Bam20a}, we can find a set of points $\{y_j^{(i)}\}_{j=1}^{N^{(i)}}\subset \cS^{(i)}(r,A)$ such that
  \begin{equation*}
      \cS^{(i)}(r,A)\times\{-1\}\subset \bigcup_{j=1}^{N^{(i)}} P^*\left(y_j^{(i)},-1;1\right),\,\,N^{(i)}\le \overline{N}(n,A).
  \end{equation*}
See also \cite{Bam20a} the definition of $P^*$-parabolic neighborhoods. This can be done because
\begin{align*}
    \cS^{(i)}(r,A)\times\{-1\} \subset B_{g^i_{-1}}(z_i,A)\times\{-1\}\subset P^*\left(z_i,-1; A,1,-1\right).
\end{align*}
Since $\varepsilon<1$, we have
  \begin{align}\label{strata2}
      \cS^{(i)}(r,A)\times\{-1\}\subset \bigcup_{j=1}^{N^{(i)}} \left(\tilde{\mathcal{S}}_{r/\varepsilon,1}^{\varepsilon,n-2}(g^i)\cap P^*\left(y_j^{(i)},-1;1/\varepsilon\right)\right).
  \end{align}
  
  Now, applying  \cite[Proposition 11.2]{Bam20b} to $\tilde{\mathcal{S}}_{r/\varepsilon,1}^{\varepsilon,n-2}(g^i)\cap P^*\left(y_j^{(i)},-1;1/\varepsilon\right)$ with $\varepsilon\to\varepsilon$, $r\to 1/\varepsilon$, $\sigma \to r$, we can find $\lt\{y^{(i)}_{j,k}\rt\}_{k=1}^{N^{(i)}_j}$, such that
\begin{align}\label{strata3}
    \tilde{\mathcal{S}}_{r/\varepsilon,1}^{\varepsilon,n-2}(g^i)\cap P^*\left(y_j^{(i)},-1;1/\varepsilon\right) \subset \bigcup_{k=1}^{N^{(i)}_j} P^*\left(y_{j,k}^{(i)},-1;r/\varepsilon\right),\quad N^{(i)}_j \le C(n,Y,\varepsilon) r^{-n+2-\varepsilon}.
\end{align}
Since, by \cite[Theorem 9.8]{Bam20a}, 
\begin{align}\label{strata4}
    \left|P^*\left(y_{j,k}^{(i)},-1;r/\varepsilon\right)\bigcap M\times\{-1\}\right|_{g^i_{-1}}\le C(n,Y)r^n\varepsilon^{-n},\quad k= 1,2,\hdots,N^{(i)}_j,\ \ j=1,2,\hdots, N^{(i)},
\end{align}
we have, combining assertion(c) with \eqref{strata2}-\eqref{strata4},
\begin{align*}
    \left|\cS^{(i)}(r,A)\right|_{g^i_{-1}}\le C(n,Y,A,\varepsilon)r^{2-\varepsilon}.
\end{align*}
Since $\varepsilon=\varepsilon(n,Y)$, we obtain the lemma by letting $\alpha=2-\varepsilon>0$.

\end{proof}

\begin{Lemma}\label{Lm:smooth volume convergence}
    If $A\ge\underline{A}(n)$, $r\in(0,1)$, then passing to a subsequence,  we have
    \begin{align*}
        \left|\cR^{(i)}(r,A)\right|_{g^i_{-1}}\le \left|B_{\fg}(x_0,A+C(n))\cap\mathcal{R}_X\right|+\Psi(i^{-1}\,|\,n,A,r)\quad \text{ for all }\ i,
    \end{align*}
    where $\Psi(i^{-1}\,|\,r,A)\to0$ while $i\to \infty$ and $r$, $A$ are fixed.
\end{Lemma}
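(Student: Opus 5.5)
The plan is to transfer the regular region $\cR^{(i)}(r,A)$ to the limit via the smooth convergence of Theorem \ref{Thm_smooth_convergence}, using the maps $\psi_i$, and to bound the image by a ball around the center $x_0$. The constant $C(n)$ will absorb the discrepancy between the $H_n$-center $z_i$ and the limit center $x_0$.

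\textbf{Step 1: locating $z_i$.} First I will show that $(z_i,-1)$ converges, within $\fC$ and after passing to a subsequence, to a point of $\cX_{-1}$ at $d_{-1}$-distance at most $C(n)$ from $x_0$. Since $(z_i,-1)$ is an $H_n$-center of $(x_i,0)$, we have $d_{W_1}(\delta_{z_i},\nu^i_{x_i,0;-1})\le\sqrt{H_n}$. By the time-wise convergence at $t=-1$, the measures $\nu^i_{x_i,0;-1}$ converge in $W_1$, inside the correspondence $Z$, to $\nu_{-1}$. The center satisfies $\Var(\nu_{-1},\delta_{x_0})\le H_n$. Hence $d^Z_{-1}(\varphi^i_{-1}(z_i),\varphi_{-1}(x_0))\le 2\sqrt{H_n}+o(1)$, which gives a constant $C_0(n)$.

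\textbf{Step 2: transferring regular points.} For a point $y\in\cR^{(i)}(r,A)$, the flow $g^i$ has $|\Rm|\le r^{-2}$ on $P(y,-1;r)$. Together with the noncollapsing bound from Bamler's heat kernel and volume estimates (the constant depends only on $Y$, which is absorbed into $W$), this gives a uniform lower bound on injectivity radius. Consequently, any subsequential $\fC$-limit of such points lies in $\cR_{-1}$ with $r_{\Rm}\ge r$, by Bamler's characterization of the regular part through curvature-radius lower semicontinuity (\cite[Theorem 2.5]{Bam20b}). Moreover, the set $K:=\{x\in\cR_X:\ d(x,x_0)\le A+C_0+1,\ r_{\Rm}(x)\ge r/2\}$ is compact in $\cR_X$.

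By Theorem \ref{Thm_smooth_convergence}(b),(c) and a compactness argument, for large $i$ the set $\cR^{(i)}(r,A)$ is contained in $\psi_i(U_i\cap K')$, where $K'$ is a slightly larger compact set of the same type. Indeed, if infinitely many points escaped this containment, their subsequential limits, which exist by the $H_n$-concentration and the distance bound, would contradict part (b). I expect this compactness/containment step to be the main obstacle. It requires passing from the intrinsic curvature-radius bound on $M^i$ to membership in $U_i$, and controlling distances inside $\cR_X$ by $d$ using the metric completion property, Theorem \ref{thm partial regular}(c). The distance bound follows from $d_{g^i_{-1}}(z_i,y)<A$ and the triangle inequality in $Z$, giving $d(x_0,\psi_i^{-1}(y))\le A+C_0(n)+o(1)$.

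\textbf{Step 3: volume comparison.} Since $\psi_i^*g^i_{-1}\to\fg$ smoothly on $U_i$ by Theorem \ref{Thm_smooth_convergence}(a), we obtain
\[
|\cR^{(i)}(r,A)|_{g^i_{-1}}\le(1+\ep_i)^{n}\,|K'\cap B_\fg(x_0,A+C_0+1)|.
\]
By Theorem \ref{thm AVR basic}(c), the right-hand side is at most $|B_\fg(x_0,A+C(n))\cap\cR_X|+C(n,A)\ep_i$, with $C(n):=C_0(n)+1$. This yields the stated bound with $\Psi(i^{-1}\,|\,n,A,r)\to0$.
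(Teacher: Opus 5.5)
\textbf{Gap: the key fact in Step 2 is asserted, not proved.} Your Steps 1 and 3 match what the paper does: the $W_1$ triangle inequality in $Z$ through $\nu^i_{x_i,0;-1}$ and $\nu_{-1}$, and the volume comparison via $\psi_i^*g^i\to\fg$. Replacing the paper's finite covering by a contradiction argument through Theorem \ref{Thm_smooth_convergence}(b) would also be fine.

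Everything, however, rests on the Step 2 claim: a sequence $(y_i,-1)$ with $y_i\in\cR^{(i)}(r,A)$ has a subsequential $\fC$-limit, and that limit lies in $\cR_{-1}$. This is exactly the nontrivial content of the lemma, and you leave it unproved.
\begin{itemize}
\item Theorem \ref{Thm_smooth_convergence}(b) is only an ``iff'' for a limit $x$ already known to lie in $\cR$. As quoted here, Bamler's smooth-convergence theorem says nothing about limits of points carrying a lower bound on $r_{\Rm}$.
\item Compactness for points (\cite[Theorem 6.20]{Bam23}, as the paper uses it) only gives a limiting conjugate heat flow $(\tilde\nu_t)_{t<-1}$ with $\Var(\tilde\nu_t)\to0$. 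That is not a point of $\cX_{-1}$, let alone a regular one.
\item Your injectivity-radius remark does not close this. Hamilton compactness gives some smooth local limit of $(M^i,g^i_t,y_i)$, but identifying it with a neighborhood inside $\cR$ of the $\bF$-limit is precisely what is missing.
\end{itemize}
Citing ``curvature-radius lower semicontinuity'' without a precise statement leaves the proof empty at its decisive point.

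\textbf{How the paper closes it.} This is the paper's Claim. It works at the earlier time $t'=-1-r'^2$ with $r'\ll r$, where the convergence is time-wise.
\begin{enumerate}
\item Using density of $\cR_{t'}$ and $\Var(\tilde\nu_{t'})\le H_n r'^2$, pick $y'\in\cR_{t'}$ with $d_{W_1}(\delta_{y'},\tilde\nu_{t'})\le C(n)r'$, and set $(y_i',t')=\psi_i(y')$.
\item Time-wise $W_1$-convergence at $t'$ gives $d_{W_1}^{g^i_{t'}}(\nu^i_{x^{(i)},-1;t'},\delta_{y_i'})\le C(n)r'$.
\item The curvature bound on $P(x^{(i)},-1;r')$ and \cite[Proposition 9.5]{Bam20a} give $d_{W_1}^{g^i_{t'}}(\nu^i_{x^{(i)},-1;t'},\delta_{x^{(i)}})\le C(n)r'$.
\item Hence $d_{g^i_{t'}}(x^{(i)},y_i')\le C(n)r'$, so $(x^{(i)},-1)$ lies in a parabolic neighborhood of $(y_i',t')$ of size $0.01r$ on which the curvature is controlled.
\item Therefore $\psi_i^{-1}(x^{(i)},-1)$ lies in the fixed unscathed cube $P(y';0.98r)\Subset\cR$, and subconverges to some $\tilde x\in\cR_{-1}$.
\item The bound $d_\fg(x_0,\tilde x)\le A+C(n)$ then follows from your Step 1 computation.
\end{enumerate}
The same argument works for an arbitrary sequence in $\cR^{(i)}(r,A)$. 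Once it is supplied, your contradiction argument is a legitimate alternative to the paper's covering by $\overline N(n,A,r)$ balls of radius $r/3$, and it avoids the covering-number estimate.

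\textbf{Minor point.} In Step 1 you only prove $d^Z_{-1}(\varphi^i_{-1}(z_i),\varphi_{-1}(x_0))\le2\sqrt{H_n}+o(1)$, not that $(z_i,-1)$ converges within $\fC$. Only the distance bound is needed, so this is harmless.
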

\begin{proof}
Let 
\begin{align}\label{eq:limiting diffeo}
    \psi_i:\mathcal{R}\supset U_i\to V_i\subset M^i\times (-T_i,0]
\end{align}
be the sets and diffeomorphism defined in Theorem \ref{Thm_smooth_convergence}. By the smooth convergence on $\mathcal{R}$, it is obviously sufficient to show that
\begin{align*}
    M^i\times\{-1\}\supset \psi_i\left( U_i\cap B_{\fg}(x_0,A+C(n))\right)\supset \cR^{(i)}(r,A)\times\{-1\}
\end{align*}
for all $i$ large enough, depending on $r$ and $A$.

    For each $i$, we apply a covering argument to find a finite sequence of points $\left\{x^{(i)}_k\right\}_{k=1}^{N_i}$ in $\cR^{(i)}(r,A)$ such that 
    \begin{align*}
        \cR^{(i)}(r,A)\subset \bigcup_{k=1}^{N_i} B_{g^i_{-1}}\left(x_k^{(i)},r/3\right),\,\,N_i\le \overline{N}(n,A,r).
    \end{align*}
This is possible since for each $x\in \cR^{(i)}(r,A)$, we have $|B_{g^i_{-1}}(x,r/6)|_{g^i_{-1}}\ge c(Y)r^n$ by \cite[Theorem 6.1]{Bam20a}, and $|\cR^{(i)}(r,A)|_{g^i_{-1}}\le |B_{g^i_{-1}}(z_i,A)|_{g^i_{-1}}\le C(Y)A^n$ by \cite[Theorem 8.1]{Bam20a}. After passing to a (not relabeled) subsequence, we may assume that $N_i\equiv N \le \overline{N}(n,A,r)$ for all $i$. Next, we show that, for each $k\in\{1,2,\hdots,N\}$, the sequence $\left\{x_k^{(i)}\right\}_{i=1}^{\infty}$ converges, after passing to a subsequence, to some point in $\cR_{X}$. 
    \\

\noindent\textbf{Claim. } \emph{
For each $k\in\{1,2,\hdots,N\}$, there is a point $\tilde{x}_k\in B_{\fg}\lt(x_0,A+C(n)\rt)\subset\mathcal{R}_X=\mathcal{R}_{-1}$, such that, after passing to a subsequence,}   
\begin{align}\label{eq:points converging smooth}
    \psi_i^{-1}\left(x^{(i)}_k,-1\right)\xrightarrow{i\to\infty} \tilde{x}_k.
\end{align}
\emph{Furthermore, $P\left(\tilde x_k;\frac{1}{2}r\right)$ is unscathed and}
\begin{align}\label{eq:cube converging smooth}
   \psi_i\lt( P\lt(\tilde x_k;r/2 \rt)\rt)\supset P\left( x^{(i)}_k,-1;r/3\right)\quad\textit{for $i$ large enough}.
\end{align}

\begin{proof}[Proof of the claim]
    For the sake of notational simplicity, we shall denote $x^{(i)}_k$ by $x^{(i)}$ and $\tilde{x}_k$ by $\tilde{x}$, and suppress the subindex.  Let $0<r'\ll r$ be a small number to be fixed, and set $t'=-1-r'^2.$ Since $x^{(i)}\in B_{g^i_{-1}}(z_i,A)$, we have that
    \begin{align*}
        d^{g^i_{-1}}_{W_1}\left(\delta_{x^{(i)}}, \nu^i_{x_i,0\,;\,-1}\right) 
        &\le d^{g^i_{-1}}_{W_1}\left(\delta_{x^{(i)}}, \delta_{z_i}\right) + d^{g^i_{-1}}_{W_1}\left(\delta_{z_i}, \nu^i_{x_i,0\,;\,-1}\right)  \\
        &\le A+\sqrt{H_n},
    \end{align*}
    Thus, by \cite[Theorem 6.20]{Bam23}, we may find a conjugate heat flow $(\tilde{\nu}_t)_{t<-1}$ on the metric soliton $(\mathcal{X},\nu_t)$, such that
    \begin{equation*}
        \left(\nu^i_{x^{(i)},-1\,;\,t}\right)_{t\le -1}\xrightarrow[i\to\infty]{\fC} (\tilde{\nu}_t)_{t<-1},\qquad \lim_{t\nearrow -1}\Var(\tilde{\nu}_t)=0,
    \end{equation*}
   where $\mathfrak{C}$ is the correspondence in \eqref{def-limiting-condition}. Since the limit metric flow pair is also $H_n$-concentrated \cite[Theorem 7.4]{Bam23}, by \cite[Definition 2.6, Proposition 3.23]{Bam23}, we have
    \begin{align*}
        \iint_{\mathcal{X}_t\times\mathcal{X}_t}d_t^2(x,y)\,d\tilde\nu_t(x)d\tilde\nu_t(y) =\Var(\tilde\nu_t)\le H_n|t+1|\quad \text{ for all }\quad t<-1
    \end{align*}
    In particular, we can find a $y'\in\mathcal{X}_{t'}$, such that
    \begin{align*}
        \Var\lt(\delta_{y'},\tilde{\nu}_{t'}\rt)=\int_{\mathcal{X}_{t'}}d_{t'}^2(x,y')\,d\tilde\nu_{t'}(x) \le \iint_{\mathcal{X}_{t'}\times\mathcal{X}_{t'}}d_{t'}^2(x,y)\,d\tilde\nu_{t'}(x)d\tilde\nu_{t'}(y) \le H_n|t'+1|.
    \end{align*}
    Since $\mathcal{R}_{t'}$ is dense in $\mathcal{X}_{t'}$, we may without loss of generality assume $y'\in \mathcal{R}_{t'}$, the error caused by which can be made as small as we like. We then have, by \cite[Lemma 2.8]{Bam23} and $t'=-1-r'^2$,
    \begin{equation}\label{ineqHn12}
        d_{W_1}^{\mathcal{X}_{t'}}(\delta_{y'},\tilde\nu_{t'})\le\sqrt{\Var(\delta_{y'},\tilde\nu_{t'})}\le C(n)r'.
    \end{equation}
Since $y'\in\mathcal{R}_{t'}\subset \mathcal{R}$, for all $i$ large enough, we may set
    $$(y'_i,t'):=\psi_i(y')\in M^i\times\{t'\},$$
    according to Theorem \ref{Thm_smooth_convergence}, where $\psi_i$ is the diffeomorphism in \eqref{eq:limiting diffeo}.
    
    Next, we estimate the space-time distance between $\left(x^{(i)},-1\right)$ and $(y'_i,t').$ Since the convergence \eqref{def-limiting-condition} is time-wise at $t'$, we have (see \cite[Definition 6.7]{Bam23})
    \begin{equation}\label{ineqdW1Z1}
        d_{W_1}^{Z_{t'}}\left((\varphi_{t'}^i)_*\nu_{x^{(i)},-1;t'}^i,(\varphi_{t'})_*\tilde\nu_{t'}\right)\to0.
    \end{equation}
$\fC=(Z_t,\varphi^i_t,\varphi_t)$ is the correspondence, $\varphi_{t}^i:(M^i,g^i_t)\to (Z_t,d_t^Z)$ and $\varphi_t:(\cX_t,d_t)\to (Z_t,d^Z_t)$ are isometric embeddings. Putting (\ref{ineqHn12}) into $\fC$ yields
    \begin{equation}\label{ineqdW1Z2}
        d_{W_1}^{Z_{t'}}\left( (\varphi_{t'})_*\delta_{y'} ,(\varphi_{t'})_*\tilde\nu_{t'} \right)\le C(n)r'.
    \end{equation}
  The definition of $(y_i',t')$ implies that (see Theorem \ref{Thm_smooth_convergence} (c))
    \begin{equation}\label{ineqdW1Z3}
        d_{W_1}^{Z_{t'}}\left((\varphi_{t'}^i)_*\delta_{y_i'},(\varphi_{t'})_*\delta_{y'}\right)\to0.
    \end{equation}
    By (\ref{ineqdW1Z1})-(\ref{ineqdW1Z3}) and the triangle inequality, we have
        $d_{W_1}^{Z_{t'}}\lt( (\varphi_{t'}^i)_*\nu_{x^{(i)},-1;t'}^i,  (\varphi_{t'}^i)_*\delta_{y_i'}  \rt) \le C(n)r'$ for all $i$ large enough. This is then equivalent to saying that for $i$ large enough,
    \begin{equation}\label{ineq not spacetime}
        d_{W_1}^{g^i_{t'}}\lt(\nu^i_{x^{(i)},-1;t'},\delta_{y_i'}\rt)\le C(n)r'.
    \end{equation}

    By the assumption $x^{(i)}\in\cR^{(i)}\lt(r,A\rt)$,  we have $r_{\Rm}\lt(x^{(i)},-1\rt)\ge r$. Since $r'\ll r$, we certainly have  $|\Ric|\le r'^{-2}$ on $P\lt(x^{(i)},-1;r'\rt)$. Thus, by \cite[Proposition 9.5]{Bam20a},
    $d_{W_1}^{g^i_{t'}}\lt(\nu^i_{x^{(i)},-1;t'},\delta_{x^{(i)}}\rt)\le C(n)r'$. 
    In combination with (\ref{ineq not spacetime}), we have
    \begin{align*}
       d_{g^i_{t'}}\lt(x^{(i)},y_i'\rt)= d_{W_1}^{g^i_{t'}}\lt(\delta_{x^{(i)}},\delta_{y_i'}\rt)\le d_{W_1}^{g^i_{t'}}\lt(\nu^i_{x^{(i)},-1;t'},\delta_{x^{(i)}}\rt)+d_{W_1}^{g^i_{t'}}\lt(\nu^i_{x^{(i)},-1;t'},\delta_{y_i'}\rt)      \le  C(n)r'.
    \end{align*}
    A standard distance distortion estimate implies that
    \begin{align*}
       d_{g^i_{-1}}\lt(x^{(i)},y_i'\rt)\le C(n)r'.
    \end{align*}
     Thus, we have $(y_i',t')\in P\left(x^{(i)},-1;C(n)r'\right).$ 

    Now we fix $r'=r'(r)\ll r$ to ensure that, via the standard distance distortion estimate, 
    \begin{align*}
        r_{\Rm}\lt(y_i',t'\rt)\ge 0.99r\quad \text{ and }\quad (x^{(i)},-1)\in P\left(y_i',t';0.01r\right).
    \end{align*}
    By Theorem \ref{Thm_smooth_convergence}, the parabolic cube $P\left(y';0.98r\right)\Subset\mathcal{R}$ is unscathed. Note that no world-line in the smooth part of a metric soliton can become extinct in time, since the world-lines are simply integral curves of the complete field $\nabla f_0$. By the smooth convergence on $P\left(y';0.98r\right)$, we then have
    \begin{align*}
        \psi_i^{-1}\left(x^{(i)},-1\right)\in P\left(y';0.98r\right)\quad \text{ for all }\ i\ \text{ large enough},
    \end{align*}
    and then, after passing to a sequence, we have
    \begin{align*}
        \psi^{-1}_i\left(x^{(i)},-1\right)\to \tilde{x}\in P\left(y';0.02r\right)\cap\mathcal{R}_{-1}.
    \end{align*}
    If we take $r'$ to be even smaller if necessary (depending only on $n$ and $r$), then the standard distance distortion estimate implies that
    $$P\left(\tilde x;r/2\right)\subset P\left(y';0.98r\right),$$
    and \eqref{eq:points converging smooth}\eqref{eq:cube converging smooth} follow easily by the smooth convergence on $P\left(\tilde x;r/2\right)\Subset \mathcal{R}_{-1}$.

To see $\tilde{x}\in B_{\fg}\lt(x_0,A+C(n)\rt)$, we compute
\begin{align*}
    d_{\fg}(x_0,\tilde{x})&\le \ d^Z_{-1} \lt(\varphi_{-1}(\tilde{x}),\varphi^i_{-1}(x^{(i)})\rt)
    +d_{g^i_{-1}}\lt(x^{(i)},z_i\rt) + d^Z_{-1}\lt(\varphi_{-1}(x_0),\varphi^i_{-1}(z_i)\rt)
    \\
    &= d^{Z}_{-1}\lt( \varphi_{-1} \lt(\tilde{x} \rt),\varphi^i_{-1}(x^{(i)})\rt) + d_{g^i_{-1}}\lt(x^{(i)},z_i\rt)+d^{\fg}_{W_1}\left(\delta_{x_0},\nu_{-1}\right)\\
    &\quad + d^{Z_{-1}}_{W_1}\left((\varphi_{-1})_*(\nu_{-1}),(\varphi^i_{-1})_* \lt (\nu^i_{x_i,0\,;\,-1} \rt)\right)+ d^{g^i_{-1}}_{W_1}\left(\nu^i_{x_i,0\,;\,-1},\dt_{z_i}\right).
\end{align*}
Since the convergence \eqref{def-limiting-condition} is time-wise at $t=-1$, taking a limit at the right-hand-side of the above inequality, we conclude from Theorem \ref{Thm_smooth_convergence} (c) that
$$d_{\fg}(x_0,\tilde x)\le C(n)+A.$$
\end{proof}


We continue with the proof of the lemma. Applying the claim successively to $\{x^{(i)}_k\}_{i=1}^\infty$ for $k=1,2,\hdots,N$, we find $\tilde x_k\in B_{\fg}(x_0,A+C(n))\cap\mathcal{R}_X$, $k=1,2,\hdots,N$, such that after passing to subsequences for finitely many times, \eqref{eq:points converging smooth} and \eqref{eq:cube converging smooth} both hold. Thus, if $r<1$ and $A\ge \underline{A}(n)$, we have
\begin{align*}
       K:=\left( \bigcup_{k=1}^N P\left(\tilde{x}_k;r/2\right)\right)\cap\mathcal{R}_{-1}\Subset B_{\fg}(x_0,A+1+C(n))\cap\cR_X,
\end{align*}
for all $i$ large enough. Since 
\begin{align*}
    \psi_i \left( B_{\fg} \lt(x_0,A+C(n)\rt)\cap\cR_X\right)&\supset
    \psi_i(K)\supset \left(\bigcup_{k=1}^NP\left(x^{(i)}_k,-1;r/3\right)\right)\cap M\times\{-1\} \\
    &= \bigcup_{k=1}^NB_{g^i_{-1}}\left(x^{(i)}_k,r/3\right)\supset\mathcal{R}^{(i)}(r,A), 
\end{align*}
the conclusion follows from the smooth convergence on $K\Subset\mathcal{R}_{-1}$.  
\end{proof}

\subsection{A distance distortion estimate}

We show a distance distortion estimate on Ricci flows with a type-I scalar curvature bound. The proof of the following Lemma is similar to \cite[Proposition 2.21]{FL25}.
\begin{Lemma}\label{lem distance distortion}
    Let $\lt(M^n,g_t\rt)_{t\in I}$ be a Ricci flow with bounded curvature within each compact time-interval and suppose that the following hold:
    \begin{enumerate}[(a)]
        \item $x,y\in M$, and $[t_0-T,t_0]\subset I$.
        \item $R_{\min}\le R\le\dfrac{R_0}{t_0-t}$ on $\{x,y\}\times[t_0-T,t_0]$.
        \item $\cN_{x,t_0}(\tau)$, $\cN_{y,t_0}(\tau)\ge-Y$ for all $\tau\in[0,T]$.
    \end{enumerate}
Then for all $t\in[t_0-T,t_0]$, we have
    \begin{align*}
        d_t(x,y)\le d_{t_0}(x,y)+C(n,Y,R_0,R_{\min})|t_0-t|^{1/2}.
    \end{align*}
\end{Lemma}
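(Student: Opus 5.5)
The plan is to control the backward growth of $d_t(x,y)$ through the $W_1$-distance between conjugate heat kernels, since that quantity is monotone, and to use the type-I scalar curvature bound only to keep each kernel concentrated near its own base point. This follows \cite[Proposition 2.21]{FL25}.

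First, for $t\in[t_0-T,t_0]$, the Wasserstein distance between the conjugate heat kernels based at $(x,t_0)$ and $(y,t_0)$ is nondecreasing in $t$ (Bamler's monotonicity \cite{Bam20a}). Hence
\begin{align*}
d_{W_1}^{g_t}\lt(\nu_{x,t_0;t},\nu_{y,t_0;t}\rt)\le d_{W_1}^{g_{t_0}}(\delta_x,\delta_y)=d_{t_0}(x,y).
\end{align*}
By the triangle inequality,
\begin{align*}
d_t(x,y)\le d_{t_0}(x,y)+d_{W_1}^{g_t}(\delta_x,\nu_{x,t_0;t})+d_{W_1}^{g_t}(\delta_y,\nu_{y,t_0;t}).
\end{align*}
So it suffices to show that the base point of each conjugate heat kernel stays within distance $C(n,Y,R_0,R_{\min})\sqrt{t_0-t}$ of an $H_n$-center.

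Second, let $(z_t,t)$ be an $H_n$-center of $(x,t_0)$. Then $d_{W_1}^{g_t}(\delta_{z_t},\nu_{x,t_0;t})\le\sqrt{H_n(t_0-t)}$, and it remains to bound $d_t(x,z_t)$ by $C\sqrt{t_0-t}$. Here I would use the scalar curvature bound along the worldline $\{x\}\times[t_0-T,t_0]$, following Bamler's argument \cite[Proposition 9.5]{Bam20a}.

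\begin{enumerate}[(1)]
\item Set $\tau=t_0-t$. The upper bound $R\le R_0/\tau$ on the worldline, together with $R\ge R_{\min}$, bounds $\int_t^{t_0}R(x,s)\,ds$ by $R_0\log$-type terms. The cleaner route is to bound the heat kernel from below on the worldline: Bamler's lower bound for $K(x,t_0;x,t)$ in terms of $\int R$ along the worldline and the Nash entropy gives $K(x,t_0;x,t)\ge c(n,Y,R_0,R_{\min})\tau^{-n/2}$.
\item Combine this with the Gaussian upper bound centered at the $H_n$-center, $K(x,t_0;\cdot,t)\le C(Y)\tau^{-n/2}\exp\lt(-d_t^2(z_t,\cdot)/(C\tau)\rt)$ \cite{Bam20a}. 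Evaluating at $x$ gives $d_t(x,z_t)^2\le C\tau$.
\end{enumerate}

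The same argument applies at $y$, which finishes the proof.

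The main obstacle is step (1), the on-diagonal lower bound. The type-I bound $R\le R_0/\tau$ is not integrable at $\tau=0$, so the naive estimate of the form $\exp\lt(-\int R\rt)$ fails. I would try Perelman's reduced distance along the static curve $\gamma(s)\equiv x$. Its $\mathcal{L}$-length from $(x,t_0)$ to $(x,t)$ is $\int_0^{\tau}\sqrt{\sigma}R\,d\sigma\le 2R_0\sqrt{\tau}$, so $\ell(x,t)\le R_0$. Perelman's bound $K\ge(4\pi\tau)^{-n/2}e^{-\ell}$ then gives the lower bound with constant $c(n,R_0)$; this is where the $\sqrt{\sigma}$ weight cures the non-integrability. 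The lower bound $R_{\min}$ controls the negative part of $R$ in that integral. The entropy bound $Y$ enters only through the Gaussian upper bound in step (2).
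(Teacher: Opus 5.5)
Your proposal is correct and follows essentially the same route as the paper: monotonicity of $W_1$ plus the triangle inequality through $H_n$-centers. You then bound $d_t(x,z)\le C\sqrt{t_0-t}$ by combining Perelman's $f\le\ell\le R_0$ (from the $\mathcal{L}$-length of the static curve) with Bamler's Gaussian upper bound centered at the $H_n$-center \cite[Theorem 7.2]{Bam20a}. One bookkeeping correction: $R_{\min}$ is not needed for the $\mathcal{L}$-length bound, since the upper bound on $R$ suffices there; instead, $R_{\min}$ enters through the constant in Bamler's Gaussian upper bound, which depends on $R_{\min}(t_0-t)$ as well as on the entropy lower bound.
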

\begin{proof}
    We may without loss of generality assume that $t_0=0$. Consider the potential function of the heat kernel  and Perelman's reduced length $\ell$ based at $(x,0)$, and we apply \cite[Corollary 9.5]{Per02} to obtain 
    \begin{align*}
        f(x,t)\le \ell(x,t) \le \frac{1}{2\sqrt{|t|}}\int_0^{|t|} \sqrt{\tau}R(x,-\tau)\,d\tau \le R_0, \,\,\text{for all}\,\,t\in[-T,0].
    \end{align*}
    Let $(z,t)$ be an $H_n$-center of $(x,0)$. We apply \cite[Theorem 7.2]{Bam20a} to the heat kernel $K(x,0;x,t)$, and this implies
    \begin{align}\label{eq:Hn-center drifting}
        d_t(x,z)&\le 3\lt(\ell(x,t)+C(n,R_{\min})-\cN_{x,0}\lt(|t|\rt)\rt)^{1/2}|t|^{1/2}\le C(n,Y,R_0,R_{\min})|t|^{1/2}.
    \end{align}
    On the other hand, by the definition of $H_n$-center, we have that
    \begin{align*}
        d_{W_1}^{g_t}(\nu_{x,0;t},\delta_z)\le\sqrt{H_n|t|}.
    \end{align*}
    Let $(z',t)$ be an $H_n$-center of $(y,0)$. A similar discussion yields
    \begin{align*}
        d_t(z',y)\le C(n,Y,R_0,R_{\min})|t|^{1/2},\,\,d_{W_1}^{g_t}(\nu_{y,0;t},\dt_{z'})\le\sqrt{H_n|t|}.
    \end{align*}
    Combining the above inequalities, and by the monotonicity of the $W_1$-Wasserstein distance, we finish the proof by computing
    \begin{align*}
        d_t(x,y)&\le d_t(x,z)+d_t(z,z')+d_t(z',y)\\
        &\le2C(n,Y,R_0,R_{\min})|t|^{1/2}+d_{W_1}^{g_{t}}(\nu_{x,0;t},\nu_{y,0;t})+d_{W_1}^{g_t}(\nu_{x,0;t},\dt_{z})+d_{W_1}^{g_t}(\nu_{y,0;t},\dt_{z'})\\
        &\le2C(n,Y,R_0,R_{\min})|t|^{1/2}+d_0(x,y)+2H_n^{1/2}|t|^{1/2}\\
        &\le C(n,Y,R_0,R_{\min})|t|^{1/2}+d_0(x,y).
    \end{align*}
\end{proof}

\subsection{An $\ep$-regularity theorem on volume ratio}
In this subsection, we prove Theorem \ref{thm main2}.

\begin{proof}[Proof of Theorem \ref{thm main2}]
    After parabolic rescaling and  time-shift, we may assume $r=3$ and $t_0=0$. We prove by contradiction. Assume that there exist a sequence of Ricci flows $\lt(M^i,g^i_t,x_i\rt)_{-9\le t\le0}$ and a sequence $\dt_i\searrow 0$  such that:
    \begin{enumerate}[(a)]
\item For all $s\in(0,3]$, we have $\lt|B_{g^i_0}(x_i,s)\rt|_{g^i_0}\ge\omega_ns^n(1-\dt_i)$;
\item $R^i\le\dfrac{R_0}{|t|}\,\,\text{on}\,\,B_{g^i_0}\lt(x_i,3\rt)\times[-9,0]$;
\item $ r^i_{\Rm}\lt(x_i,0\rt)<3\dt_i$;
\item $R^i\ge -\frac{n}{2}$ on $M^i\times[-8,0]$ by the standard maximum principle for the scalar curvature.
        \end{enumerate}

Without loss of generality, we may assume $\dt_i\le0.99$.   Using \cite[Corollary 5.11, Theorem 8.1]{Bam20a} and assumptions (a)(d), we get a uniform lower entropy bound depending only on the dimension
\begin{align}\label{ineq blowup N>}
    \inf_{x\in B_{g^i_0}(x_i,2)}\cN_{x,0}(\tau)\ge-C(n),\,\,\text{for all}\,\,\tau\in(0,4).
\end{align}
Next, set a sequence of factors 
$$\lambda_i:=\dt_i^{|\log\dt_i|^{-1/2}},$$
then it is clear that if $i\to+\infty$,
\begin{align}\label{ineq lambda dt converge}
    \lambda_i\to0,\,\,\text{and}\,\, \log_{\lambda_i}\dt_i\to+\infty.
\end{align}
By \eqref{ineq blowup N>}, we have an entropy gap 
\begin{align*}
    0\le \sum_{k=1}^{\dt_i^2\le\lambda_i^k} \lt( \cN_{x_i,0} \lt(4\lambda_i^{k+1} \rt)-\cN_{x_i,0} \lt(4\lambda_i^{k} \rt) \rt)\le C(n).
\end{align*}        
This implies we can find a $k_i\in \lt(0,2\log_{\lambda_i}\dt_i \rt]\cap\mathbb{N}$ such that the Nash entropy could be almost constant in combination with \eqref{ineq lambda dt converge}, namely,
\begin{align}\label{ineq cN gap}
    0\le \cN_{x_i,0}\lt(4\lambda_i^{k_i+1}\rt)-\cN_{x_i,0}\lt(4\lambda_i^{k_i}\rt)\le\frac{C(n)}{ \lt[2\log_{\lambda_i}\dt_i \rt]}\to0.
\end{align}

On the other hand, by assumption (b), we apply Lemma \ref{lem distance distortion} to deduce that for all $s\in(0,2]$ and $t\in[-4,0]$,
\begin{align*}
    B_{g^i_0}(x_i,s)\subset B_{g^i_t}\lt(x_i,C_d|t|^{1/2}+s\rt),
\end{align*}
where $C_d$ is a constant depending on $n$ and $R_0$, which may vary from line to line for at most finitely many times. In particular, $C_d$ is independent of $i$. The lower bound of the scalar curvature (assumption (d)) yields
\begin{align*}
    \frac{d}{dt}\lt|B_{g^i_0}(x_i,s)\rt|_{g^i_t}\le C(n) \lt|B_{g^i_0}(x_i,s) \rt|_{g^i_t}.
\end{align*}
Combining the two inequalities above, we have for all $s\in(0,2]$ and $t\in[-4,0]$,
\begin{align}\label{ineq volume>>}
    \lt|B_{g^i_t}\lt(x_i,s+C_d|t|^{1/2}\rt)\rt|_{g^i_t}\ge \lt|B_{g^i_0}(x_i,s)\rt|_{g^i_t}\ge e^{-C(n)|t|}\lt|B_{g^i_0}(x_i,s)\rt|_{g^i_0}\ge e^{-C(n)|t|}s^n\omega_n(1-\dt_i).
\end{align}

Now we set a sequence of blow-up factors
\begin{align*}
    r_i^2=\lambda_i^{k_i}\ge\dt_i^2
\end{align*}
and rescale each $\lt(M^i,g^i_t\rt)_{t\in[-4,0]}$ to get a sequence of Ricci flows:
\begin{align*}
   \lt(M^i,\tilde{g}^i_t\rt)_{t\in\lt[-4r_i^{-2},0\rt]},\,\, \tilde{g}^i_t=r_i^{-2}g^i\lt(r_i^2t\rt).
\end{align*}
Then for all $r\in \lt(C_d|t|^{1/2},2+C_d|t|^{1/2} \rt]$, and $t\in[-4,0]$, \eqref{ineq volume>>} becomes
\begin{align*}
    \frac{\lt|B_{\tilde{g}^i(r_i^{-2}t)} \lt(x_i,r_i^{-1}r\rt)\rt|_{\tilde{g}^i\lt(r_i^{-2}t\rt)}}{\omega_n \lt(r_i^{-1}r \rt)^n}\ge e^{-C(n)|t|}(1-\dt_i)\lt( \frac{r-C_d|t|^{1/2}}{r}\rt)^n.
\end{align*}
Set $t=-r_i^{2}$, we have for each $i$ and $A:=r_i^{-1}r\in \lt(C_d,C_d+2r_i^{-1}\rt]$,
\begin{align}\label{ineq rescaled AVR gap}
    \lt|B_{\tilde{g}^i_{-1}}(x_i,A)\rt|_{\tilde{g}^i_{-1}}\ge\omega_n e^{-C(n)r_i^2}(1-\dt_i)\lt(A-C_d\rt)^n.
\end{align}

Again \eqref{ineq cN gap} becomes 
\begin{align*}
    0\le \tilde{\cN}_{x_i,0}(4\lambda_i)-\tilde{\cN}_{x_i,0}(4)\le\frac{C(n)}{[2\log_{\lambda_i}\dt_i]}\to0.
\end{align*}
Hence, we apply Bamler's $\bF$-compactness theorem to the rescaled flows $\lt(M^i,\tilde{g}^i_t \rt)_{t\in\lt[-4r_i^{-2},0 \rt]}$ to get a metric soliton, namely,
\begin{align*}
    \lt( \lt(M^i,\tilde{g}^i_t \rt)_{t\in(-4,0)} ,\lt(\tilde{\nu}^i_{x_i,0;t}\rt)_{t\in(-4,0)} \rt)\xrightarrow[i\to\infty]{\mathbb{F},\mathfrak{C}}\lt(\mathcal{X},\lt(\nu_t\rt)_{t\in(-4,0)}  \rt).
\end{align*}
The $-1$ slice of $\lt(\mathcal{X},\lt(\nu_t\rt)_{t\in(-4,0)}  \rt)$ is the model of the limit metric soliton with regular part $\lt(\cR_X,\fg,f_0 \rt)$ and a center $x_0$. Let $\lt(z_i,-1 \rt)$ be an $H_n$-center of $(x_i,0)$ with respect to the flow $\tilde{g}^i_t$. Then similar to the proof of \eqref{eq:Hn-center drifting}, we get
\begin{align*}
    d_{\tilde{g}^i_{-1}}(x_i,z_i)\le C_d\quad \text{ for all }\quad i.
\end{align*}
Thus, for each $i$ and $A\in \lt(2C_d,2C_d+2r_i^{-1} \rt]$, \eqref{ineq rescaled AVR gap} becomes
\begin{align}\label{ineq rescaled AVR gap'}
    \lt|B_{\tilde{g}^i_{-1}}\lt(z_i,A\rt)\rt|_{\tilde{g}^i_{-1}}\ge \omega_n e^{-C(n)r_i^2}\lt(A-2C_d\rt)^n(1-\dt_i).
\end{align}

Applying Lemma \ref{Lm: volume of the singular part} and Lemma \ref{Lm:smooth volume convergence} yields that for $A\in[\underline{A}(n),+\infty)$ and $r\le\bar{r}(A,n)$,
\begin{align*}
    \lt|B_{\tilde{g}^i_{-1}}(z_i,A)\rt|_{\tilde{g}^i_{-1}}\le \lt|B_{\fg}\lt(x_0,A+C(n)\rt)\cap\cR_X \rt|+C(A,n)r^{\alpha}+\Psi(i^{-1}\,|\,n,A,r).
\end{align*}
Letting $i\to\infty$ and then $r\to0$,  we get from the above inequality together with \eqref{ineq rescaled AVR gap'} that
\begin{align*}
    \frac{\lt|B_{\fg}\lt(x_0,A\rt)\cap\cR_X \rt|}{\omega_n A^n}\ge \left(\frac{A-C(n)-2C_d}{A}\right)^n\quad \text{ for all }\quad A\ge \underline{A}(R_0,n).
\end{align*}
Now, if we take $A\ge\delta(n)^{-1}$ to be large enough, such that the right-hand-side of the above equation is greater than $1-\delta(n)$, where $\delta(n)$ is the constant in Theorem \ref{thm-AVR gap corollary}, then we immediately get that the limit metric soliton is actually Gaussian, which means $\tilde{r}_{\Rm}^i(x_i,-1)$ tends to infinity as $i\to\infty$. However, this contradicts assumption (c) that $\displaystyle \tilde{r}^i_{\Rm}\lt(x_i,0\rt)<3\dt_i/r_i\le3$ by Perelman's Pseudolocality Theorem \cite[Theorem 10.3]{Per02}.

\end{proof}

\subsection{An AVR gap theorem for ancient Ricci flows}
In this subsection we prove Theorem \ref{thm main}. The main difference from the proof of Theorem \ref{thm main2} is that we apply a blow-down technique to get a metric soliton.
\begin{proof}[Proof of Theorem \ref{thm main}]
Suppose $\lt(M^n,g_t\rt)_{t\in \lt(-\infty,0 \rt]}$ is an ancient Ricci flow with bounded curvature on compact time-intervals and type-I scalar curvature. Assume for some fixed base point $y_0\in M$ and some $\dt\in(0,1)$ to be determined later, we have
\begin{align}\label{ineq noncollapse}
   \limsup_{A\to+\infty}\frac{\lt|B_{g_0} \lt(y_0,A\rt)\rt|_{g_0}}{\omega_nA^n}\ge1-\dt. 
\end{align}
We first construct a sequence of blow-down factors to get an asymptotic volume gap of the $-1$ slice of the rescaled flow. Compared with the proof in the previous subsection, this case is much simpler because the scalar curvature is nonnegative due to \cite{Che09}.\\

\noindent\textbf{Claim.} \emph{For any $A\ge\underline{A}(n,\dt,R_0)\gg1$, we can find a sequence of blow-down factors $\tau_i\nearrow+\infty$, depending also on $A$, such that the rescaled flow $\lt(M,g^i_t:=\tau_i^{-1}g\lt(\tau_it\rt) \rt)_{t\le0}$ satisfies} 
\begin{equation*}
    \frac{\lt|B_{g^i_{-1}} \lt(y_0, A \rt)\rt|_{g^i_{-1}}}{\omega_n A ^n}\ge 1-3\dt,\,\,\text{for $i$ large enough}.
\end{equation*}

\begin{proof}[Proof of the claim]
 We may without loss of generality assume that $\dt\le0.99$. Then \cite[Corollary 5.11, Theorem 8.1]{Bam20a} and \eqref{ineq noncollapse} imply a uniform lower entropy bound
\begin{align}\label{ineq N>-Y}
    \inf_{\tau>0} \cN_{x,t}(\tau)\ge-Y_0, \,\,\text{for all}\,\,(x,t)\in M\times(-\infty,0],
\end{align}
where $Y_0$ is only a dimensional constant. In view of \eqref{ineq noncollapse},  we can find an increasing $\{\lambda_i\}$ with $\lambda_i\nearrow+\infty$ such that
\begin{equation}\label{ineq almost avr to 1}
    \frac{\lt|B_{g_0}\lt(y_0,\lambda_i^{1/2}\rt)\rt|_{g_0}}{\omega_n\lambda_i^{n/2}}\ge1-2\delta\quad\text{ for all }\ i.
\end{equation}

By \eqref{ineq N>-Y} and the type-I curvature assumption, we may apply Lemma \ref{lem distance distortion} to get
\begin{equation*}
    d_{g_{-\tau}}(x,y)\le d_{g_0}(x,y)+C_d\sqrt{\tau}\quad \text{ for all }\ x, y\in M,
\end{equation*}
where $C_d$, as before, is a constant depending on $n$ and $R_0$, which may vary from line to line. Choose a sequence of blow-down factors $\tau_i=\sigma\lambda_i$, where $\sigma<1$ is a fixed number to be determined later. 
Then the distance distortion estimate implies
\begin{equation*}
    B_{g_0}\lt(y_0,\lambda_i^{1/2}\rt)\subset B_{g_{-\tau_i}}\lt(y_0,\lambda_i^{1/2}\lt(1+C_d\sigma^{1/2}\rt)\rt).
\end{equation*}
Set $g^i_t:=\tau_i^{-1}g(\tau_it),t\le0$. 
Then by \eqref{ineq almost avr to 1}, we have,
\begin{align*}
    \frac{\lt|B_{g^i_{-1}} \lt( y_0,C_d+\sigma^{-1/2} \rt)\rt|_{g^i_{-1}}}{\omega_n \lt( C_d+\sigma^{-1/2} \rt)^n}&=\frac{\lt|B_{g_{-\tau_i}} \lt( y_0,\lambda_i^{1/2} \lt(1+C_d\sigma^{1/2} \rt) \rt)\rt|_{g_{-\tau_i}}}{\omega_n \lt( \lambda_i^{1/2} \lt(1+C_d\sigma^{1/2} \rt) \rt)^n }\\
    &\ge \frac{ \lt| B_{g_0}\lt(y_0,\lambda_i^{1/2}\rt) \rt|_{g_0}}{\omega_n \lambda_i^{n/2} }\frac{1}{\lt(1+C_d\sigma^{1/2}\rt)^n}\\
    &\ge\frac{1-2\delta}{\lt(1+C_d\sigma^{1/2}\rt)^n}=(1-2\dt) \lt(1- \frac{C_d}{C_d+\sigma^{-1/2}} \rt)^n,
\end{align*}
while the first inequality comes from the fact that the volume form is decreasing since $R\ge0$ \cite{Che09}.
Letting $C_d+\sigma^{-1/2}=A$, if $A\ge\underline{A}(n,R_0,\delta)$, then the right-hand-side of the equation is greater than $1-3\delta$; the proof is done. 

\end{proof}
Let $A\ge\underline{A}(n,R_0,\delta)$ be a constant to be determined later, and let $\tau_i$ be the scaling factors in the above claim, determined according to $A$. Define $g^i_t:=\tau_i^{-1}g_{\tau_it}$. Then, after passing to a subsequence, we have 
\begin{align*}
    \left( \lt(M,g^i_t\rt)_{t\in(-\infty,0]}, \lt(\nu^i_{y_0,0;t}\rt)_{t\in(-\infty,0]}\right)\xrightarrow[i\to\infty]{\mathbb{F},\mathfrak{C}}\lt(\mathcal{X},\lt(\nu_t\rt)_{t\in(-\infty,0)}\rt),
\end{align*}
where $\lt(\cX,\lt(\nu_t\rt)_{t\in(-\infty,0)}\rt)$ is a noncollapsed $\bF$-limit metric soliton called a tangent flow of $(M,g_t)_{t\le 0}$ at infinity. Suppose the $-1$ slice of $\cX$, also known as the model $(X,d,\nu)$ of the metric soliton , has regular part $(\cR,\fg,f_0)$ and a center $x_0$. 

By Lemma \ref{Lm: volume of the singular part} and Lemma \ref{Lm:smooth volume convergence}, we have for $A'\ge \underline{A'}(n)$,  $r\le\bar{r}(A')$,
\begin{align}\label{eq:volume estimate 112}
    \lt|B_{g^i_{-1}}\lt(z_i,A'\rt)\rt|_{g^i_{-1}}\le C(A')r^{\alpha}+\lt|B_{\fg}\lt(x_0,A'+C(n)\rt)\cap\cR_{X}\rt|+\Psi(i^{-1}\,|\,n,A',r),
\end{align}
where $(z_i,-1)$ is an $H_n$-center of $(y_0,0)$ with respect to the flow $g^i_t$.

On the other hand, by the same argument as \eqref{eq:Hn-center drifting}, we have
\begin{align*}
    d_{g^i_{-1}}(y_0,z_i)\le C_d.
\end{align*}
Thus, the claim implies that
\begin{align}\label{eq:volume estimate 113}
    \lt|B_{g^i_{-1}}\lt(z_i,A+C_d\rt)\rt|_{g^i_{-1}}\ge \omega_n A^n(1-3\delta).
\end{align}
Combining \eqref{eq:volume estimate 112} and \eqref{eq:volume estimate 113}, fixing $A'=A+C_d$, letting $i\to\infty$ first and then $r\to 0$, we get
\begin{align*}
    \lt|B_{\fg}\lt(x_0,A+C_d+C(n)\rt)\cap\cR_{X}\rt|\ge \omega_n A^n(1-3\delta),
\end{align*}
or equivalently
\begin{align*}
      \frac{\lt|B_{\fg}\lt(x_0,A+C_d+C(n)\rt)\cap\cR_{X}\rt|}{\omega_n(A+C_d+C(n))^n}\ge \left(\frac{A}{A+C_d+C(n)}\right)^n(1-3\delta).
\end{align*}
Thus, if we take $\delta=0.1\delta_{\ref{thm-AVR gap corollary}}$, where $\delta_{\ref{thm-AVR gap corollary}}$ is the dimensional constant in Theorem \ref{thm-AVR gap corollary}, and fix $A= A(n,R_0,\delta_{\ref{thm-AVR gap corollary}})\ge \delta_{\ref{thm-AVR gap corollary}}^{-1}$ large enough, such that the right-hand-side of the above equation is greater than $1-\delta_{\ref{thm-AVR gap corollary}}$,  then we obtain that the tangent flow is the Gaussian shrinker. Consequently, the ancient Ricci flow is the static Euclidean space. This finishes the proof.

\end{proof}

\bigskip


\begin{thebibliography}{CCG{\alphalchar{+}}10}










 















\bibitem[Bam20a]{Bam20a}
Bamler, Richard~H. \emph{Entropy and heat kernel bounds on a Ricci flow background}.
arXiv preprint arXiv:2008.07093 (2020).


  
\bibitem[Bam20b]{Bam20b} \bysame, \emph{Structure theory of non-collapsed limits of Ricci flows}. arXiv preprint  arXiv:2009.03243 (2020).


\bibitem[Bam23]{Bam23}\bysame, \emph{Compactness theory of the space of super Ricci flows.} Invent. Math. 233 (2023), no. 3, 1121–1277. 


 










\bibitem[CN09]{CN09} Carrillo, Jos\'e A., and Ni,Lei. \emph{Sharp logarithmic Sobolev inequalities on gradient solitons and applications.} Communications in Analysis and Geometry 17.4 (2009): 721-753.





\bibitem[CMZ25]{CMZ25} Chan, Pak-Yeung; Ma, Zilu; Zhang, Yongjia. \emph{ A local gap theorem for Ricci shrinkers.} Comm. Anal. Geom. 2025, to appear.




\bibitem[CMZ24]{CMZ24} Chan, Pak-Yeung; Ma, Zilu; Zhang, Yongjia. \emph{On noncollapsed $\bF$-limit of metric solitons.} arXiv preprint arXiv:2401.03387 (2023).



\bibitem[Che09]{Che09}Chen, Bing-Long,\emph{Strong uniqueness of the Ricci flow}. J. Differential Geom. 82 (2009), 363--382.











\bibitem[CM25]{CM25} Colding, Tobias Holck;  Minicozzi, William P. \emph{Singularities of Ricci flow and diffeomorphisms.} Publications mathématiques de l'IHÉS 142.1 (2025): 75-152.


 
\bibitem[CZ10]{CZ10}Cao, Huai-Dong; Zhou, De-Tang. \emph{On complete gradient shrinking Ricci solitons.} J. Differential Geom. \textbf{85} (2010), 175--186. 

\bibitem[CZ25]{CZ25}Cheng, Liang; Zhang, Yongjia. \emph{An $\ep$-regularity theorem for Perelman's reduced volume.} arXiv preprint 	arXiv:2502.15219

\bibitem[CZ26]{CZ26}Chan, Pak-Yeung; Zhang, Yongjia. \emph{Rigidity and gap theorems for Ricci shrinkers}  arXiv preprint arXiv: 2605.08884(2026).
 





\bibitem[DWZ26]{DWZ26} Deng, Yuxing; Wang, Ganqi; Zhang, Yongjia. \emph{Ancient Ricci flows with nonnegative Ricci curvature}, arXiv preprint arXiv:2603.28014. 

\bibitem[FL25]{FL25} Fang, Hanbing; Li, Yu. \emph{On the structure of noncollapsed Ricci flow limit spaces}, arXiv preprint arXiv:2510.12398(2025).












\bibitem[HN14]{HN14} Hein, Hans-Joachim; Naber, Aaron. \emph{New logarithmic Sobolev inequalities and an $\ep$-regularity theorem
for the Ricci flow}, Comm. Pure Appl. Math. 67 (2014), no. 9, 1543–1561. MR 3245102




\bibitem[Kot13]{Kot13} Kotschwar, Brett.\emph{A local version of Bando's theorem on the real-analyticity of solutions to the Ricci flow}. Bull. Lond. Math. Soc. 45 (2013), no.1, 153–158.





\bibitem[LW20]{LW20} Li, Yu; Wang, Bing. \emph{Heat kernel on Ricci shrinkers}. Calc. Var. Partial Differential Equations 59 (2020), no. 6, Paper No. 194, 84 pp.


\bibitem[LW24]{LW24} Li, Yu; Wang, Bing. \emph{Rigidity of the round cylinders in Ricci shrinkers}. Journal of Differential Geometry 127.2 (2024): 817-897.

\bibitem[LZ23]{LZ23} Li, Yu; Zhang, Wenjia. \emph{On the rigidity of Ricci shrinkers.} arXiv preprint arXiv:2305.06143 (2023).

\bibitem[LQZ26]{LQZ26} Li, Wenqi; Qu Yuanyuan; Zhu Meng. To appear


\bibitem[M20]{M20} Boris Mityagin, \emph{The Zero Set of a Real Analytic Function}. (Russian) Mat. Zametki 107 (2020), no.3, 473–475; translation in Math. Notes 107 (2020), no.3-4, 529–530.







\bibitem[Per02]{Per02} Perelman, Grisha, \emph{The entropy formula for the Ricci flow and its geometric applications}, arXiv:math.DG/0211159 (2002).



\bibitem[SS72]{SS72} Sou\v{c}ek, Ji\v{r}\'{\i}; Sou\v{c}ek, Vladim\'{\i}r. \emph{Morse-Sard theorem for real-analytic functions}. Commentationes Mathematicae Universitatis Carolinae, Vol. 13 (1972), No. 1, 45-51


\bibitem[WW25]{WW25} Wang, Jie; Wang, Youde. \emph{Rigidity and $\ep$-regularity theorems of Ricci shrinkers}, Calc. Var. 64, 42 (2025). https://doi.org/10.1007/s00526-024-02903-5



\bibitem[Yo12]{Yo12} Yokota, Takumi. \emph{Addendum to `Perelman’s reduced volume and a gap theorem for the Ricci flow'.} Communications in Analysis and Geometry 20.5 (2012): 949-955.


\bibitem[Zhang11]{Zhang11} Zhang, Shijin; \emph{On a sharp volume estimate for gradient Ricci solitons with scalar curvature bounded below,} Acta Math. Sin. (Engl. Ser.) 27(2011), no. 5, 871–882, DOI 10.1007/s10114011-9527-7. MR2786449

\bibitem[Zhang18]{Zhang18} Zhang, Shijin. \emph{A gap theorem on complete shrinking gradient Ricci solitons}. Proc. Amer. Math. Soc. 146 (2018), no. 1, 359–368.

\bibitem[Zhang20]{Zhang20} Zhang, Zhuhong. \emph{A gap theorem of four-dimensional gradient shrinking solitons}. Comm. Anal. Geom. 28 (2020), no. 3, 729–742

\bibitem[Zhang21]{Zhang21} Zhang, Yongjia; \emph{Entropy, noncollapsing, and a gap theorem for ancient solutions to the Ricci flow}, Communications in Analysis and Geometry, DOI:10.4310/
CAG.2021.v29.n2.a8



\end{thebibliography}
\bibliographystyle{amsalpha}

\newcommand{\alphalchar}[1]{$^{#1}$}
\providecommand{\bysame}{\leavevmode\hbox to3em{\hrulefill}\thinspace}
\providecommand{\MR}{\relax\ifhmode\unskip\space\fi MR }
\providecommand{\MRhref}[2]{%
  \href{http://www.ams.org/mathscinet-getitem?mr=#1}{#2}
}

\noindent School of Mathematics and Statistics, Beijing Institute of Technology, Beijing, 100081, China
\\ E-mail address: \verb"ggwgq5986@sina.com"
\\

\noindent School of Mathematical Sciences, Shanghai Jiao Tong University, Shanghai, 200240, China
\\ E-mail address: \verb"sunzhang91@sjtu.edu.cn"

\end{document}